\documentclass[12pt,a4paper]{amsart}

\calclayout

\usepackage{xcolor}
\usepackage[lite]{amsrefs}
\usepackage{amssymb}
\usepackage[all,cmtip]{xy}

\usepackage{mathrsfs}
\usepackage{tabularx}
\usepackage{booktabs}
\usepackage[labelfont=bf,format=plain,justification=raggedright,singlelinecheck=false]{caption}
\usepackage{bookmark}
\usepackage{hyperref}
\usepackage{comment}

\newcommand{\F}{\mathbb{F}}

\newcommand{\X}{\mathcal{X}}

\newcommand{\Fq}{\mathbb{F}_q}
\newcommand{\Fqq}{\mathbb{F}_{q^2}}

\newcommand{\mP}{\mathbb{P}}

\newcommand{\D}{\mathcal{D}}
\newcommand{\eps}{\varepsilon}
\newcommand{\C}{\mathcal{C}}

\renewcommand{\div}{\operatorname{div}}

\numberwithin{equation}{section}

\theoremstyle{plain}
\newtheorem{theorem}[equation]{Theorem}
\newtheorem{corollary}[equation]{Corollary}
\newtheorem{lemma}[equation]{Lemma}
\newtheorem{proposition}[equation]{Proposition}

\theoremstyle{definition}

\theoremstyle{remark}
\newtheorem{remark}[equation]{Remark}

\begin{document}

\title[Uniqueness of a maximal function field with the third largest genus]{Uniqueness of the maximal function field with the third largest genus in characteristic $3$}
\author{Peter Beelen}
\address{Department of Applied Mathematics and Computer Science, Technical University of Denmark, DK-2800, Kongens Lyngby, Denmark}
\email{pabe@dtu.dk}

\author{Maria Montanucci}
\address{Department of Applied Mathematics and Computer Science, Technical University of Denmark, DK-2800, Kongens Lyngby, Denmark}
\email{marimo@dtu.dk}

\author{Jonathan Niemann}
\address{Department of Applied Mathematics and Computer Science, Technical University of Denmark, DK-2800, Kongens Lyngby, Denmark}
\email{jtni@dtu.dk}

\keywords{}
\subjclass[2010]{} 

\begin{abstract}
Let $n$ be a positive integer and define $q=3^n$. It is well known that there exists a maximal function field $Z_3$ over $\Fqq$ of genus $g_3=q(q-1)/6$ and that $g_3$ is the third largest genus a maximal function field over $\Fqq$ can have. In this article we show that any maximal function field over $\Fqq$ of genus $g_3$ is isomorphic to $Z_3$. This extends previously known uniqueness result for the largest and second largest genera of maximal curves.
\end{abstract}

\maketitle

\section{Introduction}

An algebraic function field $F$ over a finite field with square cardinality $\mathbb{F}_{q^2}$ is called $\mathbb{F}_{q^2}$-maximal if it attains the Hasse--Weil upper bound with equality. More precisely, if $g(F)$ denotes the genus of $F$ then being $\mathbb{F}_{q^2}$-maximal means that
$$N(F) =q^2 +1+2qg(F),$$
where $N(F)$ denotes the number of places of degree one (we will refer to them also as $\mathbb{F}_{q^2}$-rational places)
of $F$. Maximal function fields, particularly those with large genus, have been and still are intensively
investigated in the literature, both for their theoretical interest (as extremal objects) and for their applications in coding theory. In fact, according to Goppa's construction \cite{Goppa}, algebraic function fields (and particularly those with many places of degree one) are most likely to give rise to good error-correction codes. 

Partial classification results for maximal function fields are known, however a full classification over a given finite field $\mathbb{F}_{q^2}$ is still an open problem. 

It is well known that the genus $g(F)$ satisfies $g(F) \leq q(q -1)/2 =: g_1$, see \cite{Ihara}, and that $g(F)=g_1$ if and only if $F$ is $\mathbb{F}_{q^2}$-isomorphic to the Hermitian function field 
$$X_1=\mathbb{F}_{q^2}(x,y), \quad y^q +y=x^{q+1},$$
see \cite{ruckCharacterizationHermitianFunction1994}.

Proceeding in analyzing the spectrum of large genera of maximal function fields, Fuhrmann, Garcia and Torres in \cite{fuhrmannMaximalCurves1997} proved that either $g(F) \leq \lfloor (q -1)^2/4\rfloor =: g_2$, or $g(F) = g_1$. For $q$
odd, $g(F)=g_2$ occurs if and only if $F$ is $\mathbb{F}_{q^2}$-isomorphic to the function field

$$X_2=\mathbb{F}_{q^2}(x,y), \quad  y^q +y =x^{\frac{q+1}{2}},$$
see \cite[Theorem 3.1]{fuhrmannMaximalCurves1997}. For $q$ even, a similar (but weaker) result is first obtained in \cite{AT}. Indeed 
the uniqueness of an $\mathbb{F}_{q^2}$-maximal function field $F$ ogf genus $g(F) = g_2$ is ensured under the extra 
condition that the function field $F$ has a particular Weierstrass place of degree one. If the extra condition is met, 
then it is proven in \cite{AT} that $g(F) = g_2 =q(q-2)/4$ 
if and only if $F$ is $\mathbb{F}_{q^2}$-isomorphic to the function field

$$Y_2=\mathbb{F}_{q^2}(x,y), \quad y^{\frac{q}{2}}+\ldots+y^2+x=x^{q+1}.$$

The full proof of uniqueness without the extra assumption on the particular Weierstrass place was then later obtained in \cite[Theorem 10.45]{hirschfeldAlgebraicCurvesFinite2008}. 

The value of the third largest genus of an $\mathbb{F}_{q^2}$-maximal function field $F$ has been computed in 
\cite{korchmarosGenusMaximalCurve2002}, where it is proven that either $g(F) \leq \lfloor (q^2 -q +4)/6\rfloor =: g_3$, or $g(F) = g_2$ or  $g(F) = g_1$. In \cite[Remark 3.4]{korchmarosGenusMaximalCurve2002} it is further shown that $g(F) \leq g_3$ is the best bound possible, as 
there exist $\mathbb{F}_{q^2}$-maximal function fields of genus $g_3$, namely

\begin{itemize}
    \item $X_3=\mathbb{F}_{q^2}(x,y), \quad x^{\frac{q+1}{3}} +x^{2\frac{q+1}{3}} +y^{q+1} =0, \ \textrm{if} \ q \equiv 2 \pmod 3;$
    \item $Y_3=\mathbb{F}_{q^2}(x,y), \quad y^q -yx^{{2(q-1)}{3}} +x^{\frac{q-1}{3}} =0, \ \textrm{if} \ q \equiv 1 \pmod 3;$
    \item $Z_3=\mathbb{F}_{q^2}(x,y), \quad y^q +y=\bigg(\sum_{i=1}^n x^{\frac{q}{p^i}}\bigg)^2 =0, \ \textrm{if} \ q =3^n.$
\end{itemize}

Understanding whether or not these function fields are the only $\mathbb{F}_{q^2}$-maximal function fields of genus $g_3$ is a well-known open problem. Researchers in the field of algebraic function fields tried to approach the problem from different perspectives. On one hand trying to classify algebraic function fields of Frobenius dimension three (which a function field of genus $g_3$ would have) \cite{FanaliGiulietti}, which gave uniqueness results for small values of $q$ (namely $q\leq 11$, see also \cite{FanaliGiuliettiPlatoni}), on the other hand studying Weierstrass places on $X_3$, $Y_3$ and $Z_3$ with the aim of finding a special Weierstrass place to use in a similar approach as in \cite{AT}, see \cite{BMV1,BMV2,BMV3}.  

In this paper, in the case $q=3^n$, combining St\"ohr--Voloch theory and embedding results in cubic surfaces (as a new tool that has never been used before in connection with maximal function fields), we manage to prove existence of a special Weierstrass place of degree one, which in additional combination with the famous Natural Embedding Theorem \cite{Korchmáros_Torres_2001}, allowed us to prove uniqueness of a maximal function field of genus $g_3$.

Hence the main result of this paper is the following.

\begin{theorem}\label{thm:main}
    Let $q=3^n\geq 9$, and suppose $F$ is an $\mathbb{F}_{q^2}$-maximal function field of genus 
    $$
        g(F) = \frac{q(q-1)}{6}.
    $$
    Then, there exist $x,y \in F$ such that $F = \Fqq(x,y)$ and
    $$
        y^q + y = \left(x + x^3 + \dots + x^{\frac{q}{3}}\right)^2,
    $$
    i.e., for $q = 3^n\geq 9$ there is just one $\Fqq$-maximal function field of genus $q(q-1)/6$ up to isomorphism, and this function field is $Z_3$.
\end{theorem}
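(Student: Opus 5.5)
The plan is to pass through the Frobenius linear series and then recover the equation from a suitable degree-one place. For an $\Fqq$-maximal function field $F$, consider the Frobenius linear series $\D=|(q+1)P_0|$, with $P_0$ any rational place. By the Natural Embedding Theorem of Korchm\'aros--Torres, $\D$ is base-point free and simple, and it does not depend on $P_0$. Its dimension $r$ is the Frobenius dimension. Because $g(F)=q(q-1)/6$ lies strictly between $g_2$ and the genera allowed for larger $r$ (Castelnuovo's bound), we must have $r=3$. So $F$ is the function field of a curve $\X\subset\mP^3$ of degree $q+1$, and $\X$ lies on the Hermitian surface $\mathcal H_3$. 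Castelnuovo's bound in $\mP^3$ is attained up to a small defect at this genus, so $\X$ lies on a surface of low degree. I would first show that $\X$ lies on a (possibly singular) quadric or cubic surface, and then use the genus formula for curves on quadrics to rule out the quadric case. In characteristic $3$ this should leave $\X$ on a cubic surface $S$.

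Next I would apply St\"ohr--Voloch theory to $\D$. The orders are $0,1,\epsilon_2,q+1$ at non-special points, and at rational places $P$ the $(\D,P)$-orders are $0,1,j_2(P),q+1$. The Frobenius-orders setup for maximal curves gives $\nu_1=1$ and forces $\epsilon_2$ to be $2$ or a power of $3$. Counting the Weierstrass weight of $\D$ against the number $q^2+1+2qg$ of rational places should show that $\epsilon_2=3$ (or some $p$-power) generically. It should also show that some rational place $P_\infty$ has $j_2(P_\infty)=q$ rather than the generic value. The cubic surface is what I expect to pin this down. The osculating plane at $P_\infty$ meets $S$ in a plane cubic, and intersection multiplicities with the curve of degree $q+1$ give the constraint. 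I expect this existence argument for the special Weierstrass place to be the \emph{main obstacle}: it needs a careful case analysis of the cubic surface (smooth, cone, ruled, etc.) together with the weight count.

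Given $P_\infty$, I would compute the Weierstrass semigroup $H(P_\infty)$. It contains $q,q+1$ from the Frobenius series, together with an element forced by $j_2=q$ and the $3$-divisibility. The genus $q(q-1)/6$ should then force $H(P_\infty)=\langle q/3\cdot 2?,\,q,\,q+1\rangle$-type data; concretely I expect $H(P_\infty)=\langle 2q/3,\,q,\,q+1\rangle$, matching $Z_3$. From there I would choose $x$ with pole divisor $(2q/3)P_\infty$ (or of the appropriate order) and $y$ with pole order $q+1$. The relation of $\D$ to $|(q+1)P|$, i.e.\ the fundamental equation $qP+\Phi(P)\sim (q+1)P_\infty$, lets me write functions whose divisors are supported on rational places and obtain a relation of Artin--Schreier type $y^q+y=f(x)$. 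This is analogous to the argument of \cite{AT} and of \cite[Theorem 10.45]{hirschfeldAlgebraicCurvesFinite2008}. Finally, comparing degrees and the maximality condition (all rational places split), I would normalize $f$ by $\Fqq$-linear changes of variables to $(x+x^3+\dots+x^{q/3})^2$. The additive polynomial here should arise as the trace-like map whose kernel matches the ramification structure.
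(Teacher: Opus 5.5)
Your sketch leaves the decisive step open: the existence of the special rational place. The tools you name do not supply it. The weight count only gives $\deg(R-T)=q-3$, where $R$ is the St\"ohr--Voloch ramification divisor of $\D$ and $T$ is the sum of the rational places. It says nothing about where this excess sits. A priori it could live on non-rational points with $j_2>2$, or be spread over up to $q-3$ rational points with $j_2=3$. The plane-section analysis on the cubic surface is likewise only local: it yields $j_2\in\{2,3,q/3+1\}$ at each rational point.

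The missing idea is a characteristic-$3$ phenomenon. Since the generic orders are $0,1,2,q$, the determinant $\Delta$ formed by $f_0,f_1,f_2$ and their first two Hasse derivatives satisfies $D^{(1)}_z\Delta=0$. So $\Delta$ is a cube, and $R-T=3A$ with $A$ effective of degree $q/3-1$, $A\sim W-(q/3-1)D$ and $\ell(A)=1$. This produces gaps $v_P(A)+t+1$ that exclude $j_2>2$ at non-rational points, so $\operatorname{supp}A\subseteq\X(\Fqq)$. A separate semigroup/torsion argument excludes $j_2=3$ at rational points. A rational $P\in\operatorname{supp}A$ then has the symmetric semigroup $\langle 2q/3,q,q+1\rangle$, so $W\sim(2g-2)P$, and uniqueness of $A$ in its class forces $A=(q/3-1)P$.

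Your numerology is also off. The generic order sequence is $(0,1,2,q)$, so $\epsilon_2=2$ rather than $3$, and the top order at non-rational points is $q$, not $q+1$. The special place has $j_2=q/3+1$, not $q$: with $j_2=q$, the value $q+1-j_2=1$ would be a non-gap, contradicting the semigroup you then correctly predict.

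The final step also fails as written. If $x$ has pole divisor $(2q/3)P_\infty$, then $[F:\Fqq(x)]=2q/3$, which is not a power of $3$. But a relation $y^q+y=f(x)$ generating $F$ forces $[F:\Fqq(x)]$ to be a power of $3$. In $Z_3$ the roles are the other way round: $x$ has pole order $q$ and $y$ has pole order $2q/3$. The function $w$ of pole order $q+1$ is not a generator; it is eliminated through the cubic as $w=u^3+\delta u^2-v^2$.

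The fundamental linear equivalence alone does not produce an Artin--Schreier equation either. In the paper this comes from the explicit cubic relation $v^2=au^3+(bu+c)w+du^2+eu+f$ together with a sparse Hermitian matrix, obtained because the osculating plane equals the tangent plane at $P$. The steps are:
\begin{enumerate}
\item Local expansions give $b=0$, and together with $v_P(R)=q-2$ they give $\kappa=0$.
\item A Cartier-operator argument excludes $c=0$. Otherwise $F$ would be a degree-$q/3$ cover of an elliptic field containing a non-maximal intermediate field.
\item One computes $\div(dv)=(2g-2)P$, so $P$ is the only ramified place of $F/\Fqq(v)$.
\item With the integral basis $1,u,\dots,u^{q-1}$, this forces $\varphi'(T)$ to be a nonzero constant.
\item Comparison with the differentiated Hermitian relation then gives $\varphi(T)=T^q+\gamma T+G(v)$ with $G=-(G'/\ell)^2+\theta$, from which the normalization to $(x+x^3+\dots+x^{q/3})^2$ follows.
\end{enumerate}

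Finally, in positive characteristic the existence of the cubic surface does not follow from Castelnuovo's bound. The paper uses Macr\`i--Schmidt for $q\geq 27$ and a Riemann--Roch dimension count for $q=9$.
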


Note that the case $q=3$ was already known in the literature, explaining our assumption that $q \geq 9$. Indeed, in this case $g(F)=1$ is in fact the second largest possible genus for an $\F_{3^2}$-maximal function field. Therefore, it follows from \cite[Theorem 3.1]{fuhrmannMaximalCurves1997} that $F$ is isomorphic to the function field $\F_{3^2} (x,y)$ with $y^3 +y = x^2$.

The paper is organized as follows. In Section 2 the necessary preliminary results on maximal function fields and curves (particularly of genus $g_3$), as well as some basic definitions of St\"or--Voloch theory are recalled. Proceeding then with the proof of Theorem \ref{thm:main}, we show in Section 3 how St\"or--Voloch theory can be used to find the special Weierstrass place of degree one. Lastly in Section 3 we complete the proof of Theorem \ref{thm:main} by providing a defining equation of a maximal function field of genus $g_3$ and showing how it can be simplified to the defining equation of the function field $Z_3$ (implying that they are isomorphic). 

\section{Preliminaries}\label{sec:preliminaries}
We collect some general results and introduce notation that will be useful later. We start by citing a result due to Weil on canonical divisors.

\begin{lemma}[{\cite[Chapter XIII, Section 12, Theorem 13, p. 291]{weilBasicNumberTheory1967
}}]\label{lemma:canonical_half_Weil}
    For any function field over a finite field, there exists a divisor $M$ such that $2M$ is a canonical divisor.
\end{lemma}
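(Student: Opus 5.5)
Since the lemma is quoted from Weil, the plan is to reproduce it through the Jacobian rather than Weil's adelic argument. Let $F/\Fq$ have genus $g$, with Jacobian $J$ and canonical divisor $K$. Over $\Fq$ every Galois-stable divisor class of $F\overline{\Fq}$ contains a divisor defined over $\Fq$. Indeed, by Lang's theorem $H^1(\Fq,J)=0$, and by F.K. Schmidt there is a rational divisor of degree one. So it suffices to find an $\Fq$-rational divisor class $\theta$ with $2\theta=[K]$, i.e. a rational theta characteristic.

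\emph{Step 1 (reduction to a class in $J(\Fq)/2J(\Fq)$).} Using Schmidt's theorem, pick a rational divisor $D$ of degree $g-1$ and set $c=[K-2D]\in J(\Fq)$. Replacing $D$ by $D+E$ with $[E]\in J(\Fq)$ changes $c$ to $c-2[E]$. Hence the lemma is equivalent to $c\in 2J(\Fq)$, i.e. to the vanishing of $\bar c\in J(\Fq)/2J(\Fq)$. Since $H^1(\Fq,J)=0$, the Kummer sequence identifies this group with $H^1(\Fq,J[2])$ when $p\neq 2$. Under this identification $\bar c$ is the class of the $J[2]$-torsor $T$ of theta characteristics over $\overline{\Fq}$; these exist because $J(\overline{\Fq})$ is divisible. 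So I must show that the Frobenius $\sigma$ fixes some element of $T$.

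\emph{Step 2 (quadratic forms).} For $p\neq2$, I would use Mumford's description: $T$ is $\sigma$-equivariantly in bijection with the quadratic forms $Q$ on $V=J[2]\cong\F_2^{2g}$ whose polar form is the Weil pairing $e_2$. The map sends $\theta$ to $Q_\theta(a)=h^0(\theta+a)-h^0(\theta)\bmod 2$. Frobenius acts on $V$ by a symplectic automorphism $\sigma$, and on forms by $Q\mapsto Q\circ\sigma^{-1}$. The task thus becomes a finite-group statement: the cyclic group $\langle\sigma\rangle\subset \mathrm{Sp}(V)$ fixes some quadratic refinement of $e_2$. To attack it, I would first try the function $a\mapsto \sum_{i} Q_0(\sigma^i a)$ averaged along the orbit. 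If that fails, I would use the alternative characterization that a symplectic automorphism $\sigma$ fixes a refinement iff the linear functional $a\mapsto Q_0(\sigma a)-Q_0(a)$ is of the form $e_2(b-\sigma b,\cdot)$. I would then check this using that $\sigma$ comes from an element of $\mathrm{Sp}_{2g}(\mathbb{Z}_2)$, namely Frobenius on the Tate module, together with the theta-characteristic structure, i.e. that the Riemann constant is Galois-equivariant.

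\emph{Step 3 (characteristic $2$).} When $p=2$, $J[2]$ is not étale and the above breaks down. Here I would instead use that $F$ has a canonical divisor of the form $\div(dx)$ with $x$ separating. Writing $F=F^2(x)$, every differential is $(u^2+xv^2)\,dx$. Choosing $\omega=d x$ and examining $\div(dx)$ via the Cartier operator, one shows that the divisor of an exact differential is twice a divisor. Indeed, locally $dx=d(t^{2k+1}u)$ has even valuation, since $d(t^{m})=mt^{m-1}dt$ vanishes for even $m$; so $\div(dx)=2M$ directly, which proves the lemma in characteristic $2$.

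The main obstacle is Step 2, the existence of a Frobenius-invariant theta characteristic for odd $p$. This is genuinely arithmetic; it is false over general fields such as $\mathbb{R}$. If the symplectic-group argument does not close, I would fall back on Weil's original route: represent $K$ by an idele and use the product formula for quadratic characters, i.e. reciprocity, to show that the obstruction in $\mathrm{Pic}(F)/2$ vanishes.
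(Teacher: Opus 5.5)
\noindent The paper does not prove this lemma; it only quotes Weil (the function-field case of Hecke's theorem that the different is a square in the class group, proved there by adelic methods). Your geometric route is a legitimate alternative, and several parts of it are correct: the reduction to finding a Frobenius-fixed point of the torsor $T$ of theta characteristics, the $\sigma$-equivariant bijection of $T$ with the quadratic refinements of $e_2$ via Mumford for $p\neq 2$, and Step 3 for $p=2$ (writing $x=\sum a_it^i$ at any place, only odd $i$ contribute to $dx/dt$, so every $v_P(dx)$ is even).

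\noindent There is, however, a genuine gap. For odd $p$ you stop exactly at the step that carries the whole content: you never show that $\sigma$ fixes a refinement. The averaging idea fails in general. If $\sigma$ has even order $m$ on $V$, then $\sum_{i<m}Q_0\circ\sigma^i$ has polar form $m\,e_2=0$, so it is linear rather than a refinement. This happens, e.g., for an elliptic curve with exactly one nonzero rational $2$-torsion point. The appeal to $\mathrm{Sp}_{2g}(\mathbb{Z}_2)$ and the Riemann constant is not an argument, and falling back on Weil just defers to the citation.

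\noindent The missing idea is a two-line piece of linear algebra, and it works for every $\sigma\in\mathrm{Sp}(V)$. Put $\lambda(a):=Q_0(\sigma a)-Q_0(a)$.
\begin{itemize}
\item Since $\sigma$ preserves $e_2$, $\lambda$ is additive. Hence $\lambda=e_2(d,\cdot)$ for a unique $d\in V$.
\item $\lambda$ trivially vanishes on $\ker(1-\sigma)$, so $d\in\ker(1-\sigma)^{\perp}$.
\item The identity $e_2((1-\sigma)b,a)=e_2(b,(1-\sigma^{-1})a)$ shows $\mathrm{Im}(1-\sigma)^{\perp}=\ker(1-\sigma)$. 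Hence $\mathrm{Im}(1-\sigma)=\ker(1-\sigma)^{\perp}$, which contains $d$.
\end{itemize}
So $d=b-\sigma b$ for some $b$, which is exactly your criterion, and $Q_0+e_2(\sigma b,\cdot)$ is $\sigma$-invariant.

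\noindent This is precisely where finiteness of the constant field enters: the Galois action is generated by the single element $\sigma$. A non-cyclic image in $\mathrm{Sp}(V)$, such as all of $\mathrm{Sp}_4(\F_2)\cong S_6$, need not fix any refinement.

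\noindent Once you have a fixed $\theta\in T$, the Kummer detour in Step 1 is unnecessary. $\theta$ is a Frobenius-invariant class with $2\theta=[K]$, and it contains an $\Fq$-rational divisor $M$ by the standard descent over finite fields. That descent ultimately rests on $\mathrm{Br}(\Fq)=0$, or on the degree-one divisor killing the Brauer obstruction; Lang's $H^1(\Fq,J)=0$ is not what gives it.
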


Now, let $\X$ be an $\mathbb{F}_{q^2}$-maximal curve of genus $g(\X)$, i.e., a projective, geometrically irreducible, non-singular algebraic curve over $\F_{q^2}$, with exactly $q^2+1+2g(\X)q$ rational points, and let $F/\Fqq$ be the function field of $\X$. Let $\overline{\F}_{q^2}$ be a fixed algebraic closure of $\Fqq$. When we apply St\"ohr--Voloch theory, in particular when considering order sequences and the ramification divisor, we will work in the constant field extension $\overline{\F}_{q^2}F/\overline{\F}_{q^2}$ of $F/\Fqq$. Throughout the paper we use both the language of curves and that of function fields. In particular, we will use the correspondence between points and places repeatedly. For example, the set of $\Fqq$-rational places of $F$ can be identified with $\X(\Fqq)$, the set of $\Fqq$-rational points of $\X$. With slight abuse of notation, we will also say that a place $P$ of $\overline{\F}_{q^2}F$ is $\Fqq$-rational or that $P \in \X(\Fqq)$, if $P \cap F$ is an $\Fqq$-rational place of $F$.

Define $\D := |D|=|(q+1)P_0|$, with $P_0 \in \X(\Fqq)$. Then $\D$ is the Frobenius linear series, and the Frobenius dimension of $\X$ is $r := \dim \D = \dim_{\Fqq}\left(\mathcal{L}((q+1)P_0)\right)-1$. Let $f_0, f_1, \dots, f_r$ be a basis of $\D$. By the Natural Embedding Theorem (see \cite{Korchmáros_Torres_2001}), there is an $M \leq r$ such that, after possibly making an $\Fqq$-linear change of basis,
$$
    \pi = (f_0 : \dots : f_M): \X \to \mP^M
$$
is a non-degenerate $\mathbb{F}_{q^2}$-birational morphism and $\pi(\X)$ is a curve of degree $q+1$ lying on a non-degenerate Hermitian variety. We assume without loss of generality, as in \cite{Korchmáros_Torres_2001}, that the Hermitian variety is in its canonical form, i.e., 
$$
    f_0^{q+1} + f_1^{q+1} + \cdots + f_M^{q+1} = 0.
$$
\begin{remark}
    We will mostly be concerned with the case where $M = r = 3$, but we stick with the general setting of \cite{Korchmáros_Torres_2001} for now. 
\end{remark}

Let $\eps_0, \eps_1, \dots, \eps_M$, denote the generic order sequence associated to $\pi$, and note that $\eps_0 = 0$, $\eps_1 = 1$, and $\eps_M = q$ follows from maximality of $\X$ (see \cite[page 105]{Korchmáros_Torres_2001}). Let $R$ denote the ramification divisor in the sense of \cite{stohrWeierstrassPointsCurves1986}, and define 
$$
    T := \sum_{P \in \X(\Fqq)} P \hspace{1cm} \text{ and } \hspace{1cm} E := R-T.
$$
Since the $M$'th order at an $\Fqq$-rational point is $q+1$ it follows from \cite[Theorem 1.5]{stohrWeierstrassPointsCurves1986} that $E$ is an effective divisor.

Fix a separating element $z\in F$ and let $D_z^{(i)}$ denote the $i$'th Hasse derivative with respect to $z$ (see, e.g., \cite[Section 5.10]{hirschfeldAlgebraicCurvesFinite2008} for properties of Hasse derivatives). We will need the function 
$$
    b_z := \sum_{i=0}^M f_i D_z^{(q)}(f_i^q) \in F
$$
on several occasions. First, to prove the following consequence of \cite[Lemma 4.7]{Korchmáros_Torres_2001}. We will use that standard notation $A \sim B$ to indicate that two divisors $A$ and $B$ are linearly equivalent. Further for a function $f$, its principal divisor will be denoted by $\div(f)$.

\begin{lemma}\label{lemma:divisor_T}
    With notation as above, it holds that 
    $$
        T = \div(b_z) + q \div(dz) + (q+1)D.
    $$
    In particular,
    $$
        T \sim qW + (q+1)D,
    $$
    where $W$ is a canonical divisor of $F$.
\end{lemma}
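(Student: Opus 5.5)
We would derive the identity from \cite[Lemma 4.7]{Korchmáros_Torres_2001}. That lemma describes the function $b_z=\sum_i f_i D_z^{(q)}(f_i^q)$ as the one whose zeros cut out exactly the $\Fqq$-rational points, each with multiplicity one. The remaining contribution comes from the poles of the $f_i$ and the behaviour of the local parameter versus $z$.

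\textbf{Step 1: local computation at a place $P$.} We work in $\overline{\F}_{q^2}F$. At each place $P$ we normalise the coordinates: let $t$ be a local parameter at $P$ and $e_P := -\min_i v_P(f_i)$, so that $v_P(D)$ relates to $e_P$ since the $f_i$ lie in $\mathcal{L}(D)$. Writing $g_i = t^{e_P} f_i$ gives functions regular at $P$ and not all vanishing there. The Hasse derivative obeys the chain rule in the form $D_z^{(q)}(h^q) = (D_z^{(1)} h)^q$, which is valid since $q$ is a power of the characteristic. Hence
$$
b_z=\sum_i f_i (D_z f_i)^q = \Big(\frac{dt}{dz}\Big)^q \sum_i f_i (D_t f_i)^q .
$$
Thus $v_P(b_z) = -q\,v_P(dz) + v_P\big(\sum_i f_i (D_t f_i)^q\big)$ locally, using $v_P(dt)=0$.

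\textbf{Step 2: substituting the normalised coordinates.} We replace $f_i=t^{-e_P}g_i$. The Hermitian relation $\sum f_i^{q+1}=0$ kills the extra terms coming from differentiating $t^{-e_P}$, because the relation is homogeneous:
$$
\sum_i g_i (D_t(t^{-e_P}g_i))^q = t^{-qe_P}\sum_i g_i (D_t g_i)^q + (\text{multiple of } \textstyle\sum g_i^{q+1}).
$$
So $v_P\big(\sum f_i (D_tf_i)^q\big) = -(q+1)e_P + v_P\big(\sum_i g_i (D_t g_i)^q\big)$.

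\textbf{Step 3: the remaining term.} It remains to show that $v_P\big(\sum_i g_i (D_t g_i)^q\big)$ equals $1$ if $P$ is $\Fqq$-rational and $0$ otherwise. This is the content of \cite[Lemma 4.7]{Korchmáros_Torres_2001}, which uses the fact that $\pi(P)$ lies on the Hermitian variety together with maximality. At a rational point, the tangent hyperplane meets the curve with multiplicity $q+1$. Nondegeneracy of the Hermitian form gives exact valuation $1$. At a non-rational point, $\sum g_i(D_tg_i)^q$ is the pairing of $\pi(P)$ with the tangent direction, and this is nonzero there.

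\textbf{Step 4: assembling the divisor.} Since $D=(q+1)P_0$, we take $e_P=v_P(D)$; this is legitimate because the base-point-free basis means $\min_i v_P(f_i)=-v_P(D)$. Summing over $P$ gives $\div(b_z) = T - q\div(dz) - (q+1)D$, which is the claimed identity. The linear equivalence follows at once, since $\div(dz)$ is a canonical divisor $W$ and $\div(b_z)\sim 0$.

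\textbf{Expected obstacle.} The main obstacle is Step 3, the exact valuation $1$ at rational points and $0$ elsewhere. We would import it directly from the cited lemma rather than reprove it.
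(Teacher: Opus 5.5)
Your proposal is correct and follows essentially the same route as the paper. Both arguments normalise the coordinates by $t^{v_P(D)}$ (the base-point-freeness you note is what the paper uses implicitly), use $\sum f_i^{q+1}=0$ to kill the cross term, pass between $D_t$ and $D_z$ by the chain rule, and import the local valuation ($1$ at $\Fqq$-rational points, $0$ elsewhere) from Lemma 4.7 of Korchm\'aros--Torres.
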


\begin{proof}
    Fix a point $P$ on $\X$ and a uniformizer $t$ at $P$. Write $h := t^{v_P(D)}$ such that all $hf_i$ are regular at $P$ (in fact this only changes the $f_i$'s when $P=P_0$). Now \cite[Lemma 4.7]{Korchmáros_Torres_2001} implies that
    $$
        v_P\left(\sum_{i=0}^M hf_i D_t^{(q)}\left((hf_i)^q\right)\right) = 
        \begin{cases}
            1 &\text{ if } P \in \X(\Fqq), \text{ and } \\
            0 &\text{ else.}
        \end{cases}
    $$

    On the other hand, by properties of Hasse derivatives and the fact that $\sum_{i=0}^Mf_i^{q+1} = 0$, we have
    \begin{align*}
        \sum_{i=0}^M hf_i D_t^{(q)}\left((hf_i)^q\right) &= \sum_{i=0}^M{hf_i\left(D_t^{(1)}(hf_i)\right)^q} \\
            &= \sum_{i=0}^M{h^{q+1}f_i\left(D_t^{(1)}(f_i)\right)^q} + h\left(D_t^{(1)}(h)\right)^q\sum_{i=0}^M{f_i^{q+1}} \\
            &= h^{q+1} \sum_{i=0}^M f_i\left( D_t^{(1)}(z) D_z^{(1)}(f_i)\right)^q \\
            &= h^{q+1} \cdot \left( dz/dt \right)^q \cdot b_z.
    \end{align*}

    Combining the above yields
    $$
        (q+1) \cdot v_P(D) + q\cdot v_P(dz) + v_P(b_z) =
        \begin{cases}
            1 &\text{ if } P \in \X(\Fqq), \text{ and } \\
            0 &\text{ else.}
        \end{cases}
    $$
    Since $P$ was arbitrary the result follows.
\end{proof}

Next we cite a known lemma, relating the divisor of $b_z$ to the divisor of the Wronskian $w := \det\left( D_z^{(\eps_i)}f_j\right)_{i,j = 0, \dots, M}$.

\begin{lemma}[{\cite[Lemma 4.4]{Korchmáros_Torres_2001}}]\label{lemma:divisor_of_b_z}
    With notation as above, it holds that
    $$
        q\div(f_M)+ \div(w) = \div(b_z)+ \div(\Delta),
    $$
    where
    $$
        \Delta := \det
        \begin{pmatrix}
            f_0 & f_1 & \cdots & f_{M-1} \\
            D_z^{(1)}f_0 & D_z^{(1)} f_1 & \cdots & D_z^{(1)} f_{M-1}\\
            \vdots & \vdots & \ddots & \vdots \\
            D_z^{(\eps_{M-1})}f_0 & D_z^{(\eps_{M-1})} f_1 & \cdots &D_z^{(\eps_{M-1})} f_{M-1} 
        \end{pmatrix}.
    $$
\end{lemma}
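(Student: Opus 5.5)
The plan is to compare the last row of the Wronskian matrix with the vector $u := (f_0^q, f_1^q, \dots, f_M^q)$, using the Hermitian relation $\sum_{i=0}^M f_i^{q+1} = 0$. Since only divisors are involved, overall signs and nonzero constants can be ignored throughout.

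First I would record how Hasse derivatives act on $f^{q+1} = f\cdot f^q$. By the Leibniz rule, $D_z^{(k)}(f^{q+1}) = \sum_{j} D_z^{(j)}(f^q)\, D_z^{(k-j)}(f)$. Moreover $D_z^{(j)}(f^q) = 0$ unless $q \mid j$, and $D_z^{(q)}(f^q) = (D_z^{(1)} f)^q$. Hence for $0 \le k < q$ we get $D_z^{(k)}(f^{q+1}) = f^q D_z^{(k)} f$. For $k = q$ we get
$$
D_z^{(q)}(f^{q+1}) = f^q D_z^{(q)} f + f D_z^{(q)}(f^q).
$$
Apply $D_z^{(\eps_i)}$ to the Hermitian relation. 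For $i = 0, \dots, M-1$ we have $\eps_i < q$, so $\sum_j f_j^q D_z^{(\eps_i)} f_j = 0$. For $i = M$ we have $\eps_M = q$, so $\sum_j f_j^q D_z^{(q)} f_j = -b_z$.

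Next I would use linear algebra. Let $A$ be the $M \times (M+1)$ matrix formed by the first $M$ rows of the Wronskian matrix. Then $u$ lies in the kernel of $A$. Since the $\eps_i$ are the generic orders, $w \neq 0$, so $A$ has rank $M$ and its kernel is one-dimensional. The kernel is spanned by the cofactor vector $c = (c_0, \dots, c_M)$, where $c_j = \pm$ the $M\times M$ minor of $A$ obtained by deleting column $j$. In particular $c_M = \pm\Delta$, and $\Delta \neq 0$ unless all $c_j$ vanish, which is impossible. Hence $u = \lambda c$ for some $\lambda \in F^\*$. Comparing last coordinates gives $f_M^q = \pm \lambda \Delta$. (Here $f_M \neq 0$ because $\pi$ is non-degenerate.)

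Finally, expand $w$ along its last row: $w = \sum_j D_z^{(q)} f_j \cdot c_j$. Substituting $c_j = \lambda^{-1} f_j^q$ gives $w = \lambda^{-1}\sum_j f_j^q D_z^{(q)} f_j = -\lambda^{-1} b_z$. Therefore $w = \pm b_z \Delta / f_M^q$, and taking divisors yields $q\,\div(f_M) + \div(w) = \div(b_z) + \div(\Delta)$.

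The only delicate point is the Hasse-derivative computation in the first step, in particular that no cross terms survive for $k<q$. This rests on $D_z^{(j)}(f^q) = (D_z^{(j/q)} f)^q$ when $q\mid j$ and $0$ otherwise. The rest is Cramer's rule.
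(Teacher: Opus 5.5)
Your proof is correct. Differentiating $\sum_j f_j^{q+1}=0$ gives $\sum_j f_j^q D_z^{(\eps_i)}f_j=0$ for $i<M$ and $\sum_j f_j^q D_z^{(q)}f_j=-b_z$. Cramer's rule then gives $w=\pm b_z\Delta/f_M^q$: $(f_0^q,\dots,f_M^q)$ is proportional to the cofactor vector of the last Wronskian row, and you expand $w$ along that row. The paper does not reprove this lemma and only cites Korchm\'aros--Torres' Lemma 4.4; your argument is the standard one behind such a statement.

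One small point to tidy: $\Delta\neq 0$ does not follow from $c\neq 0$ as you phrase it. It does follow right afterwards from $f_M^q=\pm\lambda\Delta$ with $f_M\neq 0$, and $b_z\neq 0$ follows likewise from $w\neq 0$.
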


Combining the previous lemmas yields a useful description of $E = R-T$:

\begin{corollary}\label{cor:divisor_E}
    With notation as above, we have
    $$
        E = \div(\Delta) - q \div (f_M) + \left(\sum_{i=0}^M \eps_i - q \right)\div(dz) + (M-q) D
    $$
\end{corollary}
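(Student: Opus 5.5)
We obtain the formula for $E$ by combining Lemma \ref{lemma:divisor_T} and Lemma \ref{lemma:divisor_of_b_z} with the standard Stöhr--Voloch description of $R$. Recall from \cite{stohrWeierstrassPointsCurves1986} that the ramification divisor of the morphism $\pi$ is
$$
    R = \div(w) + \left(\sum_{i=0}^M \eps_i\right)\div(dz) + (M+1)D .
$$
This holds because $D$ is the divisor of the linear series, $w$ is the Wronskian with respect to $z$ built from the $f_j$ (whose poles are bounded by $D$), and changing the local parameter multiplies the Wronskian by $(dz/dt)^{\sum \eps_i}$. If necessary, I would verify this locally at each $P$ in the same way as in the proof of Lemma \ref{lemma:divisor_T}. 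Twist by $h=t^{v_P(D)}$, so that $hf_j$ are regular, and compare $\det(D_t^{(\eps_i)}(hf_j))$ with $h^{M+1}(dz/dt)^{\sum\eps_i} w$. The order sequence $\eps_i$ is then used exactly as in the definition of $R$.

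Next, Lemma \ref{lemma:divisor_of_b_z} gives $\div(w) = \div(b_z)+\div(\Delta) - q\div(f_M)$. Lemma \ref{lemma:divisor_T} gives $\div(b_z) = T - q\div(dz) - (q+1)D$. Substituting both into the expression for $R$ yields
$$
    R = T - q\div(dz) - (q+1)D + \div(\Delta) - q\div(f_M) + \left(\sum_{i=0}^M\eps_i\right)\div(dz) + (M+1)D .
$$
Subtracting $T$ and collecting terms gives $E = R - T = \div(\Delta) - q\div(f_M) + (\sum\eps_i - q)\div(dz) + (M-q)D$, which is the claimed identity.

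The only real point of care is the first step: the normalization of $R$ via the Wronskian has to be consistent with the conventions in \cite{stohrWeierstrassPointsCurves1986}. In particular, the contribution of $D$ (namely the factor $M+1$ from twisting each of the $M+1$ columns by $h$) and the sign conventions for the divisor of $dz$ must match. Once this local check is done, the rest is pure bookkeeping of divisors.
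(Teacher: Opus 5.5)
Your proposal is correct and is the same argument as the paper's proof. You write $R=\div(w)+\bigl(\sum_{i=0}^M\eps_i\bigr)\div(dz)+(M+1)D$, substitute $\div(w)$ from Lemma \ref{lemma:divisor_of_b_z} and $\div(b_z)$ from Lemma \ref{lemma:divisor_T}, and subtract $T$; the paper does this, except that it takes the expression for $R$ directly as the St\"ohr--Voloch definition (legitimate here since $|D|$ is base-point-free), so your optional local check is not needed.
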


\begin{proof}
    In fact, by the definition of the ramification divisor (see \cite{stohrWeierstrassPointsCurves1986}) and the results above, we have
    \begin{align*}
        E = R-T &= \left(\div(w) + (\eps_1 + \dots + \eps_M)\div(dz) + (M+1)D\right) \\
           & \quad - \left( \div(b_z) + q \div(dz) + (q+1)D\right) \\
           &= \div(\Delta) - q \div (f_M) + \left(\sum_{i=0}^M \eps_i - q \right)\div(dz) + (M-q) D,
    \end{align*}
    as claimed.
\end{proof}

From now on we suppose that $q=3^n \geq 9$ and that $\X$ is $\Fqq$-maximal of genus
$$
    g(\X) = g_3 = \frac{q(q-1)}{6},
$$
so that $\X$ has the third largest possible genus for a maximal curve over $\mathbb{F}_{q^2}$. The following proposition collects some well-known facts about such a curve.

\begin{proposition}\label{prop:frob_generic_order_sequence}
    The Frobenius dimension of $\X$ is $r=3$, and the embedding associated with $\mathcal{D} = |(q+1)P_0|$ maps $\X$ to a smooth curve of degree $q+1$ lying on a non-degenerate Hermitian surface in $\mathbb{P}^3$. Moreover, the generic order sequence of this embedding is $(0,1,2,q)$.
\end{proposition}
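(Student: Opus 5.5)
We first pin down $r$, then show that the Natural Embedding Theorem gives $M=r$, and finally determine the orders.

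\emph{Step 1: $r=3$.} The first-largest and second-largest genera correspond to $r=2$. By Fuhrmann--Garcia--Torres, $r=2$ forces $g(\X)\in\{g_1,g_2\}$. For $q=3^n\geq 9$ we have $g_3=q(q-1)/6<g_2$, so $r\geq 3$. Conversely, Castelnuovo's genus bound for a non-degenerate curve of degree $q+1$ in $\mP^r$ decreases in $r$. For $r\geq 4$ it gives roughly $g\leq (q+1)^2/8$, which is below $g_3$ once $q\geq 9$. This is exactly the argument of Korchm\'aros--Torres that produced the bound $g_3$. Hence $r=3$.

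\emph{Step 2: embedding and smoothness.} By the Natural Embedding Theorem, $M\leq r=3$ and $\pi$ is birational onto a curve of degree $q+1$ on a non-degenerate Hermitian variety in $\mP^M$. If $M=2$, the plane image would have genus at most $q(q-1)/2$ with a curve-of-Hermitian-type structure, and the same Castelnuovo/Fuhrmann--Torres reasoning forces $g\in\{g_1,g_2\}$. Thus $M=3$, and $\pi$ is the full Frobenius embedding. Smoothness of the image is part of the Natural Embedding Theorem: the morphism given by $\mathcal{D}$ is an embedding, because $\mathcal{D}$ is base-point-free and separates points and tangents via $\mathcal{D}=|qP+\Phi(P)|$.

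\emph{Step 3: generic orders.} We already know $\eps_0=0$, $\eps_1=1$ and $\eps_3=q$, so only $\eps_2$ remains, with $2\leq\eps_2<q$. Suppose $\eps_2>2$. In characteristic $3$, the theory of orders (the $p$-adic criterion) implies $\eps_2$ is a power of $3$, since $\eps_2\geq 3$ and $2$ would then be a non-order. Then $\mathcal{D}$ is non-classical in a strong sense. In particular, the image has a $3$-th power-like structure: the osculating planes satisfy a Frobenius-type relation. Stöhr--Voloch then bounds the number of rational points. Counting via $\deg R$ gives $\deg R=(1+\eps_2+q)(2g-2)+4(q+1)$. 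This must be at least $\deg T=N$, since $E\geq 0$, which holds for all $\eps_2$. The sharper tool is the Frobenius order sequence $(0,1,q)$ ($\nu$'s, from maximality) with the bound $\nu_1(2g-2)+3(q+1)\geq$ contributions $\geq (q+1-\dots)N$. The cleaner route is the inequality $\eps_2\leq \nu_1$-type relations together with the Garcia--Homma result that for a non-degenerate curve in $\mP^3$ with $\eps_2=p^k>2$ the curve is Frobenius non-classical in a way that forces $g$ to be too large or too small. I expect this exclusion of $\eps_2\geq 3$ to be the main obstacle.

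If the direct Stöhr--Voloch count fails to give a contradiction, I would fall back on the explicit estimate. At a general point $P$, the osculating plane meets $\X$ with multiplicity $\geq \eps_2+1\geq 4$ along the whole curve. Projecting from a general point of $\X$ then yields a plane curve of degree $q$ whose order sequence is $(0,\eps_2-1,q-1)$. Applying Hefez--Voloch to this plane curve and comparing its genus with $g_3$ yields the contradiction. Hence $\eps_2=2$.
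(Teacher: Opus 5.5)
\textbf{Steps 1 and 2.} These follow the paper's route: Castelnuovo's bound plus the Natural Embedding Theorem give $r=M=3$ and a smooth image of degree $q+1$ on a non-degenerate Hermitian surface. There are two slips to fix.
\begin{itemize}
\item It is not true that the second largest genus corresponds to $r=2$. Only the Hermitian curve has $r=2$, because a degree $q+1$ plane curve lying on the Hermitian curve is the Hermitian curve. The $g_2$-curve has $r=3$ and lies on a quadric cone. Your conclusion $r\neq 2$ still holds, since $r=2$ forces $g=g_1$.
\item The bound $(q+1)^2/8$ is not Castelnuovo's bound for $r\geq 4$; it is essentially the Halphen-type bound for space curves not on a cubic. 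It would not even suffice: at $q=9$ it gives $12.5>12=g_3$. The correct computation is as follows. For $r=4$ and $d=q+1$ we have $d-1=3\cdot\frac{q}{3}$, so Castelnuovo gives $g\leq 3\binom{q/3}{2}=\frac{q(q-3)}{6}<g_3$, and larger $r$ only lowers the bound.
\end{itemize}

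\textbf{Step 3 has a genuine gap.} This step is the real content of the claim that the generic order sequence is $(0,1,2,q)$. Your $p$-adic reduction to $\varepsilon_2\in\{2\}\cup\{3^k\}$ is correct, but nothing afterwards excludes $\varepsilon_2=3^k$ with $3\leq 3^k\leq q/3$.
\begin{itemize}
\item The count $\deg R\geq\deg T$ gives no contradiction: $3(\deg R-\deg T)=3^k(q^2-q-6)-2q^2+5q+3>0$.
\item The Frobenius St\"ohr--Voloch bound with Frobenius orders $(0,1,q)$ does not involve $\varepsilon_2$ at all.
\item The appeal to an unspecified Garcia--Homma-type statement ``forcing $g$ to be too large or too small'' is not an argument.
\item The fallback is miscomputed. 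The linear series cut out by planes through a general point $P_1\in\X$ has generic order sequence $(0,1,\varepsilon_2)$: at a general $Q$, the plane spanned by $P_1$ and the tangent line at $Q$ meets $\X$ at $Q$ with multiplicity exactly $\varepsilon_2$. The sequence $(0,\varepsilon_2-1,q-1)$ you wrote is the order sequence only at the centre $P_1$ itself.
\item No Hefez--Voloch genus comparison is actually carried out. That result counts rational points of Frobenius non-classical plane curves, and you give no computation leading to a contradiction with $g=g_3$.
\end{itemize}
As written, the proposal does not establish $\varepsilon_2=2$. The paper does not prove this step from scratch either; it imports it from \cite[Remark 2.7]{oliveiraSpaceMaximalCurves2019}. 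You need either that input or a genuine argument excluding $\varepsilon_2=3^k$.
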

\begin{proof}
    The fact that $r=M=3$ and that $\mathcal{D}$ maps $\X$ to a smooth curve of degree $q+1$ lying on a non-degenerate Hermitian surface in $\mathbb{P}^3$ are direct consequences of Castelnuovo's bound \cite[Corollary 10.25]{hirschfeldAlgebraicCurvesFinite2008} and the Natural Embedding theorem \cite{Korchmáros_Torres_2001}. The generic order sequence $(0,1,2,q)$ is obtained from \cite[Remark 2.7]{oliveiraSpaceMaximalCurves2019}.
\end{proof}

We fix a basis $(f_0, f_1, f_2, f_3)$ of $\mathcal{L}(D)$ such that 
$$
    \pi = (f_0:f_1:f_2:f_3): \X \to \mathbb{P}^3
$$
is the embedding described above, and assume that the Hermitian surface is in its canonical form, i.e., that
$$
    f_0^{q+1} + f_1^{q+1} + f_2^{q+1} + f_3^{q+1} = 0.    
$$

In \cite[Question 2.10]{oliveiraSpaceMaximalCurves2019} the authors ask whether characteristic zero generalizations of Castelnuovo's and Halphen's results also hold in positive characteristic. A version of this question was answered by Macrì and Schmidt in \cite[Theorem 1.1 and p. 154]{Macri_Schmidt_2020}, and their results imply that $\pi(\X)$ is contained in a cubic:

\begin{proposition}\label{prop:contained_in_cubic}
    The curve $\pi(\X) \subseteq \mP^3$ is contained in a unique absolutely irreducible cubic surface defined over $\Fqq$. 
\end{proposition}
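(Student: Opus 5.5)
We will deduce the statement from the Castelnuovo--Halphen type results of Macrì and Schmidt \cite[Theorem 1.1 and p. 154]{Macri_Schmidt_2020}. By Proposition \ref{prop:frob_generic_order_sequence}, $\pi(\X)$ is a smooth, non-degenerate, irreducible curve of degree $d=q+1$ in $\mP^3$. Its genus is $g_3=q(q-1)/6$.

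The first step is numerical. Castelnuovo's bound for space curves of degree $d$ is $\pi_0(d)=\lfloor (d-2)^2/4\rfloor$, attained only by curves on quadrics. Halphen's bound for curves not contained in a quadric is $\pi_1(d)\approx d^2/6-d/2+1$. For $d=q+1$ with $q\equiv 0\pmod 3$ we have $d\equiv 1 \pmod 3$, and Halphen's formula gives
$$
\pi_1(q+1)=\frac{(q+1)^2}{6}-\frac{q+1}{2}+1-\frac{2}{3}\cdot\frac{1}{2}\cdot\ldots
$$
After the standard rounding this evaluates to exactly $q(q-1)/6$. We will check this explicitly: $\pi_1(d)=\lfloor d^2/6-d/2\rfloor+1$, and for $d=q+1$ this equals $(q^2-q)/6$, since $(q^2-q)/6$ is an integer when $3\mid q$. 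Thus the genus is at least $\pi_1(d)$. It is strictly less than $\pi_0(d)=\lfloor (q-1)^2/4\rfloor$, because $q\ge 9$.

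The second step is to exclude quadrics. A curve of genus $g>\pi_1(d)$, or in the extremal case, would lie on a quadric only if its genus matched a quadric bound. For a curve on a smooth quadric of bidegree $(a,b)$ with $a+b=q+1$, the genus is $(a-1)(b-1)$. We will check that $(a-1)(b-1)=q(q-1)/6$ has no integer solution with $a+b=q+1$. Equivalently, we need $ab-q=q(q-1)/6$, that is $ab=q(q+5)/6$. The discriminant condition is $(q+1)^2-4\cdot q(q+5)/6$, which is not a perfect square for $q=3^n$. We will verify this by reducing modulo a small prime or by a size argument. Curves on quadric cones or on singular quadrics can be handled similarly, using the known genus formulas. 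Alternatively, we can use that the Frobenius dimension is $3$ with generic orders $(0,1,2,q)$, together with the result of Korchmáros--Torres that a maximal curve on a quadric has genus $g_2$ or comparable.

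The third step invokes Macrì--Schmidt. In positive characteristic, a curve not on a quadric whose genus attains the Halphen bound $\pi_1(d)$, for $d$ sufficiently large (which holds since $q\ge 9$), lies on a cubic surface. That cubic is unique: two distinct cubics would meet in a curve of degree $9<q+1$, so they cannot both contain $\pi(\X)$.

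The cubic is absolutely irreducible. If it were reducible, $\pi(\X)$ would lie on a plane or a quadric component, contradicting non-degeneracy or the second step.

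Rationality over $\Fqq$ follows by uniqueness. The Frobenius image of the cubic also contains $\pi(\X)$, because $\pi$ is defined over $\Fqq$, so it equals the cubic itself. Equivalently, $H^0(\mathcal I(3))$ is a one-dimensional space defined over $\Fqq$.

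The main obstacle is checking precisely that Macrì--Schmidt's hypotheses on $d$ and the characteristic are satisfied. The quadric exclusion (the Diophantine check) is secondary.
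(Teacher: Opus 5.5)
Your outline matches the paper's for $q\ge 27$: rule out planes and quadrics, obtain a cubic from Macr\`{\i}--Schmidt, and then deduce absolute irreducibility, uniqueness (two cubics meet in degree $9<q+1$) and $\Fqq$-rationality via Frobenius exactly as the paper does. The gap is the case $q=9$, where your claim that the size hypothesis ``holds since $q\ge 9$'' is false.

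In positive characteristic the cubic comes from Macr\`{\i}--Schmidt's bound $g\le d^2/8+1$ for curves lying on no surface of degree $\le 3$. This is their Theorem 1.1 with $k=4$, which also carries the degree hypothesis $d>k(k-1)=12$. For $q=9$ you have $d=10$. Even granting the bound, $d^2/8+1=13.5>12=g_3$, so nothing is contradicted. In general $q(q-1)/6>(q+1)^2/8+1$ amounts to $q^2-10q-27>0$, which among powers of $3$ first holds at $q=27$.

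The missing ingredient for $q=9$ is an elementary count. Since $\deg 3D=30>22=2g_3-2$, Riemann--Roch gives $\ell(3D)=19$, while cubic forms in four variables form a $20$-dimensional space. Hence $G\mapsto G(f_0,f_1,f_2,f_3)$ has a nonzero kernel, which is a cubic containing $\pi(\X)$.

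Your primary (numerical) quadric exclusion also fails exactly at $q=9$. The discriminant you write down is $(q-1)(q-3)/3$, which equals $16$ there. Indeed, a curve of bidegree $(3,7)$ on a smooth quadric has degree $10$ and genus $(3-1)(7-1)=12=g_3$. So degree and genus alone cannot exclude quadrics. You need the maximal-curve input you mention as an alternative, namely Korchm\'aros--Torres \cite[Theorem 1]{korchmarosGenusMaximalCurve2002}, which is what the paper uses for all $q$. With these two repairs your argument becomes the paper's proof.
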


\begin{proof}
    It follows from \cite[Theorem 1]{korchmarosGenusMaximalCurve2002} that $\pi(\X)$ does not lie on a quadric, and linear independence of the $f_i$'s guarantees that it does not lie on a hyperplane. For $q \geq 27$ it follows from \cite[Theorem 1.1]{Macri_Schmidt_2020} that $\pi(\X)$ lies on a cubic since otherwise we would have $g(\X) \leq \frac{(q+1)^2}{8}+1$, which is false. For $q=9$, the Riemann--Roch Theorem yields
    $$
        \dim \mathcal{L}(3D) = 30 + 1 - 12 = 19,
    $$
    since $\deg (3D) = 30 > 22 =  2g_3-2$, while $\Fqq[X_0,X_1,X_2,X_3]_3$, the $\Fqq$-space of degree three homogeneous polynomials in four variables, has dimension $\binom{6}{3} = 20$. Hence, the $\Fqq$-linear map 
    $$
        \mu : 
        \begin{cases}
            \Fqq[X_0,X_1,X_2,X_3]_3 &\longrightarrow \hfil \mathcal{L}(3D) \\
            \hfil G  &\longmapsto \ G(f_0,f_1,f_2,f_3)
        \end{cases}
    $$
    has a non-trivial kernel, which gives rise to a cubic containing $\pi(\X)$ also for $q=9$.

    In either case, we claim that the cubic is absolutely irreducible, since $\pi(\X)$ would otherwise be contained in a quadric. Further $\pi(\X)$ cannot be contained in two distinct cubic surfaces, since their intersection would have degree at most $9$, which is strictly less than $\deg \pi(\X) = q+1$. This uniqueness result implies that the cubic is defined over $\Fqq$, since acting with the Frobenius would otherwise give a different cubic surface also containing $\pi(\X)$.
\end{proof}

\section{St\"ohr--Voloch theory and a special rational point}

We keep the notation and assumptions from the previous section, i.e., $q = 3^n \geq 9$ and $\X$ is $\Fqq$-maximal of genus
$$
    g(\X) = g_3 = \frac{q(q-1)}{6}.
$$
The Hermitian and cubic equations guaranteed by Propositions \ref{prop:frob_generic_order_sequence} and \ref{prop:contained_in_cubic} will be the starting point for showing that $\X$ is birationally equivalent to the known third largest genus curve. However, we need to know more about the embedding $\pi = (f_0:f_1:f_2:f_3)$ first. In the following, we will use St\"ohr--Voloch theory to study this embedding. \newline

\subsection{The ramification divisor}

Recall that the ramification divisor with respect to $\pi$ in the sense of \cite{stohrWeierstrassPointsCurves1986} is given by 
\begin{align*}
    R := \div (w) + \left(\sum_{i=0}^3 \eps_i\right) \div(dz) + (3+1)D = \div(w) + (q+3) W + 4D,
\end{align*}
where
$$
    w = \det\left( D_z^{(\varepsilon_i)}f_j\right)_{i,j = 0, \dots, 3},
$$ 
$z$ is a separating variable, $W = \div(dz)$ and $D = (q+1)P_0$ for a fixed $\Fqq$-rational point $P_0$. 
Moreover, from Corollary \ref{cor:divisor_E} we have
$$E=\div(\Delta)-q\div(f_3)+3\div(dz)-(q-3)D.$$
As mentioned in Section \ref{sec:preliminaries}, $v_P(R) \geq 1$ for all $\Fqq$-rational $P$, so 
$$
    E = R - T \geq 0,
$$
where $T$ is the sum of all $\Fqq$-rational points. The first important result in this section is that $E$ is three times another effective divisor. 

\begin{lemma}\label{lemma:divisor_A}
    There is an effective divisor $A$ of degree $\frac{q}{3}-1$ such that 
    $$
        3A = E.
    $$
    Moreover, $\ell(A)=1$ so $A$ is the unique effective divisor in its divisor class, and $A$ satisfies
    $$
        A \sim W - \left(\frac{q}{3}-1\right)D.
    $$
\end{lemma}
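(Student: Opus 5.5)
The plan has three parts: show that $E$ is divisible by $3$ using the formula for $E$ from Corollary \ref{cor:divisor_E}, get the degree of $A$ from a degree count, and deduce $\ell(A)=1$ from a point-counting bound.

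\emph{Divisibility by $3$.} With $M=3$ and generic orders $(0,1,2,q)$, the formula specializes to
$$
E=\div(\Delta)-q\div(f_3)+3\div(dz)-(q-3)D .
$$
Since $q=3^n$, the coefficients $q$, $3$ and $q-3$ are all divisible by $3$. So it suffices to show that $\div(\Delta)$ is divisible by $3$, where
$$
\Delta=\det\bigl(D_z^{(i)}f_j\bigr)_{i=0,1,2;\ j=0,1,2}.
$$

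\begin{itemize}
\item In characteristic $3$ we have $D_z^{(1)}=d/dz$ and $D_z^{(2)}=\tfrac12\, d^2/dz^2$, since $2$ is invertible. Hence $\Delta=\tfrac12\det(f,f',f'')$.
\item Differentiating a determinant row by row, the only surviving term of $\Delta'$ is $\tfrac12\det(f,f',f''')$.
\item We have $f'''=3!\,D_z^{(3)}f=0$, so $\Delta'=0$.
\item Since $z$ is separating, the kernel of $d/dz$ on $F$ is $F^3$. Therefore $\Delta=h^3$ for some $h\in F$.
\end{itemize}

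Then set
$$
A:=\div(h)-\tfrac{q}{3}\div(f_3)+\div(dz)-\bigl(\tfrac q3-1\bigr)D .
$$
By construction $3A=E$. Since $E\ge 0$, $A$ is effective. Reading off the divisor classes gives $A\sim W-(\tfrac q3-1)D$. This is the step I regard as the key point; the rest is bookkeeping.

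\emph{Degree.} Using $\deg D=q+1$ and $2g-2=q(q-1)/3-2$, compute
$$
\deg R=(q+3)(2g-2)+4(q+1)=\frac{q^3+2q^2+3q-6}{3}.
$$
By maximality,
$$
\deg T=q^2+1+2gq=\frac{q^3+2q^2+3}{3}.
$$
Hence $\deg E=q-3$, and so $\deg A=\tfrac q3-1$.

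\emph{$\ell(A)=1$.} Suppose $\ell(A)\ge2$. Then there is a nonconstant $x\in F$ with $\deg x\le \tfrac q3-1$. Every $\Fqq$-rational place lies over one of the $q^2+1$ rational places of $\Fqq(x)$, so
$$
q^2+1+\frac{q^2(q-1)}{3}\le\Bigl(\frac q3-1\Bigr)(q^2+1).
$$
The left side equals $\tfrac{q^3+2q^2+3}{3}$ and the right side equals $\tfrac{q^3-3q^2+q-3}{3}$, so the left side is strictly larger, a contradiction. Hence $\ell(A)=1$, and $A$ is the unique effective divisor in its class.
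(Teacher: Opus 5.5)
Your proposal is correct and follows the paper's proof essentially step for step. You show that $\Delta$ is a cube because its derivative vanishes, define $A$ by the same formula, and rule out $\ell(A)\ge 2$ by the same count of rational places over $\Fqq(x)$; the only cosmetic differences are that you pass to ordinary derivatives via $D_z^{(2)}=\tfrac12 (d/dz)^2$, and you obtain $\deg A$ from $\deg R-\deg T$ rather than reading it directly off the formula for $A$.
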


\begin{proof}
    The existence of a divisor $A$ with $3A = E$ follows from Corollary \ref{cor:divisor_E} if we can show that $\Delta$ is a cube, where 
    $$
        \Delta = \det
        \begin{pmatrix}
            f_0 & f_1 & f_{2} \\
            D_z^{(1)}f_0 & D_z^{(1)} f_1 & D_z^{(1)} f_{2}\\
            D_z^{(2)}f_0 & D_z^{(2)} f_1 & D_z^{(2)} f_{2} 
        \end{pmatrix}.
    $$
    Since the characteristic of $F$ is $3$, $\Delta$ is a cube if and only if $d \Delta = 0$ (see \cite[Proposition 3.10.2 (d) and Proposition 4.1.8 (c)]{stichtenothAlgebraicFunctionFields2009}). We have $d \Delta = (D_z^{(1)}\Delta) dz$, so $\Delta$ is a cube if and only if $D_z^{(1)} \Delta = 0$. Using standard properties of Hasse derivatives, we see that 
    \begin{align*}
        D_z^{(1)} \Delta &= \det
        \begin{pmatrix}
            D_z^{(1)}f_0 & D_z^{(1)} f_1 & D_z^{(1)} f_{2} \\
            D_z^{(1)}f_0 & D_z^{(1)} f_1 & D_z^{(1)} f_{2}\\
            D_z^{(2)}f_0 & D_z^{(2)} f_1 & D_z^{(2)} f_{2} 
        \end{pmatrix} 
        + \det
        \begin{pmatrix}
            f_0 & f_1 & f_{2} \\
            2D_z^{(2)}f_0 & 2D_z^{(2)} f_1 & 2D_z^{(2)} f_{2}\\
            D_z^{(2)}f_0 & D_z^{(2)} f_1 & D_z^{(2)} f_{2} 
        \end{pmatrix} \\
        & \quad+ \det
        \begin{pmatrix}
            f_0 & f_1 & f_{2} \\
            D_z^{(1)}f_0 & D_z^{(1)} f_1 & D_z^{(1)} f_{2}\\
            3D_z^{(3)}f_0 & 3D_z^{(3)} f_1 & 3D_z^{(3)} f_{2} 
        \end{pmatrix}. 
    \end{align*}

    The first two determinants are zero since there are linearly dependent rows and the third determinant is zero since the third row is zero. We conclude that $D_z^{(1)} \Delta = 0$, so $\Delta \in F^3$, i.e., there exists some $\delta \in F$ such that 
    $$
        \Delta = \delta^3.
    $$
    Now Corollary \ref{cor:divisor_E} implies that
    $$
        E = 3 \left( \div (\delta) - \frac{q}{3} \div (f_3) + \div(dz) - \left(\frac{q}{3}-1\right)D\right),
    $$
    so
    $$
        A := \div (\delta) - \frac{q}{3} \div (f_3) + \div(dz) - \left(\frac{q}{3}-1\right)D
    $$
    works. It follows immediately that $\deg A = (2g-2) - (q/3 - 1)(q+1) = q/3 - 1$, and 
    $$
        A \sim W - \left(\frac{q}{3}-1\right) D.
    $$
    Finally, we prove that $\ell(A) = 1$. If $\ell(A)>1$ then $\mathcal{L}(A)$ contains a non-constant function $f$. Let $d$ be the degree of the pole divisor of $f$. Then, since $A$ is effective, $d\leq \deg A $ and $d = [F : \Fqq(f)]$. Each $\Fqq$-rational place of $F$ lies above one of the $q^2+1$ rational places of $\Fqq(f)$, and there are at most $d$ rational places above each one. Using maximality of $F$ we get 
    $$
        \frac{q^3+2q^2+3}{3} = q^2 + 1 + 2g(F)q \leq d(q^2+1) \leq \deg(A) (q^2+1) = \frac{q^3 - q(3q-1)-3}{3},
    $$
    which is a contradiction. 
\end{proof}

\subsection{Orders at non-rational points}

The goal of this section is to show that all non-$\Fqq$-rational points have $\mathcal{D}$-order sequence $(0,1,2,q)$. A direct consequence of this is that $v_P(R) = 0$ for all non-$\Fqq$-rational $P$, i.e., $R$ and hence $E$ and $A$ are supported on the $\Fqq$-rational points of $\X$.

For the rest of this section, let $P$ be a non-$\Fqq$-rational point of $\X$. By \cite[Proposition 10.6]{hirschfeldAlgebraicCurvesFinite2008} the order sequence is of the form $(0,1,j_2,q)$, and our claim is that $j_2 = 2$. The sequence of non-gaps at $P$, i.e., elements of the Weierstrass semigroup $H(P)$, begins with
$$
    0 < m_1(P) < m_2(P) \leq q < m_3(P)
$$
(see \cite[Equation 10.9]{hirschfeldAlgebraicCurvesFinite2008}). More precisely, by \cite[Proposition 10.9 (XIV)]{hirschfeldAlgebraicCurvesFinite2008} we have $m_2(P) = q$ and \cite[Proposition 10.6 (XI)]{hirschfeldAlgebraicCurvesFinite2008} then implies
$$
    m_1(P) \in \{q-1,q-j_2\},
$$
since $q-m_1(P)$ is an order. We will show that neither option can occur if $j_2 > 2$. The key is the following lemma which gives us a way to construct a number of different gaps depending on the contribution of $P$ to $A$.

\begin{lemma}\label{lemma:gaps_at_non_rational}
    Let $P$ be a non-$\Fqq$-rational point with $j :=j_2(P) > 2$, and define 
    $$
        \Sigma_j := \left\lbrace e_1 + e_2\cdot j + e_3\cdot q \mid e_1 + e_2 + e_3 \leq \left(\frac{q}{3}-1\right), e_i\geq0 \right\rbrace.
    $$
    Then,
    $$
        v_P(A) + t + 1 \ \text{ is a gap at } \ P 
    $$
    for all $t \in \Sigma_j$.
\end{lemma}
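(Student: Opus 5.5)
The plan is to use the standard correspondence between gaps and orders of the canonical series: an integer $n\geq 1$ is a gap at $P$ if and only if there is a regular differential $\omega$ on $\overline{\F}_{q^2}F$ with $v_P(\omega)=n-1$. So it suffices to produce, for every $t\in\Sigma_j$, a regular differential $\omega_t$ with
$$
v_P(\omega_t)=v_P(A)+t .
$$
We work throughout over $\overline{\F}_{q^2}$. This is harmless, since gaps and the genus are unchanged by the constant field extension.

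The source of these differentials is Lemma \ref{lemma:divisor_A}, which gives $A\sim W-\left(\frac{q}{3}-1\right)D$. Hence for every effective divisor $B\sim\left(\frac{q}{3}-1\right)D$, the divisor $A+B$ is an effective canonical divisor, and so it is the divisor of a regular differential. We get such divisors $B$ from homogeneous polynomials of degree $\frac{q}{3}-1$ in the $f_i$.

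To realise the prescribed valuations, use the $\mathcal{D}$-order sequence $(0,1,j,q)$ at $P$. After a linear change of basis over $\overline{\F}_{q^2}$, we get a basis $g_0,g_1,g_2,g_3$ of $\mathcal{L}(D)$ adapted to $P$. In terms of $D_P:=D+\div(g_0)$, which is an effective divisor linearly equivalent to $D$ and not containing $P$ in its support, this basis satisfies
$$
v_P(g_i/g_0)=0,\,1,\,j,\,q \qquad (i=0,1,2,3).
$$
For $e_1,e_2,e_3\geq 0$ with $e_1+e_2+e_3\leq \frac q3-1$, set $e_0:=\frac q3-1-e_1-e_2-e_3$ and consider the monomial $m:=g_0^{e_0}g_1^{e_1}g_2^{e_2}g_3^{e_3}$. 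Then
$$
B:=\div\!\left(m/g_0^{\frac q3-1}\right)+\left(\tfrac q3-1\right)D_P
$$
is effective, because each $g_i/g_0\in\mathcal{L}(D_P)$. It is linearly equivalent to $\left(\frac q3-1\right)D$, and
$$
v_P(B)=e_1+e_2 j+e_3 q=t .
$$
Therefore $A+B=\div(\omega_t)$ for some regular differential $\omega_t$, and $v_P(\omega_t)=v_P(A)+t$. Consequently $v_P(A)+t+1$ is a gap. Note that different monomials may give the same $t$; this is irrelevant, since we only need each value to be attained once.

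The only delicate points are bookkeeping ones, and I expect them to be the main obstacle. First, the identification $A\sim W-\left(\frac q3-1\right)D$ must be used with $W$ the divisor of an actual differential; Lemma \ref{lemma:divisor_A} provides exactly this, since $W=\div(dz)$. Second, I must check that the adapted basis with valuations $0,1,j,q$ exists at a non-rational $P$, which is the definition of the $\mathcal{D}$-orders at $P$. The hypothesis $j>2$ does not seem to be needed for this construction itself. It only matters later, when these gaps are compared with the non-gaps $m_1(P)\in\{q-1,q-j\}$ and $m_2(P)=q$.
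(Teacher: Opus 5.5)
Your proposal is correct and follows essentially the same argument as the paper: pick an adapted basis $g_0,g_1,g_2,g_3$ with $P$-valuations $0,1,j,q$, note that $A+\left(\frac q3-1\right)D+\div\left(g_1^{e_1}g_2^{e_2}g_3^{e_3}\right)$ is an effective canonical divisor with valuation $v_P(A)+t$ at $P$, and conclude via the gap/regular-differential criterion. The only difference is cosmetic. You normalise via $D_P=D+\div(g_0)$, whereas the paper takes $g_0=1$ directly, which it may do because $P\notin\operatorname{supp}D=\{P_0\}$ for non-rational $P$; and you are right that $j>2$ plays no role in this step.
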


\begin{proof}
    Suppose $P$ is a non-$\Fqq$-rational point with $j:= j_2(P) > 2$. The order sequence at $P$ is $(0,1,j,q)$, so we can find $\overline{\F}_{q^2}$-linear combinations $g_0 = 1,g_1,g_2,g_3$ of $f_0,f_1,f_2,f_3 \in \mathcal L(D)$ with 
    $$
        v_P(g_1)=1, \quad v_P(g_2) = j \quad \text{and} \quad v_P(g_3) = q.
    $$
    Let $t$ be any element of $\Sigma_j$ and let $e_1,e_2,e_3$ be nonnegative integers satisfying $e_1 + e_2 + e_3 \leq \left(\frac{q}{3}-1\right)$ with $t = e_1 + e_2\cdot j + e_3\cdot q$. We consider the function 
    $$
        f := g_1^{e_1}g_2^{e_2}g_3^{e_3} \in \mathcal{L}\left( \left(\frac{q}{3}-1\right)D\right).
    $$
    By Lemma \ref{lemma:divisor_A} we have that
    $$
        A+ \left(\frac{q}{3}-1\right)D + (f)
    $$
    is an effective canonical divisor. Since the valuation at $P$ of this divisor is exactly $v_P(A) + t$, it follows from \cite[Corollary 14.2.5]{villasalvadorTopicsTheoryAlgebraic2006} that $v_P(A) + t + 1$ is a gap, as claimed.
\end{proof}

We now prove the main result of this section. 

\begin{theorem}
    All non-$\Fqq$-rational points of $\X$ have order sequence $(0,1,2,q)$. In particular, $\operatorname{supp} A \subseteq \X(\Fqq)$.
\end{theorem}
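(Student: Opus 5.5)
The plan is to argue by contradiction. Suppose some non-$\Fqq$-rational point $P$ has $j := j_2(P) > 2$. We will exhibit an element $t \in \Sigma_j$ such that $v_P(A)+t+1$ lies in $H(P)$. This contradicts Lemma \ref{lemma:gaps_at_non_rational}.

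\textbf{Step 1: restrict $j$.} Before using the lemma, we first narrow down $j$ with the $p$-adic criterion of St\"ohr--Voloch (\cite[Corollary 1.9]{stohrWeierstrassPointsCurves1986}, or the corresponding statement in \cite{hirschfeldAlgebraicCurvesFinite2008}). If $j$ is a $\D$-order at $P$ and $\binom{j}{i}\not\equiv 0 \pmod 3$, then $i$ is also an order. The orders at $P$ are only $0,1,j,q$, so every $i$ with $\binom{j}{i}\not\equiv 0 \pmod 3$ must lie in $\{0,1,j\}$. By Lucas' theorem this forces $j$ to be a power of $3$. Since $2<j<q$, we get $3\le j\le q/3$.

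\textbf{Step 2: set up the target.} Write $a := v_P(A)$. Since $A$ is effective of degree $q/3-1$, we have $0\le a\le q/3-1$. From the discussion before the lemma, $m_2(P)=q$ and $m_1(P)\in\{q-1,q-j\}$. Hence $q$, $m_1$ and all their nonnegative combinations (for instance $2q$, $q+m_1$, $2m_1$) are non-gaps.

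We look for a non-gap $h$ and a representation
$$h-a-1=e_1+e_2 j+e_3 q, \qquad e_i\ge 0,\quad e_1+e_2+e_3\le \frac q3-1.$$

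\textbf{Step 3: the main choice.} First try $h=q$, so that $t=q-1-a\ge 2q/3$. We need a representation of $t$ with few summands. Take $e_3=0$, let $e_2=\lfloor t/j\rfloor$, and let $e_1<j$ be the remainder. The cost is then $e_1+e_2\le t/j+j-1$.

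For $j=3$ this is roughly $q/3+1$, which overshoots by a small constant. So in the boundary cases we will instead use the other non-gaps:
\begin{itemize}
\item the target $m_1$, giving $t=q-2-a$ or $t=q-j-1-a$;
\item the target $q+m_1$;
\item the target $2q$, using one $e_3$.
\end{itemize}
We choose whichever target gives a residue mod $j$ that keeps $e_1$ small. For large $j$ (up to $q/3$), $e_2$ is at most $3$ and the bound $e_1<j$ is wasteful. There we exploit the freedom in $a$ together with the two possibilities for $m_1$: for example, if $m_1=q-j$ we take $t=q-j-1-a$ and subtract multiples of $j$.

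This case bookkeeping over $j\in\{3,9,\dots,q/3\}$, over $m_1\in\{q-1,q-j\}$, and over small $q$ (notably $q=9$, where $j=3$ is the only option and can be checked by hand) is the main obstacle. I expect to handle it by choosing the target $h$ so that $h-a-1$ is congruent mod $j$ to a small residue.

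\textbf{Step 4: conclude.} Once $j_2(P)=2$ holds for every non-rational $P$, the order sequence at each such $P$ equals the generic order sequence $(0,1,2,q)$. Hence $v_P(R)=0$ by \cite[Theorem 1.5]{stohrWeierstrassPointsCurves1986}. Therefore $v_P(E)=v_P(R)-v_P(T)=0$, and since $E=3A$ we get $v_P(A)=0$. This gives $\operatorname{supp}A\subseteq \X(\Fqq)$.
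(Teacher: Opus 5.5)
Your Step 1 is where the argument breaks. The $p$-adic criterion \cite[Corollary 1.9]{stohrWeierstrassPointsCurves1986} is about the generic $\D$-orders $\varepsilon_0,\dots,\varepsilon_3$. It is false in general for the orders $j_i(P)$ at a given point, and that is how you use it. It already fails here. At every $\Fqq$-rational point the order sequence is $(0,1,j_2,q+1)$ (Proposition \ref{prop:j_2_rational}) and $\binom{q+1}{q}\equiv 1 \pmod 3$, yet $q$ is not an order. At the special point of Corollary \ref{cor:special_place}, $\binom{q/3+1}{q/3}\equiv 1 \pmod 3$, but $q/3$ is not an order. Nothing you cite gives the criterion at non-rational points, so the reduction to $j\in\{3,9,\dots,q/3\}$ is unproved. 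It would also make the cases $j=(q\pm1)/2$, the hardest part of the paper's proof, disappear for free.

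The paper restricts $j$ using two inputs your plan never touches. First, $v_P(E)\ge j-2$ holds at $P$ and at its Frobenius conjugate, and $\deg E=q-3$, so $j\le (q+1)/2$. Second, rewrite the cubic of Proposition \ref{prop:contained_in_cubic} in a basis $1,g_1,g_2,g_3$ adapted to $P$. Two monomials $g_1^ag_2^bg_3^c$ with $a+b+c\le 3$ must then share the minimal valuation, which leaves $j\in\{3,q/3,(q-1)/2,(q+1)/2\}$.

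Even with a valid restriction of $j$, your Step 3 mechanism cannot finish. Take $j=(q-1)/2$ and $q\ge 27$. Then $v_P(A)=(q-3)/6$, since $3\mid v_P(E)$, $v_P(E)\ge (q-5)/2$ and $2v_P(E)\le q-3$. Subtracting $v_P(A)+1$ from $q-1$, $q-j$ and $q$ gives $(5q-9)/6$, $q/3$ and $(5q-3)/6$. None of these lies in $\Sigma_j$, since each needs at least $q/3$ summands. The same happens for $q+m_1(P)$, $2q$ and $2m_1(P)$. So Lemma \ref{lemma:gaps_at_non_rational} cannot exclude this case, and the paper uses a different idea. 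The monomials $g_1^ag_2^bg_3^c$ with $a+b+c\le q/3-1$ and ($a=0$ or $b\le 1$) have distinct valuations at $P$ and already fill $\mathcal{L}((q/3-1)D)$. Yet $fg_2^4$, with $(f)_\infty=qP$, lies in that space and has the missing valuation $q-2$. For $q=9$ a separate linear-equivalence argument ($A=P+P'$, R\"uck's lemma, $qP+P'\sim D$) gives $4\in H(P)$.

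If $j$ were known to be a power of $3$, your bookkeeping would work. For $9\le j\le q/9$ the crude bound $q/j+j-1\le q/3-1$ makes both $q-1$ and $q-j$ gaps. The cases $j=3$ and $j=q/3$ go as in the paper, the latter using $v_P(A)<q/3-1$ from the Frobenius conjugate. Step 4 is also fine. But the missing restriction on $j$ is the heart of the proof.
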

\begin{proof}
    Suppose a non-$\Fqq$-rational point $P$ has order sequence $(0,1,j,q)$ with $j>2$. By \cite[Theorem 1.5]{stohrWeierstrassPointsCurves1986} this means $P$ is in the support of $A$. \newline

    \emph{Claim:} $j \in \left\{3,\frac{q}{3},\frac{q-1}{2},\frac{q+1}{2} \right\}$ \newline

    First note that $\deg(R-T)= \deg(E) = 3\deg(A) = q-3$, and $v_P(E) \geq j-2$ by \cite[Theorem 1.5]{stohrWeierstrassPointsCurves1986}. Moreover, acting with the Frobenius on $P$ gives at least one other non-$\Fqq$-rational point, say $Q$, for which it also holds that  $v_Q(E) \geq j-2$. We conclude from this that $2(j-2)\leq q-3$, i.e.,
    $$
        j\leq \frac{q+1}{2}.
    $$
    As in the proof of Lemma \ref{lemma:gaps_at_non_rational}, we choose a basis $(g_0 =1,g_1,g_2,g_3)$ of $\mathcal{L}(D)$ with $v_P(g_1)=1$, $v_P(g_2)=j$ and $v_P(g_3)=q$. Since $\pi(\X)$ is contained in a cubic by Proposition \ref{prop:contained_in_cubic} there is a cubic relation among the $f_i$'s, and writing each $f_i$ in terms of the new basis gives a cubic relation among the $g_i$'s. A term in this relation is of the form $g_1^ag_2^bg_3^c$ with $0\leq a+b+c\leq 3$, and such a term has valuation $a + bj + cq$ at $P$. The smallest occurring valuation must cancel, i.e., there must be at least two terms with this valuation. 

    Suppose $a + bj + cq = a' + b'j + c'q$ for some $a,b,c,a',b',c' \geq 0$ with $a+b+c \leq 3$ and $a'+b'+c' \leq 3$ and assume that $(a,b,c) \neq (a',b',c')$. If $c = c'$ then 
    $$
        j|b'-b| = |a-a'| \leq 3,
    $$
    which is a contradiction except for $j=3$, since $b'=b$ would imply also $a'=a$.

    If $|c-c'|\geq 2$, that either $|b-b'|<3$ or $|a-a'|<3$. Indeed, if $|b-b'|=3$ and $|a-a'|=3$, then necessarily $c=c'=0$. Hence we obtain 
    $$
        2q \leq q |c-c'| = |(a'+b'j)-(a+bj)|\leq \frac{3}{2}(q+1)+2,
    $$
    which implies $q \le 7$, a contradiction. Hence, we may assume without loss of generality that $c = c'+1$, so that 
    $$
        a + bj + q = a' + b'j.
    $$
    If $b'\leq 1$ then $q \le a' + b'j \le  2+\frac{q+1}{2}$, a contradiction. If $b' = 2$ then $b=2$ and $b=1$ give contradictions quickly, but with $b=0$ we obtain the valid options $j=\frac{q+1}{2}$ and $j= \frac{q-1}{2}$. Similarly, if $b'=3$ then $a'=0$ and we find that the only valid option is $j=\frac{q}{3}$. The claim follows. \newline

    Next, we exclude the options $j=3$, $j=q/3$ and $j=(q+1)/2$ one at a time using Lemma \ref{lemma:gaps_at_non_rational} and $m_1(P)\in \{q-1,q-j\}$. Suppose first that $j=3$. By Lemma \ref{lemma:gaps_at_non_rational} $v_P(A)+t+1$ is a gap at $P$ for all 
    \begin{align*}
        t &\in \Sigma_3 =\left\lbrace e_1 + e_2\cdot 3 + e_3 \cdot q \mid e_1 + e_2 + e_3 \leq \left(\frac{q}{3}-1\right)\right\rbrace.
    \end{align*}
    This set contains $0,1,\dots, q-5$ and $q-3$. In fact, $q-3 = 3(q/3 - 1)$ shows $q-3\in \Sigma_3$ and for $n \leq q-5$ we claim that $n = e_1 + 3 e_2$ with $e_1 \leq 2$ is valid. To see this, note that 
    $$
        3(e_1 + e_2) = n + 2e_1 \leq q-1.
    $$
    Since $e_1 + e_2$ is an integer this implies $e_1 + e_2 \leq q/3 - 1$, just as we wanted.

    If $v_P(A) = 1$ then $t=q-5$ and $t=q-3$ shows that $q-3$ and $q-1$ are gaps, which contradicts $m_1(P) \in \{q-j,q-1\}$. If $v_P(A)=2$ then $t=q-3$ shows that $q$ is a gap at $P$, which is impossible since $m_2(P)=q$. Finally, if $v_P(A) \geq 3$ then
    $$
        0 \leq (q-3) - (v_P(A)+1) < (q-1) - (v_P(A)+1) \leq q-5,
    $$
    where the first inequality follows from $v_P(A) \leq \deg A = q/3 - 1$, so $t = (q-3) - (v_P(A)+1)$ and $t= (q-1) - (v_P(A)+1)$ are valid choices, and we get that $q-j=q-3$ and $q-1$ are gaps, a contradiction. \newline

    The case $j=q/3$ can be excluded in a similar way. In fact, note that $v_P(A) <q/3-1$ as if $P$ is in the support of $A$ so is the $\mathbb{F}_{q^2}$-Frobenius image of $P$. This implies that $\frac{q}{3} - v_P(A) - 2 \geq 0$. Hence 
    $$
        (q-1) - (v_P(A)+1) = \left(\frac{q}{3} - v_P(A) - 2\right) + 2 \frac{q}{3} \in \Sigma_{q/3}
    $$
    since
    $$
        \left(\frac{q}{3} - v_P(A) - 2\right) + 2 \leq \frac{q}{3}-1,
    $$
    so $q-1$ is a gap, and 
    $$
        \left(q-\frac{q}{3}\right) - (v_P(A)+1) = \left(\frac{q}{3} - v_P(A) - 1\right) +  \frac{q}{3} \in \Sigma_{q/3}
    $$
    since
    $$
        \left(\frac{q}{3} - v_P(A) - 1\right) + 1 \leq \frac{q}{3}-1,
    $$
    so $q-j$ is also a gap.

    To exclude the case $j=(q+1)/2$, note first that $v_P(A)=\frac{q-3}{6}$ follows from the considerations in the beginning of this proof. Now
    $$
        (q-1)- \left(\frac{q-3}{6}+1\right) =  \left(\frac{q}{3}-2\right) + j\in \Sigma_j
    $$
    and 
    $$
        (q-j) - \left(\frac{q-3}{6}+1\right) = \frac{q}{3}-1 \in \Sigma_j,
    $$
    so both $q-1$ and $q-j$ are gaps. 

    To exclude the case $j=(q-1)/2$ a different strategy is needed, and we consider $q=9$ separately. Suppose first that $q\geq 27$, and that $j=(q-1)/2$. By the Riemann--Roch Theorem and Lemma \ref{lemma:divisor_A} we have 
    $$
        \ell\left(\left( \frac{q}{3} -1\right) D\right) = \left(\frac{q}{3}-1\right)(q+1) +2-g = \frac{q^2 -3q + 6}{6}, 
    $$
    but we will use $j=(q-1)/2$ to construct $ \frac{q^2 -3q + 6}{6}+1$ linearly independent elements in this Riemann--Roch space. 

    With $g_0=1, g_1, g_2, g_3$ as above the elements of
    $$
        B := \left\{g_1^ag_2^bg_3^c \ \big\vert \ a+b+c\leq \frac{q}{3}-1 \quad \text{and} \quad a=0 \text{ or } b\leq 1\right\}
    $$
    are in $\mathcal{L}((q/3-1)D)$, and we claim that they all have distinct valuations at $P$. In fact, if $(a,b,c)$ and $(a',b',c')$ are distinct valid triplets giving rise to the same valuation then 
    $$
        (a+c) + (b + 2c)j = a + bj+cq = a' + b'j + c'q = (a'+c') + (b'+2c')j,
    $$
    and since $a+c \le a+b+c \le q/3-1< (q-1)/2=j$ working modulo $j$ implies 
    $$
        a+c = a'+ c' \quad \text{and} \quad b + 2c = b'+2c'.
    $$
    If $c = c'$ we get $a=a'$ and $b=b'$ immediately, i.e., a contradiction. Otherwise we may assume $c>c'$, and then 
    $$
        a' = a + c-c' \geq 1 \quad \text{and} \quad b'=b+ 2(c-c')\geq 2,
    $$
    which is also a contradiction. 
    
    It follows that the elements of $B$ are $\Fqq$-linearly independent in $F$, and since
    $$
        |B| = \binom{\frac{q}{3}+2}{3} - \binom{\frac{q}{3}-1}{3} =  \frac{q^2-3q+6}{6},
    $$
    they form a basis of $\mathcal{L}((q/3-1)D)$. 

    Now, since $q\in H(P)$ we can pick a function $f \in F$ with $(f)_\infty=qP$, and then $fg_2^4 \in \mathcal{L}((q/3-1)D)$ since
    $$
        4(q+1) <\left(\frac{q}{3}-1\right)(q+1)
    $$
    using $q\geq 27$, and
    $$
        v_P(fg_2^4) = 4j-q = q-2. 
    $$
    There is no element in $B$ with valuation $q-2$ at $P$. In fact, we would need $c=0$ and $b\leq 1$, but then $a$ would have to be too large. We conclude that the functions in $B \cup \{fg_2^4\}$ are linearly independent, which is a contradiction. \newline
    
    Finally, suppose that $q=9$. Then the considerations in the beginning of the proof imply that $A = P + P'$, where $P'$ is the image of $P$ under the $\Fqq$-Frobenius. By Lemma \ref{lemma:divisor_A} we have
    $$
        P+P'=A \sim W - 2D,
    $$
    where $W$ is a canonical divisor. By Lemma \ref{lemma:canonical_half_Weil} we can write $W \sim 2M$ for some divisor $M$, and then \cite[Lemma 1]{ruckCharacterizationHermitianFunction1994} implies 
    $$
        5(P + P' - 2P_0) \sim 5(W-2D-2P_0) = 10(M-D-P_0) \sim 0.
    $$
    From the Fundamental Equivalence (see \cite[Corollary 1.2]{fuhrmannMaximalCurves1997}) we have $qP + P' \sim D$, which when combined with the above gives
    $$
       4P' - 4P \sim 0.
    $$
    This implies $4 \in H(P)$ which is a contradiction since $m_1(P) \in \{5,8\}$.

    We have reached a contradiction in all cases, so we conclude that $j=2$ is the only option for a non-$\Fqq$-rational point of $\X$. 
\end{proof}

\begin{corollary}\label{cor:at_least_one_rational_in_A}
    The divisor $A$ is supported on the $\Fqq$-rational points of $\X$. In particular, the support contains at least one $\Fqq$-rational point.
\end{corollary}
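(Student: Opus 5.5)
The plan is to read this off the preceding theorem together with Lemma \ref{lemma:divisor_A}. First, recall that $E = R - T$ and $3A = E$, so $\operatorname{supp} A = \operatorname{supp} E$. Next, let $P$ be a non-$\Fqq$-rational point. By the theorem just proved, its $\mathcal{D}$-order sequence is $(0,1,2,q)$, which coincides with the generic order sequence $(\eps_0,\eps_1,\eps_2,\eps_3) = (0,1,2,q)$ of Proposition \ref{prop:frob_generic_order_sequence}.

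By \cite[Theorem 1.5]{stohrWeierstrassPointsCurves1986}, $v_P(R) \geq \sum_i (j_i(P) - \eps_i)$, with equality if and only if a certain determinant of binomial coefficients is nonzero modulo $p$. When $j_i(P) = \eps_i$ for all $i$, this determinant is the identity matrix. Hence $v_P(R) = 0$. Since $P \notin \X(\Fqq)$, we also have $v_P(T) = 0$, so $v_P(E) = 0$ and therefore $v_P(A) = 0$. This gives $\operatorname{supp} A \subseteq \X(\Fqq)$, as already noted in the theorem.

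For the second assertion, it suffices that $A \neq 0$. By Lemma \ref{lemma:divisor_A}, $\deg A = q/3 - 1$, and this is at least $2$ because $q \geq 9$. So $A$ is a nonzero effective divisor, and its support is nonempty. Combined with the inclusion just shown, the support contains at least one $\Fqq$-rational point.

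I expect no real obstacle here. The only care needed is the equality case of the St\"ohr--Voloch multiplicity bound, that is, checking that $v_P(R) = 0$ exactly when the order sequence at $P$ equals the generic one. This is standard: $R$ is supported precisely on the points whose orders differ from the $\eps_i$.
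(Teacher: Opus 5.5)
Your proposal is correct and follows the paper's own proof: the paper likewise combines the preceding theorem, \cite[Theorem 1.5]{stohrWeierstrassPointsCurves1986} to get $v_P(R)=0$ at non-rational points, and $\deg A = q/3-1\geq 2$. One small slip: the matrix $\bigl(\binom{\eps_i}{\eps_k}\bigr)$ is lower unitriangular rather than the identity (for instance $\binom{2}{1}=2\not\equiv 0 \pmod 3$), but its determinant is still $1$, so your conclusion stands.
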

\begin{proof}
    This is a direct consequence of the above theorem, \cite[Theorem 1.5]{stohrWeierstrassPointsCurves1986} and the fact that $\deg A = \frac{q}{3}-1 \geq2$.
\end{proof}

    

\subsection{The special rational point}

The goal of this section is to show that the support of the divisor $A = \frac{R-T}{3}$ is exactly one point. We saw in the previous section that any point in the support of $A$ is $\Fqq$-rational, so we proceed by studying the order sequence at a rational point.

\begin{proposition}\label{prop:j_2_rational}
    For any $\Fqq$-rational point $P$ of $\X$, the $\mathcal{D}$-order sequence is $(0,1,j_2,q+1)$ with
    $$
        j_2 \in \{2,q/3 + 1\}
    $$
\end{proposition}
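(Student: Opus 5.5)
We first fix the shape of the order sequence at a rational point $P$. By the standard facts for maximal curves (\cite[Proposition 10.6]{hirschfeldAlgebraicCurvesFinite2008}), the Frobenius linear series satisfies $qP + \Phi(P) \sim D$, so for rational $P$ we get $(q+1)P \sim D$ and the last order is $q+1$. Also $j_1=1$ and $j_2 \le q$. Hence the order sequence is $(0,1,j_2,q+1)$, and it remains to pin down $j_2$. Moreover $q+1-j_2$ lies in $H(P)$, and $q, q+1\in H(P)$.

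The plan is to rerun the cubic-relation argument from the proof of the previous theorem. Choose a basis $g_0=1,g_1,g_2,g_3$ of $\mathcal L(D)$ with $v_P(g_1)=1$, $v_P(g_2)=j:=j_2$ and $v_P(g_3)=q+1$. Rewrite the cubic relation of Proposition \ref{prop:contained_in_cubic} in this basis. Every monomial $g_1^ag_2^bg_3^c$ with $a+b+c\le 3$ has valuation $a+bj+c(q+1)$ at $P$, and the minimal valuation must be attained twice. Suppose $j>2$. If $c=c'$, then $j\mid a-a'$, which forces $j=3$. If $|c-c'|\ge 2$, we need a bound on $j$. As before, $v_P(E)\ge j-2$ by \cite[Theorem 1.5]{stohrWeierstrassPointsCurves1986}, with the correction that rational points already carry one unit in $T$. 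Since $\deg E=q-3$, this gives $j\le q-1$, which is weaker than in the non-rational case. So $|c-c'|\ge 2$ may need the finer bound $2(q+1)\le (a'-a)+(b'-b)j$ with $|b'-b|\le 3$. In the main case $c=c'+1$ we get $a+bj+q+1=a'+b'j$. The possibilities are: $b'-b=1$, giving $j=q+1+a-a'$, i.e.\ $j\in\{q-1,q,q+1,\dots\}$; $b'-b=2$, giving $j\approx (q+1)/2$, i.e.\ $j\in\{q/2\pm\}$ with parity constraints, and since $q$ is odd this leaves $j=(q+1\pm 1)/2$ or $(q+1)/2$ exactly; and $b'=3$, $b=0$, giving $3j=q+1+a-a'$, i.e.\ $j=q/3+1$ (needing $a'=1$, $a=0$, or similar). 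So the candidates are $j\in\{3,\,q/3+1,\,(q+1)/2,\,q/2+1\text{-type},\,q-1,\,q\}$ together with $2$.

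Next we exclude everything except $2$ and $q/3+1$. The large values $j\ge (q+1)/2$ are excluded by the degree bound: $3v_P(A)+1= v_P(E)+\dots \ge j-2$ gives $j\le q/3+1$ roughly, since $\deg A=q/3-1$ and $v_P(E)=3v_P(A)$ with $v_P(E)\ge j-2$. Concretely, $j-2\le 3(q/3-1)=q-3$, so this alone is not enough. However, Lemma \ref{lemma:divisor_A} forces $v_P(E)\equiv 0\pmod 3$, and for $j=3$ we need $v_P(E)\ge 1$, hence $v_P(E)\ge 3$. To exclude $j=3$, $j=(q\pm1)/2$ and $j\in\{q-1,q\}$ at rational points, we use the gap argument of Lemma \ref{lemma:gaps_at_non_rational}, adapted with $q$ replaced by $q+1$ in $\Sigma_j$: the value $v_P(A)+t+1$ is a gap for $t\in\Sigma_j$. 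We then combine this with the known non-gaps $q$, $q+1$ and $q+1-j$ at $P$, exactly as in the non-rational case. For $j=3$, the set $\Sigma_3$ covers all of $0,\dots,q-5$, which makes $q$ or $q-2=q+1-j$ a gap, a contradiction.

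The main obstacle is the exclusion of $j=(q-1)/2$ (and any analogous mid-range value). There the gap argument alone does not suffice, and we must instead reproduce the counting argument in $\mathcal L((q/3-1)D)$: build more monomials $g_1^ag_2^bg_3^c$ with distinct valuations than $\ell((q/3-1)D)$ allows, treating small $q$ such as $q=9$ by hand. The value $j=q/3+1$ survives because the relation $3j=q+1+2$ with $a'=0$, $a=2$ is consistent with all of these constraints; it is this surviving value that later gives the special point.
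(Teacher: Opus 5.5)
\paragraph{The case $j_2=3$.} There is a genuine gap here, and $j_2=3$ is the one case the paper actually has to fight. Your adaptation of the gap lemma is fine (take $P_0\neq P$, so $v_P(D)=0$). For $j_2=3$ it does force $v_P(A)\ge 1$ and rule out $v_P(A)\ge 2$. But if $v_P(A)=1$, the forced gaps below $q+1$ are $t+2$ with $t\in\{0,\dots,q-5,q-3\}$, i.e.\ $2,\dots,q-3$ and $q-1$. Neither $q-2$ nor $q$ is among them. At a rational point the non-gaps up to $q+1$ are exactly $q-2,q,q+1$, so nothing is contradicted. The non-rational argument worked only because there $m_1(P)\in\{q-1,q-j\}$ made the gap $q-1$ forbidden.

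In fact every gap the lemma produces, namely $e_1+3e_2+(q+1)e_3+2$, is a gap of $\langle q-2,q,q+1\rangle$. For $q=9$ you get $2,3,4,5,6,8,12,13,15,22$, all gaps of $\langle 7,9,10\rangle$. So the gap lemma cannot exclude this case.

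The missing idea is the paper's. Since $\langle q-2,q,q+1\rangle\subseteq H(P)$ and that semigroup has genus $g$, equality holds. Then $m=(q^2-2q+3)/3$ is a gap, so there is a canonical divisor $(m-1)P+E_P$ with $E_P\ge 0$ of degree $q/3-2$ and $P\notin\operatorname{supp}E_P$; in your notation this is simply $E_P=A-P$. Write $W\sim 2M$ (Weil) and use that $q+1$ annihilates degree-zero classes on a maximal curve (R\"uck). This gives $\frac{q+1}{2}\bigl(E_P-(\frac{q}{3}-2)P\bigr)\sim 0$, so $\frac{(q+1)(q-6)}{6}\in H(P)$. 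But that number is a gap of $\langle q-2,q,q+1\rangle$, a contradiction.

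\paragraph{The reduction to $\{2,3,q/3+1\}$.} This is also not established. Your case analysis misses $j=2q/3+1$, which comes from $g_2^3$ against $g_1g_3^2$ (a case with $|c-c'|=2$). It also garbles the case $b'-b=2$: the solutions there are $(q+1)/2$ and $(q+3)/2$. The value $(q-1)/2$ that you call the main obstacle does not arise at rational points at all.

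Most of the extra candidates are easy:
\begin{itemize}
\item $j=q$ dies by the degree bound.
\item $j\in\{q-1,(q+3)/2,2q/3+1\}$ die because the non-gap $q+1-j$ falls in the run of gaps $v_P(A)+1,\dots,v_P(A)+q/3$. When $q+1-j=2$, use an even number in that run instead.
\end{itemize}

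The value $j=(q+1)/2$ is not easy. The non-rational exclusion used $v_P(A)=(q-3)/6$, which came from Frobenius-conjugate doubling. Here the gap lemma does force $v_P(A)=(q-3)/6$. However, the resulting runs $[(q+3)/6,(q-1)/2]$ and $[2q/3+1,q-1]$ avoid the non-gaps $(q+1)/2,q,q+1$. The counting argument does not transfer either: $q+1=2j$ makes $g_3$ and $g_2^2$ share a valuation, leaving only $q^2/9<(q^2-3q+6)/6$ distinct monomial valuations.

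The paper bypasses all of this by quoting \cite[Theorem 2.5]{oliveiraSpaceMaximalCurves2019}. For a curve on a cubic, that result gives $j_2\in\{2,3,q/3+1\}$ at rational points directly.
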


\begin{proof}
    Since $\pi(\X)$ is contained in a cubic it follows from \cite[Theorem 2.5]{oliveiraSpaceMaximalCurves2019}, and the fact that $q$ and $q+1$ are non-gaps at any $\Fqq$-rational point of an $\Fqq$-maximal curve (see, e.g., \cite[Proposition 10.6 (IV) and (XII)]{hirschfeldAlgebraicCurvesFinite2008}), that the order sequence at a $\Fqq$-rational point is $(0,1,j_2,q+1)$ with $j_2 \in \{2,3,q/3 + 1\}$. The only thing that remains to be shown is that $j_2 = 3$ does not occur.

    Suppose now that $P$ is a $\Fqq$-rational point of $\X$ with $j_2 = 3$. By \cite[Proposition 10.6]{hirschfeldAlgebraicCurvesFinite2008} the Weierstrass semigroup $H(P)$ contains $\langle q-2,q,q+1\rangle$. By \cite[Section A2]{Fuhrmann1995} (see also \cite[Lemma 3.4]{cossidenteCurvesCoveredHermitian1999}) the genus of the semigroup $\langle q-2,q,q+1\rangle$ is equal to $g(\X)$, hence
    $$
        H(P) = \langle q-2, q, q+1\rangle.
    $$
    We will arrive at a contradiction by showing that $n:=\frac{(q+1)(q-6)}{6}$ is both a gap and a non-gap at $P$. 

    First we show that $n$ is a gap. 
    In fact, 
    $$
        \frac{q-3}{6}\cdot (q-2) = \frac{q^2-5q+6}{6} > \frac{q^2-5q-6}{6} = \frac{(q+1)(q-6)}{6},
    $$
    so $(q-3)/6$ times the smallest generator is larger than $n$. On the other hand, 
    $$
        \left(\frac{q-3}{6}-1 \right)\cdot (q+1) = \frac{q^2-8q-9}{6} < \frac{q^2-5q-6}{6} = \frac{(q+1)(q-6)}{6},
    $$
    so $(q-3)/6 - 1$ times the largest is smaller than $n$. But then $n$ cannot be in the semigroup $\langle q-2, q, q+1\rangle$.

    Next, we construct a function with pole divisor equal to $\frac{(q+1)(q-6)}{6}P$, and hence get a contradiction. First of all, we claim that $m := \frac{q^2-2q+3}{3}$ is a gap of the semigroup $H(P)=\langle q-2, q, q+1\rangle$. Suppose the claim is false, then there exist non-negative integers $a,b$ and $c$ such that 
    \begin{equation}\label{eq:abc}
        a(q-2)+bq+c(q+1)=m.
    \end{equation} 
    Since $q/3 (q+1)>m$ and $q/3(q-2)<m<(q/3+1)(q-2)$, we conclude that $a+b+c=q/3$. But then Equation \eqref{eq:abc} implies that $2b+3c=1$. Since $b$ and $c$ are non-negative integers, this gives a contradiction. Hence $m\not\in H(P)$. 
    
    This implies using \cite[Corollary 14.2.5]{villasalvadorTopicsTheoryAlgebraic2006}, that there exists a regular differential $\omega$ of $F$ such that
    $$
        (\omega) = (m-1)P + E_P,
    $$
    where $E_P$ is an effective divisor of degree $q/3 - 2$ with support disjoint from $P$.

    Let $M$ be the divisor guaranteed by Lemma \ref{lemma:canonical_half_Weil}. Then, 
    $$
        (\omega) \sim 2M,
    $$
    and therefore
    $$
        \deg (M-(g-1)P) = 0.
    $$
    Using maximality of $F$, we may conclude that $(q+1)(M-(g-1)P)$ is a principal divisor, see \cite[Lemma 1]{ruckCharacterizationHermitianFunction1994}.
    This means that 
    
    \begin{align*}
        0 & \sim (q+1)\cdot (M-(g-1)P) \\
            &\sim \frac{q+1}{2} \cdot \left((\omega) - (2g-2)P\right) \\
            &= \frac{q+1}{2} \cdot \left((m-1)P + E_P - (2g-2)P\right) \\
            &= \frac{q+1}{2} \cdot \left(E_P - \left(\frac{q}{3}-2\right) P\right).
    \end{align*}
   Since $E_P$ is effective by construction, the above therefore shows that
    $$
        \frac{q+1}{2} \cdot \left(\frac{q}{3}-2\right) = \frac{(q+1)(q-6)}{6}
    $$
    is a non-gap at $P$. We have reached a contradiction, and conclude that $j_2=3$ never occurs for a $\Fqq$-rational point $P$ of $\X$. The result follows.
\end{proof}

\begin{corollary}\label{cor:special_place}
    There is an $\Fqq$-rational point $P$ such that
    $$
        A = \left( \frac{q}{3}-1\right)P.
    $$
    The order sequence at $P$ is $(0,1,q/3+1,q+1)$ and the Weierstrass semigroup is $H(P) = \langle 2q/3,q,q+1\rangle$.

\end{corollary}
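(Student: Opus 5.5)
Corollary \ref{cor:at_least_one_rational_in_A} tells us that $A$ is effective of degree $q/3-1$ and supported on $\X(\Fqq)$. By Stöhr--Voloch \cite[Theorem 1.5]{stohrWeierstrassPointsCurves1986}, a point $P$ lies in the support of $R-T$ exactly when its order sequence differs from the one at a generic rational point. Because the generic order sequence is $(0,1,2,q)$, a rational point with sequence $(0,1,2,q+1)$ has $v_P(R)=1$ and so does not lie in $\operatorname{supp}E$. Proposition \ref{prop:j_2_rational} therefore forces every rational point in $\operatorname{supp}A$ to have order sequence $(0,1,q/3+1,q+1)$. At such a point, \cite[Theorem 1.5]{stohrWeierstrassPointsCurves1986} gives
$$v_P(R)\ge (1-1)+(q/3+1-2)+(q+1-q)=q/3.$$
Hence $v_P(E)\ge q/3-1$, so $v_P(A)\ge \tfrac13(q/3-1)$. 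This lower bound is not enough by itself, so we need to bound the number of such points instead.

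The order sequence at $P$ determines part of the semigroup. By \cite[Proposition 10.6]{hirschfeldAlgebraicCurvesFinite2008}, $q+1-j_2=2q/3$ is a non-gap, and so are $q$ and $q+1$. Thus $H(P)\supseteq\langle 2q/3,q,q+1\rangle$. Next we check, as in the proof of Proposition \ref{prop:j_2_rational} via \cite[Section A2]{Fuhrmann1995}, that this numerical semigroup has genus $q(q-1)/6=g_3$. This gives $H(P)=\langle 2q/3,q,q+1\rangle$ for every point of $\operatorname{supp}A$.

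To show the support is a single point, suppose $P\ne Q$ are both in $\operatorname{supp}A$. Both then have $2q/3\in H$, so by the Fundamental Equivalence $(q+1)P\sim(q+1)Q\sim D$ we obtain functions $x_P,x_Q$ with pole divisors $\frac{2q}{3}P$ and $\frac{2q}{3}Q$. My first attempt is a degree count. The function $x_P$ gives a map of degree $2q/3$ to $\mP^1$, and counting rational places gives $N(F)\le \frac{2q}{3}(q^2+1)$. This is compatible with $N(F)$, so this route fails. I would instead work at one point $P$ with the canonical divisor. From $A\sim W-(q/3-1)D$ and $D\sim(q+1)P$, the divisor $A+(q/3-1)(q+1)P$ is an effective canonical divisor. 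Since $\ell(A)=1$, $A$ is the unique effective divisor in its class. I would then compute the largest gap of $H(P)$, or the structure of $\mathcal L(W-(q/3-1)(q+1)P)$. For $H(P)=\langle 2q/3,q,q+1\rangle$, the largest gap is $\ell_g=2g-1-\deg A'$, where the canonical divisor $(\ell_g-1)P+A'$ exists. The key point is that the conductor of $\langle 2q/3,q,q+1\rangle$ should equal $2g$, so the semigroup is symmetric. If so, $(2g-2)P$ is canonical, hence $W\sim(2g-2)P$. Then
$$A\sim (2g-2)P-(q/3-1)(q+1)P=(q/3-1)P,$$
and uniqueness of the effective divisor in the class of $A$ (Lemma \ref{lemma:divisor_A}) gives $A=(q/3-1)P$.

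The main obstacle is verifying the symmetry, and also the genus count, of $\langle 2q/3,q,q+1\rangle$. I expect these to follow from an Apéry set computation with respect to $q$, or by citing \cite{Fuhrmann1995}. With that done, the order sequence and semigroup statements at $P$ follow from the second paragraph above.
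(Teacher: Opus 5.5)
Your proposal is correct and follows the paper's own proof. Proposition~\ref{prop:j_2_rational} together with \cite[Theorem 1.5]{stohrWeierstrassPointsCurves1986} forces the order sequence $(0,1,q/3+1,q+1)$ at a point of $\operatorname{supp}A$. The genus count then gives $H(P)=\langle 2q/3,q,q+1\rangle$, symmetry gives $W\sim(2g-2)P$, and $\ell(A)=1$ yields $A=(q/3-1)P$; the detours through $v_P(R)\ge q/3$ and the degree count are unnecessary.

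The symmetry you flag as the obstacle is settled, as in the paper, by noting that $(2q/3,q,q+1)$ is telescopic: $d_2=q/3$, and $q+1\in\langle 2,3\rangle$. Hence it is symmetric by \cite[Lemma 6.5]{KP}, which also gives the genus $\frac{q}{3}\cdot 1+\left(\frac{q}{3}-1\right)\frac{q}{2}=\frac{q(q-1)}{6}$.
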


\begin{proof}
    By Corollary \ref{cor:at_least_one_rational_in_A} there is at least one $\Fqq$-rational point, say $P$, in the support of $A$. Proposition \ref{prop:j_2_rational} above implies that the order sequence at $P$ is $(0,1,j_2,q+1)$ with $j_2 \in \{2,q/3 +1\}$. If $j_2 = 2$ then \cite[Theorem 1.5]{stohrWeierstrassPointsCurves1986} would imply $v_P(R)=1$ and hence $v_P(A) = \frac{1}{3}v_P(R-T) = 0$, which is a contradiction. We conclude that the order sequence is $(0,1,q/3+1,q+1)$. 

    It now follows from \cite[Proposition 10.6 (IV)]{hirschfeldAlgebraicCurvesFinite2008} that $H(P)\supseteq \langle 2q/3, q, q+1\rangle$. By \cite[Section A2]{Fuhrmann1995} (see also \cite[Lemma 3.4]{cossidenteCurvesCoveredHermitian1999}) the semigroup $ \langle 2q/3, q, q+1\rangle$ has genus $g(\X)$, so 
    $$
        H(P) = \langle 2q/3, q, q+1\rangle.
    $$
    It is easy to see that the  semigroup $H(P)$ is telescopic (for the definition see \cite[Definition 6.1]{KP}) and hence symmetric from \cite[Lemma 6.5]{KP}. Thus
    $$
        W \sim (2g-2)P,
    $$
    where $W$ is a canonical divisor. By Lemma \ref{lemma:divisor_A} this means
    $$
        (2g-2)P \sim A + \left( \frac{q}{3}-1\right)D.
    $$
    Since $P$ is $\Fqq$-rational we have $D = (q+1)P_0 \sim (q+1)P$, so the above implies
    $$
        \left(\frac{q}{3}-1\right)P = \left(2g-2 - \left(\frac{q}{3}-1\right)\cdot(q+1)\right)P \sim A.
    $$
    From Lemma \ref{lemma:divisor_A} we know that $A$ is the only effective divisor in its divisor class, so we conclude that
    $$
        A = \left( \frac{q}{3}-1\right) P. 
    $$
\end{proof}

\section{The cubic and the defining equation}

The goal of this section is to prove Theorem \ref{thm:main}. We will do so by showing that the function field $F$ of $\X$, as defined in the previous section, is isomorphic to the function field of the known curve with the third largest genus. This is sufficient since any function field can be realized as the function field of a non-singular projective curve (see, e.g., \cite[Appendix B.9]{stichtenothAlgebraicFunctionFields2009}). We keep the assumptions and notation from the previous sections.

\subsection{The shape of the cubic and the Hermitian surface}

From now on, we denote by $P$ the place of $F$ that corresponds to the special $\Fqq$-rational point from Corollary \ref{cor:special_place}. We consider the Frobenius embedding $\pi:\X \to \mP^3$ defined by $(1:u:v:w)$ where $u,v,w \in \mathcal{L}((q+1)P)$ with
$$
    (u)_\infty = \frac{2q}{3}P, \quad (v)_\infty= qP, \quad \text{and} \quad (w)_\infty = (q+1)P.
$$
With notation as in Section \ref{sec:preliminaries} this means that we are taking $P_0 = P$, $D=(q+1)P$ and then $\pi$ is the morphism associated with $\mathcal{D} = |D|$. Note however, that our choice of basis for $\mathcal{L}(D)$ means that we can no longer assume that the Hermitian surface containing $\pi(\X)$ is in its canonical form. The main goal of this section is to understand better both the shape of the cubic relation given by Proposition \ref{prop:contained_in_cubic} and the shape of the Hermitian surface containing $\pi(\X)$. First a result regarding the cubic: 

\begin{lemma}
    We can choose $u, v$ and $w$ such that the cubic relation is 
    $$
        v^2 = au^3 + (bu + c)w + du^2 + eu + f, 
    $$
    for some $a,b,c,d,e,f \in \Fqq$ and $a\neq 0$.
\end{lemma}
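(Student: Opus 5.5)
The approach is to write down the cubic relation $G(u,v,w)=0$ given by Proposition \ref{prop:contained_in_cubic}, with coefficients in $\Fqq$, and compare pole orders at $P$. The three functions have pole orders $2q/3$, $q$ and $q+1$. A monomial $u^av^bw^c$ with $a+b+c\le 3$ has pole order $2qa/3+qb+(q+1)c$ at $P$. In a nontrivial relation, the maximal pole order occurring among the monomials with nonzero coefficient must be attained at least twice, so I first list which monomials can share a pole order.

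\textbf{Step 1: coincidences of pole orders.} Suppose $2qa/3+qb+(q+1)c = 2qa'/3+qb'+(q+1)c'$. Reducing modulo $q$ gives $c\equiv c' \pmod q$, hence $c=c'$ since $q\ge 9$. Then $2a+3b=2a'+3b'$. With the bound $a+b+\le 3$, the only nontrivial solution is $\{(3,0),(0,2)\}$, that is, $u^3$ against $v^2$ with the same power of $w$. Since $c\le 1$ is forced in that case, the coinciding pairs are exactly $u^3 \leftrightarrow v^2$ and $u^3w$ is impossible (degree 4); hence only $u^3$ and $v^2$ (with $c=0$) collide. All other monomials have pairwise distinct pole orders.

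\textbf{Step 2: shape of the cubic.} The maximal pole order must therefore be $2q$, attained by $u^3$ and $v^2$, and both coefficients are nonzero. Every other monomial occurring in $G$ must then have pole order less than $2q$. I would check each remaining monomial with $a+b+c\le3$:
\begin{itemize}
\item $w^2,w^3,vw,uvw,v^3,v^2u,\dots$ have pole order at least $2q+1$, so they are excluded.
\item $uw$ has order $5q/3+1<2q$, $w$ has order $q+1$, and $u^2$, $u$, $1$ are smaller still, so these are all allowed.
\item $uv$ has order $5q/3$ and $v$ has order $q$, which are also allowed.
\item $u^2w$ has order $7q/3+1$, so it is excluded.
\end{itemize}
Hence the relation reads $\alpha v^2 + \beta uv + \gamma v = a'u^3 + (bu+c)w + du^2+eu+f$ with $\alpha, a'\ne 0$.

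\textbf{Step 3: removing the linear terms in $v$.} Divide by $\alpha$. Since the characteristic is $3\ne 2$, I can complete the square by replacing $v$ with $v+(\beta u+\gamma)/(2\alpha)$. This new $v$ still lies in $\mathcal L((q+1)P)$ with pole divisor exactly $qP$, because $u$ has smaller pole order. The substitution produces only terms $u^2$, $u$ and $1$ on the right-hand side, which gives the claimed form with $a\neq0$. Finally, $1,u,v,w$ still form a basis of $\mathcal L(D)$, so $\pi$ is the same morphism up to a projective linear change.

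The main obstacle is just the bookkeeping in Step 1. One must use the degree bound $a+b+c\le 3$ carefully, together with $q\ge 9$, so that no accidental coincidence such as $u^2w$ versus $v^2$ can occur. In characteristic $3$ one must also confirm that $2$ is invertible, which it is.
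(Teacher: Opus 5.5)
\textbf{A flawed step in Step 1.} Your strategy works, but the justification in Step 1 is wrong as written. Reducing $\tfrac{2q}{3}a+qb+(q+1)c=\tfrac{2q}{3}a'+qb'+(q+1)c'$ modulo $q$ does not give $c\equiv c'\pmod q$. The term $\tfrac{2q}{3}a$ is divisible by $q$ only when $3\mid a$; for $a=1,2$ it is congruent to $\tfrac{2q}{3}$, resp.\ $\tfrac{q}{3}$, modulo $q$.

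The correct reduction is modulo $q/3$. This kills the $u$- and $v$-contributions and gives $c\equiv c'\pmod{q/3}$. For $q\ge 27$ this forces $c=c'$, since $|c-c'|\le 3<q/3$. For $q=9$ it leaves the extra case $\{c,c'\}=\{0,3\}$, which you must exclude separately. The only monomial with $c=3$ is $w^3$, of pole order $3q+3$, while every monomial with $c=0$ has pole order at most $3q$.

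Alternatively, tabulate the twenty pole orders directly. For $q\ge 9$ they are pairwise distinct, except that $u^3$ and $v^2$ both give $2q$. Once $c=c'$ is established, your analysis of $2a+3b=2a'+3b'$ is fine. One small correction there: in the colliding case $a=3$ forces $c=c'=0$, not merely $c\le 1$. Steps 2 and 3 are then correct.

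\textbf{Comparison with the paper.} The paper argues via the semigroup rather than by inspecting all monomials of the cubic from Proposition \ref{prop:contained_in_cubic}. Since $H(P)=\langle 2q/3,q,q+1\rangle$, its eight smallest elements give $\ell(2qP)=8$. Hence the nine functions $1,u,v,w,u^2,uv,uw,v^2,u^3\in\mathcal L(2qP)$ are linearly dependent. Because only $2q$ is attained twice, the dependence must involve both $u^3$ and $v^2$ with nonzero coefficients.

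This produces a relation of the required shape directly and only looks at pole orders up to $2q$. It is then identified with ``the'' cubic relation by the uniqueness in Proposition \ref{prop:contained_in_cubic}. Your route takes the existence of the cubic as input and needs only the pole orders of $u,v,w$. The cost is that you must rule out coincidences among all monomials of degree at most $3$, including those of pole order above $2q$, which is exactly where the slip occurred.
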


\begin{proof}
Since $H(P)=\langle 2q/3,q,q+1 \rangle$, the eight smallest elements of this semigroup are given by $0,2q/3,q,q+1,4q/3,5q/3,5q/3+1$ and $2q$. Of these eight elements, only $2q$ can be written in two ways in terms of the three generators of $H(P)$, namely $2q=3\cdot 2q/3$ and $2q=2\cdot q$. This shows that there exists a cubic relation between $u,v$ and $w$ of the form
%
    $$
        v^2 + a_1u^3 +a_2uw+a_3uv+a_4u^2+a_5w+a_6v+a_7u + a_8 = 0,
    $$
    with $a_i \in \Fqq$ and $a_1 \neq 0$. Completing the square in $v$ now gives the shape we claimed (with $v$ replaced by $v+a_3/2 \cdot u + a_6/2$).
\end{proof}

In order to study the image of $\pi = (1:u:v:w)$ at $P$ it is useful to change coordinates. From now on, let $t := v/w$, $\xi := u/w$ and $\zeta := 1/w$, so that
$$
    v_P(\zeta)=q+1, \quad v_P(\xi) = \frac{q}{3}+1 \quad \text{and} \quad v_P(t)=1.
$$
Note in particular that $t$ is a local parameter at $P$ and $\pi = (\zeta: \xi:t:1)$ maps $P$ to $[0:0:0:1]$. We obtain the following result about the Hermitian surface containing $\pi(\X)$:

\begin{lemma}
The functions $\zeta, \xi$ and $t$ satisfy
    $$
        (\zeta^q \ \xi^q \ t^q \ 1) \ H 
        \begin{pmatrix}
            \zeta \\
            \xi \\
            t \\
            1
        \end{pmatrix} = 0,
    $$
    where
    $$
        H := 
        \begin{pmatrix}
            \rho & \mu^q & \nu^q & \eta^q \\
            \mu & \alpha & \kappa^q & 0 \\
            \nu & \kappa & \beta & 0 \\
            \eta & 0 & 0 & 0
        \end{pmatrix}.
    $$
Here $\alpha, \beta, \rho \in \Fq$ and $\eta,\kappa,\mu,\nu \in \Fqq$. Moreover, $\eta\beta \neq 0$. Equivalently, the Hermitian surface containing $\pi(\X)$ is given by the equation
    $$
        (X_0^q \ X_1^q \ X_2^q \ X_3^q) \ H 
        \begin{pmatrix}
            X_0 \\
            X_1 \\
            X_2 \\
            X_3
        \end{pmatrix} = 0.
    $$
\end{lemma}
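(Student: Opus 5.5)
The plan is to start from the canonical Hermitian form $f_0^{q+1}+\dots+f_3^{q+1}=0$ of Proposition \ref{prop:frob_generic_order_sequence}. The coordinates $(1,u,v,w)$ are another basis of $\mathcal{L}(D)$, and the change of basis is $\Fqq$-rational. Writing $(f_0,\dots,f_3)^T=C(1,u,v,w)^T$ with $C\in GL_4(\Fqq)$, we get a relation $X^{(q)T}H' X=0$ in the variables $(1,u,v,w)$, where $H'=C^{(q)T}C$ is a nonsingular Hermitian matrix ($H'^{(q)T}=H'$). Passing to $(\zeta,\xi,t,1)=\frac1w(1,u,v,w)$ only permutes coordinates, because $(\zeta:\xi:t:1)=(1:u:v:w)$ projectively. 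So $\pi(\X)$ lies on a nondegenerate Hermitian surface $X^{(q)T}HX=0$ for some Hermitian $H=(h_{ij})$ in the variables $(X_0,X_1,X_2,X_3)=(\zeta,\xi,t,1)$. The Hermitian property already gives that the diagonal entries lie in $\Fq$ and that $h_{ij}=h_{ji}^q$. It remains to show $h_{33}=h_{13}=h_{23}=0$ and $h_{32}=0$ in the last row, with $h_{31}=h_{32}=0$ and the entries in positions $(1,3),(2,3)$ zero by symmetry. We also need $\eta\beta\ne0$.

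The key tool is valuations at $P$, using $v_P(\zeta)=q+1$, $v_P(\xi)=q/3+1$, $v_P(t)=1$ and $v_P(1)=0$. Expanding the identity $\sum_{i,j}h_{ij}X_i^qX_j=0$, each monomial $X_i^qX_j$ has valuation $q\,v_P(X_i)+v_P(X_j)$. The possible values are:
\begin{itemize}
\item for $i=3$ (so $X_3^q=1$): $0$, $1$, $q/3+1$, $q+1$, coming from $j=3,2,1,0$;
\item for $i=2$: $q$, $q+1$, $q+q/3+1$, $2q+1$;
\item for $i=1$: at least $q^2/3+q$;
\item for $i=0$: at least $q^2+q$.
\end{itemize}
I will kill coefficients in increasing order of valuation, since a term of minimal valuation must cancel against another. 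The unique term of valuation $0$ is $h_{33}$, so $h_{33}=0$. Next, $h_{32}t$ is the only term of valuation $1$, so $h_{32}=0$, and by symmetry $h_{23}=0$. After that, $h_{31}\xi$ has valuation $q/3+1<q$ and is the unique remaining term below $q$, so $h_{31}=0=h_{13}$.

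This is the step I expect to need the most care. The surviving terms of low valuation are $h_{23}t^q$ (already zero), then $h_{30}\zeta$ and $h_{22}t^{q+1}$, both of valuation $q+1$. Some coefficient must be nonzero, or else valuations would force more vanishing and eventually make the surface degenerate. If $\eta:=h_{30}=0$, then the whole last row and column of $H$ vanish, so $\det H=0$, contradicting nondegeneracy; hence $\eta\ne0$. The leading term at valuation $q+1$ is then $\eta\zeta+\beta t^{q+1}$ with $\beta:=h_{22}$, and this must cancel, since every other term has valuation at least $q+q/3+1$. This forces $\beta\ne0$; indeed $\beta=-\eta\,\mathrm{lc}(\zeta)/\mathrm{lc}(t)^{q+1}$, where the leading coefficients are taken with respect to $t$. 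The remaining entries $\rho,\mu,\nu,\alpha,\kappa$ are arbitrary subject to Hermitian symmetry, which yields exactly the displayed shape of $H$. Finally, rewriting the relation homogeneously gives the stated equation of the surface.
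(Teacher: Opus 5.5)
Your proof is correct. It differs from the paper's in how you obtain the vanishing of the last row and $\eta\neq 0$.

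\textbf{The paper's route.} It invokes the fact, from Korchm\'aros--Torres, that the osculating plane of $\pi(\X)$ at $P$ coincides with the tangent plane of the Hermitian surface at $\pi(P)=[0:0:0:1]$. The osculating plane is $X_0=0$, and the tangent plane is $\eta X_0+\lambda_1X_1+\lambda_2X_2+\lambda_3X_3=0$. Equating them gives $\lambda_1=\lambda_2=\lambda_3=0$ and $\eta\neq 0$ in one stroke.

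\textbf{Your route.} You stay entirely with valuations at $P$. The monomials $1,t,\xi$ carrying $h_{33},h_{32},h_{31}$ have the pairwise distinct valuations $0<1<q/3+1$. Every other monomial in the relation has valuation at least $q$. So the strict triangle inequality kills these three coefficients. You then get $\eta\neq 0$ from $\det H\neq 0$ rather than from the tangent plane.

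\textbf{Comparison.} In effect you reprove the osculating/tangent coincidence in these particular coordinates. Your argument is therefore self-contained and elementary, at the cost of a few more lines. The paper's is shorter but leans on the cited geometric fact. Your $\beta\neq0$ step is identical to the paper's. Your explicit check that $H'=C^{(q)T}C$ is a nonsingular Hermitian matrix makes precise something the paper takes for granted.

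\textbf{Cosmetic corrections.} Passing from $(1,u,v,w)$ to $(\zeta,\xi,t,1)$ is a rescaling by $1/w$, i.e.\ dividing the relation by $w^{q+1}$, not a permutation of coordinates; the matrix stays the same. Also, your sentence listing which entries remain to be shown zero is garbled, though the entries you actually kill are the right ones.
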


\begin{proof}
    We know that $\pi(\X)$ is contained in a non-degenerate Hermitian surface, say given by the equation $G_H=0$ where 
    $$
        G_H:= (X_0^q \ X_1^q \ X_2^q \ X_3^q) \ H 
        \begin{pmatrix}
            X_0 \\
            X_1 \\
            X_2 \\
            X_3
        \end{pmatrix} \in \Fqq[X_0,X_1,X_2,X_3],
    $$   
    and
    $$
        H := 
        \begin{pmatrix}
            \rho & \mu^q & \nu^q & \eta^q \\
            \mu & \alpha & \kappa^q & \lambda_1^q \\
            \nu & \kappa & \beta & \lambda_2^q \\
            \eta & \lambda_1 & \lambda_2 & \lambda_3
        \end{pmatrix}        
    $$
    with $\alpha, \beta, \rho,\lambda_3\in \Fq$ and $\eta,\kappa,\mu,\nu,\lambda_1,\lambda_2 \in \Fqq$. The claim is that $\lambda_i=0$ for $i=1,2,3$ and $\eta\beta \neq 0$.

    Note that $\pi(P) =[0:0:0:1]$. The osculating hyperplane of $\pi(\X)$ at $P$ is given by 
    $$
        X_0 = 0,
    $$
    since $v_P(a_0\zeta+a_1\xi + a_2t + a_3)$ is largest at $a_1=a_2=a_3=0$. On the other hand, the tangent hyperplane of the non-degenerate Hermitian variety at $P$ is given by
    $$
        0 = \sum_{i=0}^3 \left.\frac{\partial G_H}{\partial X_i}\right\vert_{\pi(P)} X_i= \eta X_0 + \lambda_1X_1 + \lambda_2X_2 + \lambda_3X_3 
    $$
    The osculating hyperplane coincides with this tangent hyperplane (see the paragraph after Theorem 3.6 in \cite{Korchmáros_Torres_2001}), so we get $\lambda_i = 0$ for $i=1,2,3$ and $\eta \neq 0$.

    To show $\beta \neq 0$, we consider the terms in $G_H(\zeta, \xi, t,1)$ with the smallest possible valuation at $P$. There must be at least two such terms to get the required cancellation. Since $\lambda_i =0$ there is no term with valuation less than $q+1$, and $\eta \neq 0$ implies that there is at least one term with this valuation, namely $\eta \cdot \zeta$. The only other term that can have valuation $q+1$ is $\beta \cdot t^{q+1}$, so we conclude also $\beta \neq 0$ as wished.
\end{proof}

The Hermitian relation can be written as 
$$
    \eta \zeta + \beta t^{q+1} +\kappa t^q\xi + \nu t^q \zeta + \kappa^q t \xi^q + \alpha \xi^{q+1}+ \mu \xi^q \zeta + \eta^q \zeta^q + \nu^q  t\zeta^q + \mu^q \xi \zeta^q + \rho \zeta^{q+1} = 0.
$$
In particular, we have
\begin{equation}\label{eq:zeta_local_expansion}
    -\eta \zeta = \beta t^{q+1} + \kappa t^q\xi + \mathcal{O}_P\left(t^{2q+1}\right), 
\end{equation}
where the notation $x = y + \mathcal{O}_P(z)$ means $v_P(x-y) \geq v_P(z)$. After dividing with $w^3$ the cubic relation in terms of $\zeta$, $\xi$ and $t$ reads
\begin{equation}\label{eq:cubic_zeta}
    t^2\zeta = a \xi^3 + b\xi \zeta + c \zeta^2+d\xi^2\zeta + e \xi \zeta^2 + f \zeta^3.
\end{equation}

\begin{lemma}
    We have $b=0$, $\kappa = 0$ and $\alpha \neq 0$.
\end{lemma}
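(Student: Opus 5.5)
The plan is to compare local expansions at $P$ in the local parameter $t$, using the two relations we have: the cubic relation \eqref{eq:cubic_zeta} and the Hermitian expansion \eqref{eq:zeta_local_expansion}. The driving observation is that we are in characteristic $3$. Write $\xi=\sum_{i\ge q/3+1}c_it^i$ with $c_{q/3+1}\neq 0$. Then $\xi^3=\sum_i c_i^3t^{3i}$ involves only exponents divisible by $3$. Hence every term in \eqref{eq:cubic_zeta} other than $a\xi^3$ must, after expansion, have vanishing coefficients at exponents $\not\equiv 0 \pmod 3$.

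\emph{Step 1 ($b=0$).} The valuations at $P$ of the terms in \eqref{eq:cubic_zeta} are:
\begin{itemize}
\item $q+3$ for $t^2\zeta$ and for $a\xi^3$;
\item $4q/3+2$ for $b\xi\zeta$;
\item $5q/3+3$ for $d\xi^2\zeta$;
\item at least $2q+2$ for the remaining terms.
\end{itemize}
By \eqref{eq:zeta_local_expansion},
$$\zeta=-\eta^{-1}\left(\beta t^{q+1}+\kappa t^q\xi\right)+\mathcal{O}_P(t^{2q+1}).$$
Substituting this, the lowest-order contribution of $b\xi\zeta$ is $-b\beta\eta^{-1}c_{q/3+1}t^{4q/3+2}$. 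No other term on either side contributes at exponent $4q/3+2$: the term $t^2\zeta$ contributes $t^{q+3}$ and then exponents $\ge 4q/3+3$. Since $4q/3+2\equiv 2\pmod 3$, this coefficient must vanish. As $\beta\eta c_{q/3+1}\neq0$, we get $b=0$.

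\emph{Step 2 ($\kappa=0$).} With $b=0$, rewrite the cubic as
$$\zeta\,(t^2-d\xi^2-c\zeta-\dots)=a\xi^3$$
and combine it with \eqref{eq:zeta_local_expansion}. Put $\xi=t^{q/3+1}s$ with $s$ a unit at $P$. After dividing by $t^{q+3}$ this gives
$$-\eta a\,s^3=\beta+\kappa t^{q/3}s+\mathcal{O}_P(t^{2q/3}).$$
The left side is a cube, so its expansion involves only $t^{3k}$. Suppose $\kappa\neq0$. I would compare the coefficients of $t^{q/3+m}$ for $0<m<q/3$ inductively. This forces the coefficients $s_m$ to vanish for all $m$ with $m\not\equiv0\pmod 3$ in that range, and further relates $s_m$ to $s_{m/3}$ by Frobenius. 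Iterating, one is pushed to a nonzero coefficient of $s$ at some exponent $q/3^k$, which then reappears through $\kappa t^{q/3}s$ at an exponent $\not\equiv 0\pmod 3$. For $q=9$ this happens immediately at exponent $q/3+1=4$.

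This $3$-adic bookkeeping is the step I expect to be the main obstacle. If it becomes unwieldy, I would instead apply $d/dt$ to the relation. This kills $s^3$ and gives
$$\kappa\,D_t^{(1)}(t^{q/3}s)=\mathcal{O}_P(t^{2q/3-1}),$$
and I would combine it with the analogous differentiated cubic relation to bound $v_P(ds)$. A third fallback is to use the semigroup $H(P)=\langle 2q/3,q,q+1\rangle$ to show that a nonzero $\kappa$ would produce a function whose pole order at $P$ is a gap.

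\emph{Step 3 ($\alpha\neq0$).} This is immediate from non-degeneracy of the Hermitian surface. Expanding $\det H$ along the last row $(\eta,0,0,0)$, and then along the last column, gives
$$\det H=\pm\,\eta^{q+1}\left(\alpha\beta-\kappa^{q+1}\right).$$
Non-degeneracy means $\det H\neq0$. Since $\kappa=0$ by Step 2, this yields $\alpha\beta\neq0$, and in particular $\alpha\neq0$.
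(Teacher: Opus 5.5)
Your Steps 1 and 3 are the paper's arguments; the paper likewise uses $\det H=-\eta^{q+1}\alpha\beta$ once $\kappa=0$.

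For $\kappa=0$ your route is genuinely different, and leaner. The paper imports St\"ohr--Voloch information. From $A=(\frac q3-1)P$ it gets $v_P(R)=q-2$ and evaluates this as $v_P\bigl(D_t^{(2)}\zeta\,D_t^{(q)}\xi-D_t^{(2)}\xi\,D_t^{(q)}\zeta\bigr)$. It concludes that the coefficients $a_n$ of $\xi$ vanish for all $n\equiv 2\pmod 3$ with $n\le q-4$. Only then does it assume $\kappa\neq0$ and iterate the cube comparison up to $a_{(q+1)/2}\neq 0$.

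You instead notice that, under $\kappa\neq0$, the cube comparison supplies the needed vanishing by itself. For $0\le m<q/3$ with $3\nmid m$, the coefficient of $t^{q/3+m}$ in $-\eta a s^3=\beta+\kappa t^{q/3}s+\mathcal{O}_P(t^{2q/3})$ is $0$ on the left and $\kappa s_m$ on the right. Since $a_{(q+1)/2}=s_{(q-3)/6}$ and $(q-3)/6\equiv 1\pmod 3$, this closes the argument without any reference to $R$. So the ramification input is not needed for this lemma. What the paper's route buys is an unconditional, longer-range vanishing of the $a_n$, which is more than the lemma requires.

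One correction to your bookkeeping. For $3\mid m$, the comparison at $t^{q/3+m}$ reads $-\eta a\,s_{q/9+m/3}^3=\kappa s_m$. So it links $s_m$ to $s_{q/9+m/3}$, not to $s_{m/3}$.

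The chain is therefore
\[
s_0\neq0\Rightarrow s_{q/9}\neq0\Rightarrow s_{q/9+q/27}\neq0\Rightarrow\cdots\Rightarrow s_{q/9+q/27+\cdots+1}\neq0 .
\]
Each intermediate index $q/9+\cdots+q/3^k$ with $3^k<q$ is divisible by $3$, so the recursion continues. All indices stay below $q/6<q/3$, inside the window of validity. The chain terminates at the index $(q-3)/6\equiv1\pmod 3$, not at a power $q/3^k$; the two agree only for $q=9$, which is why your $q=9$ check came out right.

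With this recursion written out, your main argument is complete and the fallbacks are unnecessary.
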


\begin{proof}
    By Equation \eqref{eq:zeta_local_expansion} the local expansion at $P$ in the local parameter $t$ of the left hand side of Equation \eqref{eq:cubic_zeta} has no term of order $q+\frac{q}{3}+2$. The local expansion of $a\xi^3$ only contains terms of order divisible by $3$, in particular it has no term of order $q+\frac{q}{3} + 2$ either. If $b\neq 0$ then $v_P(b\xi\zeta) = q + \frac{q}{3} + 2$, while all other terms on the right hand side in Equation \eqref{eq:cubic_zeta} have valuation strictly larger than $q+\frac{q}{3}+2$. We conclude that $b=0$.

    Next we claim that the local expansion of the right-hand side of Equation \eqref{eq:cubic_zeta} has no term of order $q + \frac{q}{3} + 3$. Since by Equation \eqref{eq:zeta_local_expansion}, the left-hand side of Equation \eqref{eq:cubic_zeta} contains a term of order $q + \frac{q}{3} + 3$ if and only if $\kappa \neq 0$, this claim would imply $\kappa=0$.
    
    If the local expansion of the right hand side of Equation \eqref{eq:cubic_zeta} has a term of order $q + \frac{q}{3} + 3$, it have to come from $a\xi^3$, so we proceed by studying the local expansion
    $$
        \xi = \sum_{n\geq \frac{q}{3} + 1} a_nt^n, \quad \text{with} \quad a_n \in \Fqq, a_{\frac{q}{3}+1} \neq 0.  
    $$
    From the cubic equation and $b=0$ we get
    $$
        t^2 \zeta = a \xi^3 + \mathcal{O}_P\left(t^{q+2\frac{q}{3} + 3}\right),
    $$
    while Equation \eqref{eq:zeta_local_expansion} implies that
    $$
        t^2 \zeta = \frac{-\beta}{\eta}t^{q+3} + \frac{-\kappa}{\eta}t^{q+2}\xi + \mathcal{O}_P(t^{2q+3}).
    $$
    Comparing the two yields
    \begin{equation}\label{eq:xi_cubed_expansion}
         \xi^3 = \frac{-\beta}{a\eta}t^{q+3} + \frac{-\kappa}{a\eta}t^{q+2}\xi + \mathcal{O}_P\left(t^{q+2\frac{q}{3} + 3}\right).
    \end{equation}
    Note that we recover our previous observation: it holds that $\kappa = 0$ if and only if the local expansion of $\xi^3$ at $P$ has no non-zero term of order $q + \frac{q}{3} + 3$, i.e., if and only if $a_{\frac{q}{3} + \frac{q}{9} + 1} =0$.

    The key to showing that $a_{\frac{q}{3} + \frac{q}{9} + 1} =0$ turns out to be the results we have on the ramification divisor $R$. With notation as in the previous sections, recall that
    $$
        v_P(R) = v_P(E+T) = v_P(3A + T) = q-2.
    $$
    By the definition of the ramification divisor, \cite[Proposition 1.4]{stohrWeierstrassPointsCurves1986} and the fact that $t$ is a local parameter of $t$, and hence also a separating element of $F$, we have
    \begin{align*}
        v_P(R) &= v_P\left( \det 
        \begin{pmatrix}
            1 & u & v & w \\
            D_t^{(1)} 1 & D_t^{(1)}u & D_t^{(1)}v & D_t^{(1)}w \\
            D_t^{(2)}1 & D_t^{(2)}u & D_t^{(2)}v & D_t^{(2)}w \\
            D_t^{(q)}1 & D_t^{(q)}u & D_t^{(q)}v & D_t^{(q)}w
        \end{pmatrix}
        \right)+ 4(q+1) \\
        &= v_P\left( w^4 \det 
        \begin{pmatrix}
            \zeta & \xi & t & 1 \\
            D_t^{(1)}\zeta & D_t^{(1)}\xi & D_t^{(1)}t & D_t^{(1)}1 \\
            D_t^{(2)}\zeta & D_t^{(2)}\xi & D_t^{(2)}t & D_t^{(2)}1 \\
            D_t^{(q)}\zeta & D_t^{(q)}\xi & D_t^{(q)}t & D_t^{(q)}1
        \end{pmatrix}
        \right)+ 4(q+1) \\
        &= v_P \left(\det 
        \begin{pmatrix}
            \zeta & \xi & t & 1 \\
            D_t^{(1)}\zeta & D_t^{(1)}\xi & 1 & 0 \\
            D_t^{(2)}\zeta & D_t^{(2)}\xi & 0 & 0 \\
            D_t^{(q)}\zeta & D_t^{(q)}\xi & 0 & 0
        \end{pmatrix}
        \right) \\
        &= v_P \left(  D_t^{(2)}\zeta \cdot  D_t^{(q)}\xi - D_t^{(2)}\xi \cdot D_t^{(q)}\zeta
        \right).
    \end{align*}

    Since $v_P(D_t^{(2)} \zeta) \geq q-1$ and $v_P(R)=q-2$, we see that
    $$
        v_P(D_t^{(2)}\xi \cdot D_t^{(q)}\zeta) = q-2.
    $$
    By Equation \eqref{eq:zeta_local_expansion} we have $v_P(D_t^{(q)} \zeta) = 1$ so 
    $$
        v_P(D_t^{(2)}\xi) = q-3.
    $$
    On the other hand, we have 
    $$
        D_t^{(2)}\xi = D_t^{(2)}\left( \sum_{n\geq \frac{q}{3}+1} a_nt^n\right) = a_{\frac{q}{3}+1} \cdot \binom{\frac{q}{3}+1}{2} \cdot  t^{\frac{q}{3}-1}+ a_{\frac{q}{3}+2}\cdot  \binom{\frac{q}{3}+2}{2}\cdot t^{\frac{q}{3}} + \cdots.
    $$
    It follows that 
    \begin{equation}\label{eq:a_n_equal_zero}
        a_{n} = 0 \quad \text{for all} \quad n \equiv 2\pmod 3 \quad \text{with} \quad n \leq q-4. 
    \end{equation}
    We will use this, and the local information from the cubic and Hermitian relations, to show that $\kappa = 0$. In fact, suppose by contradiction that $\kappa \neq 0$. Then $\xi^3$ has a term of order $q + \frac{q}{3} + 3$ by Equation \ref{eq:xi_cubed_expansion}). Recalling the characteristic of the base field is $3$, this implies that $\xi$ has a term of order $\frac{q}{3} + \frac{q}{9} + 1$, i.e., 
    $$
        a_{\frac{q}{3} + \frac{q}{9} + 1} \neq 0.
    $$
      By Equation (\ref{eq:xi_cubed_expansion}) the above implies that $\xi^3$ has a term of order $\frac{q}{3} + \frac{q}{9} + 1 + (q+2) = q + \frac{q}{3} + \frac{q}{9} + 3$, which in turn means that $\xi$ has a term of order $\frac{q}{3} + \frac{q}{9} + \frac{q}{27} + 1$. After a finite number of repetitions we get that 
    $$
        a_{\frac{q}{3} + \frac{q}{9} + \cdots + \frac{q}{q} + 1} \neq 0,
    $$
    which contradicts Equation \eqref{eq:a_n_equal_zero} since $\frac{q}{3} + \frac{q}{9} + \cdots + \frac{q}{q} + 1 \equiv 2 \pmod 3$ and 
    $$
        \frac{q}{3} + \frac{q}{9} + \cdots + \frac{q}{q} +  1 = \frac{q+1}{2}  \leq q-4,
    $$
    for $q\geq 9$. We conclude that $\kappa = 0$.
    
    Finally, since the Hermitian variety is non-degenerate we get that $0 \neq \det H = -\eta^{q+1}\alpha \beta$, so we also have $\alpha \neq 0$ as claimed.
\end{proof}
At this point the obtain Hermitian relation between the functions $t$, $\zeta$ and $\xi$ is
\begin{equation} \label{eq:Herm_k=0}
      \eta \zeta + \beta t^{q+1} + \nu t^q \zeta + \alpha \xi^{q+1}+ \mu \xi^q \zeta + \eta^q \zeta^q + \nu^q  t\zeta^q + \mu^q \xi \zeta^q + \rho \zeta^{q+1} = 0.  
\end{equation}

Looking instead at the shape of the cubic relation, our last and more difficult step in this section is showing that  $c,d \neq 0$. Once we have $c\neq 0$ we can express $w$ as a polynomial in $u$ and $v$, which implies in particular that $F = \Fqq(u,v)$.

\begin{proposition}
    We have $c \neq 0$ and $d \neq 0$.
\end{proposition}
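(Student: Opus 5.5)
The plan is to treat the two assertions separately: $c\neq 0$ by a global argument with subfields, and $d\neq 0$ by a local expansion argument at $P$ of the kind used in the previous lemma.

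\emph{The case $c=0$.} Return to the affine form of the cubic. With $b=0$ it reads $v^2=au^3+cw+du^2+eu+f$. If $c=0$, then $u,v$ satisfy $v^2=au^3+du^2+eu+f$ with $a\neq 0$, so $K:=\Fqq(u,v)$ has genus at most $1$. Since $(u)_\infty=\frac{2q}{3}P$ and $[K:\Fqq(u)]=2$, we get $[F:K]=q/3$. Moreover $P$ is totally ramified over the unique pole $Q$ of $u$ in $K$, and $F=K(w)$. Multiplying the Hermitian relation \eqref{eq:Herm_k=0} by $w^{q+1}$ gives
$$
\eta w^q+\eta^q w = -\left(\beta v^{q+1}+\alpha u^{q+1}+\nu v^q+\nu^q v+\mu u^q+\mu^q u+\rho\right)=:h\in K .
$$
This is an additive equation whose roots differ by the $q$ elements of $\{\lambda\in\overline{\F}_{q^2}: \eta\lambda^q+\eta^q\lambda=0\}\subset\Fqq$. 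Hence $F/K$ is Galois, elementary abelian of order $q/3$, and unramified outside $Q$. I would then decompose $F/K$ into degree-$3$ Artin--Schreier subextensions and reduce $h$ (or suitable $\Fq$-linear combinations of its traces) at $Q$ to a pole order prime to $3$. The Deuring--Shafarevich formula, or a direct Hasse--Arf different computation at the totally ramified place $P$, then gives $g(F)$. The aim is to show that the result cannot equal $q(q-1)/6$. I expect the pole orders to be controlled by $v_Q(v^{q+1})=-3(q+1)$ and $v_Q(u^{q+1})=-2(q+1)$.

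As a cross-check on the same case, there is a semigroup argument. Every element of $H(Q)$ multiplied by $q/3$ lies in $H(P)=\langle 2q/3,q,q+1\rangle$. On the other hand, $w$ has pole order $q+1\not\equiv 0\pmod{q/3}$, which must be consistent with the ramification jumps. Comparing with the known Weierstrass semigroup should rule out $c=0$ quickly if the genus computation gets messy.

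\emph{The case $d=0$.} Here I would work locally at $P$ with the parameter $t$, as in the proof that $\kappa=0$. Since $b=\kappa=0$, \eqref{eq:Herm_k=0} gives a refined expansion
$$
-\eta\zeta=\beta t^{q+1}+\alpha\xi^{q+1}+\nu t^q\zeta+\mathcal O_P\left(t^{2q+1}\right).
$$
Substituting into \eqref{eq:cubic_zeta} expresses $a\xi^3$ as $t^2\zeta-d\xi^2\zeta-c\zeta^2-\dots$. The term $d\xi^2\zeta$ has order $\frac{5q}{3}+3$, the smallest order not divisible by $3$ that can occur on the right-hand side beyond $t^2\zeta$. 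Since $\xi^3$ has only exponents divisible by $3$, this coefficient must be cancelled by a term of the same order coming from $t^2\zeta$. Conversely, if $d=0$, the Frobenius-type recursion (a coefficient $a_n$ of $\xi$ forces a coefficient of $\xi^3$, and hence $a_{n/3+\cdots}$) propagates a nonzero coefficient to some index $n\equiv 2\pmod 3$ with $n\le q-4$. That contradicts \eqref{eq:a_n_equal_zero}, which came from $v_P(R)=q-2$.

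I expect the main obstacle to be the bookkeeping in the $c=0$ genus computation, namely finding the correct reduced pole orders for the degree-$3$ subextensions. If that fails, I would instead apply the local method uniformly. When $c=0$, the term $c\zeta^2$ of order $2q+2$ disappears, and I would look for a coefficient of $\xi$ in the expansion forced to be nonzero at an index $\equiv 2\pmod 3$ below $q-4$.
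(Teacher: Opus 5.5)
\textbf{There is a genuine gap in each half; the $c\neq 0$ part is the serious one.}

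\textbf{The case $c=0$.} The Hasse--Arf / Deuring--Shafarevich computation you propose cannot give a contradiction, because when $c=0$ and $K=\Fqq(u,v)$ is elliptic, all these invariants agree exactly with $g(F)=q(q-1)/6$.
\begin{itemize}
\item Take any index-$3$ subgroup $V_1$ of the Galois group. Then $z=\prod_{\theta\in V_1}(w-\theta)$ generates the corresponding cubic Artin--Schreier subextension of $F/K$.
\item Since $v_P(w)=-(q+1)$ and $P$ is totally ramified, $z$ has a single pole there of order exactly $q+1$. This order is already prime to $3$.
\item Hence every degree-$3$ subextension has conductor exponent $q+2$. The conductor--discriminant formula gives $\deg\mathrm{Diff}(F/K)=(q/3-1)(q+2)=(q^2-q-6)/3=2g_3-2$. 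This is precisely Hurwitz with $g(K)=1$.
\item Deuring--Shafarevich only returns $p$-rank $0$, consistent with $K$ being supersingular.
\item Your semigroup check also passes: $\frac{q}{3}H(Q)=\{0,\frac{2q}{3},q,\frac{4q}{3},\dots\}\subseteq\langle\frac{2q}{3},q,q+1\rangle$.
\end{itemize}
So what is missing is an invariant of maximality finer than genus, $p$-rank and Weierstrass semigroup.

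The paper uses the fact that $\mathcal{C}^n$ annihilates every regular differential of an $\Fqq$-maximal function field, and hence of the subfield $E_1=E_0(z)$, where $z^3-z=h$ and $v_{Q_0}(h)=-(q+1)$.
\begin{itemize}
\item Since $(du/v)_{E_1}=\mathrm{Diff}(E_1/E_0)=2(q+2)Q_1$, the differential $z\,du/v$ is regular.
\item Yet $\mathcal{C}^n(z\,du/v)=-\mathcal{C}^n(h\,du/v)$ contains the nonzero term $a_{-(q+1)}^{1/q}t_0^{-2}dt_0$.
\end{itemize}
Your fallback ``local method'' is not an argument as stated. You never say which coefficient would be forced to be nonzero. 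Moreover, the natural source of an exponent $\not\equiv 0\pmod 3$, namely $c\zeta^2$ with $v_P(c\zeta^2)=2q+2$, is exactly the term that vanishes when $c=0$.

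\textbf{The case $d\neq 0$.} Your key claim is false: $\frac{5q}{3}+3\equiv 0\pmod 3$ for $q\geq 9$. So the leading term of $d\xi^2\zeta$ can be matched by $a\xi^3$. Had that order really been prime to $3$, your reasoning would prove $d=0$, since $t^2\zeta$ has no term of that order.

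More importantly, your argument never uses $c\neq 0$, and that is the essential input. If $c=0$, the elliptic field $\Fqq(u,v)$ is maximal, hence supersingular, which forces $d=0$.

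The correct local step is as follows.
\begin{itemize}
\item By \eqref{eq:Herm_k=0}, $\zeta=-\frac{\beta}{\eta}t^{q+1}+\mathcal{O}_P(t^{2q+1})$. Hence $t^2\zeta$ has no term of order $2q+2$.
\item The term $c\zeta^2$ has nonzero coefficient $c\beta^2/\eta^2$ in order $2q+2\equiv 2\pmod 3$. The term $a\xi^3$ cannot cancel it.
\item The terms $e\xi\zeta^2$ and $f\zeta^3$ have larger valuation.
\item Therefore only $d\xi^2\zeta$ can supply the cancelling term, so $d\neq 0$.
\end{itemize}
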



\begin{proof}
    Suppose by contradiction that $c = 0$, so that the cubic relation reads
    $$
        v^2 = au^3 + du^2 + eu+f.
    $$
    Consider the function field $E_0 := \Fqq(u,v) \subseteq F$. We claim that $E_0$ is an elliptic function field. If not, the polynomial $aT^3 + dT^2 + eT+f \in \Fqq[T]$ has a repeated root, i.e,
    $$
        au^3 + du^2 + eu + f = a(u-r_1)^2(u-r_2),
    $$
    for some not necessarily distinct $r_1,r_2 \in \Fqq$ (see \cite[Proposition 6.1.3]{stichtenothAlgebraicFunctionFields2009}). This would mean that 
    $$
        h := \frac{v}{u-r_1} \in \mathcal{L} \left(\frac{q}{3}P\right) \subseteq F,
    $$
    since 
    $$
        h^2 = a(u-r_2)\in \mathcal{L}\left(\frac{2q}{3}P\right),
    $$
    in contradiction with $H(P) = \langle 2q/3, q, q+1\rangle$. We conclude that $E_0$ is elliptic. Moreover, considering the ramification of the pole of $u$ in the extension $E_0/\Fqq(u)$, one can conclude that $[F:E_0] = q/3$. Further, maximality of $F$ implies that $E_0$ is maximal, so $E_0$ is supersingular and it follows from \cite[V Theorem 4.1]{silvermanArithmeticEllipticCurves2009} that $d=0$.

    After multiplying Equation \eqref{eq:Herm_k=0} with $w^{q+1}$ we get
    $$
        \eta w^q + \eta^q w = L(u,v) \in \Fqq[u,v],
    $$
    where
    $$
        L(u,v) = - \left(\beta v^{q+1} + \alpha u^{q+1} + \nu v^q + \mu u^q  + \nu^q v + \mu^q u + \rho\right).
    $$
    Note that $\eta T^q + \eta^qT - L(u,v) \in E_0[T]$ is reducible since $[F:E_0] = q/3$, so the minimal polynomial of $w$ over $E_0$ must be  a proper factor of it. The additive separable polynomial $\eta T^q + \eta^q T \in \Fqq[T]$ has all its roots in $\Fqq$. In fact, if $\theta \neq 0$ is a root then
    $$
        \theta^{q^2-1} = \left(\theta^{q-1}\right)^{q+1} = \left(-\eta^{q-1}\right)^{q+1} = \eta^{q^2-1} = 1. 
    $$
    It follows that $F$ is the splitting field over $E_0$ of the separable polynomial $\eta T^q + \eta^q T - L(u,v)$, in particular $F/E_0$ is a Galois extension. The Galois group is an elementary abelian group which can be described as
    $$
        \mathrm{Gal}(F/E_0) = \{ \sigma_\theta: w \mapsto w+\theta \mid \theta \in V\},
    $$
    where $V$ is some additive subgroup of order $q/3$ of the roots of $\eta T^q + \eta^q T$.

    Let $V_1$ be a fixed subgroup of $V$ with $|V/V_1| = 3$ and let $G_1$ be the corresponding subgroup of $\mathrm{Gal}(F/E_0)$, i.e.,
    $$
        G_1 := \{\sigma_\theta \mid \theta \in V_1\} \subseteq \mathrm{Gal}(F/E_0).
    $$
    The polynomial 
    $$
        L_1 := \prod_{\theta \in V_1} (T-\theta) \in \Fqq[T]
    $$
    is additive (see \cite[Theorem 3.52]{lidlFiniteFields2008}) and $z := L_1(w)$ is fixed by $G_1$. Our goal is to show that the function field 
    $$
        E_1 := E_0(z)\subseteq F
    $$
    cannot be maximal, which is a contradiction since it is a subfield of $F$. 
    
    First, we note that $E_1$ is the whole fixed field $F^{G_1}$ of $G_1$. Clearly $E_1$ is contained in $F^{G_1}$, whence $[F:E_1] \ge |V_1|=q/9$. On the other hand, $F=E_1(w)$ and $L_1(w) \in E_1$, whence $[F:E_1] \le \deg L_1=|V_1|$.

    Now pick an element $\theta_0 \in V\setminus V_1$. The Galois group of $E_1/E_0$ is generated by the image of $\sigma_{\theta_0}$ in $\mathrm{Gal}(F/E_0)/G_1$. Moreover, 
    $$
        \sigma_{\theta_0}(z) = \prod_{\theta \in V_1} (w+\theta_0 + \theta ) = L_1(w+\theta_0) = z + L_1(\theta_0),
    $$
    by additivity of $L_1$. After replacing $z$ by $z/L_1(\theta_0)$ we may therefore assume that $\mathrm{Gal}(E_1/E_0)$ maps $z$ to $z+1$ and $z+2$, which means that a defining equation for $E_1$ over $E_0$ is
    $$
        z^3 - z = h
    $$
    for some $h \in E_0$.

    Let $Q_0$ be the unique place lying below $P$ in $E_0$, i.e., the common pole of $u$ and $v$ in $E_0$, and let $Q_1$ be the place lying above $Q_0$ (and below $P$) in $E_1$. Then, since $v_P(w)=-(q+1)$ and $z=L_1(w)$, we have
    $$
        v_{P}(z) = -|V_1|\cdot (q+1).
    $$
    As $z$ has no other poles in $F$, since $w$ has no other poles in $F$, it follows that $Q_1$ is the only pole of $z$ in $E_1$ and that
    $$
        v_{Q_1}(z) = -(q+1).
    $$
    This in turn implies that
    \begin{equation}\label{eq:valuation_h}
        v_{Q_0}(h) = -(q+1)        
    \end{equation}
    and that $Q_0$ is the only pole of $h$. We conclude that
    $$
        \mathrm{Diff}(E_1/E_0) = 2(q+2)Q_1
    $$
    (see \cite[Proposition 3.7.8]{stichtenothAlgebraicFunctionFields2009}).

    Recall that $q = 3^n \geq 9$, and that the $n$'th power of the Cartier operator, $\C$, maps all regular differentials of an $\mathbb{F}_{p^{2n}}$-maximal function field to zero (see \cite[Theorem 3.3]{garciaCertainMaximalCurves2008}). We would like to show that the differential $z \omega$, with $\omega := du/v$, is regular and that $\C^n(z\omega) \neq 0$. \newline

    \emph{Claim 1:} The differential $z\omega$ is regular. \newline

    It is well known that $\omega = du/v$ is a regular differential on $E_0$; the divisor of $\omega$ in $E_0$ is $0$ (see the comment after the proof of Proposition 6.1.3 in \cite{stichtenothAlgebraicFunctionFields2009}). Moreover, since the space of regular differentials on an elliptic function field has dimension $1$ we get from maximality of $E_0$ and \cite[Theorem 3.3]{garciaCertainMaximalCurves2008} that $\C(\omega) = 0$. 

    It follows from \cite[Theorem 3.4.6]{stichtenothAlgebraicFunctionFields2009} that the divisor of $\omega$ when viewed as a differential on $E_1$ is 
    $$
        (\omega)_{E_1} = \mathrm{Diff}(E_1/E_0) = 2(q+2)Q_1.
    $$
    It follows that
    $$
        v_{Q_1}(z\omega) =  -(q+1) + 2(q+2) = q + 3,
    $$
    and since $z$ has no other poles we get that $z\omega$ is a regular differential. \newline

    \emph{Claim 2:} We have $\C^n(z\omega) \neq 0$. \newline

    From $z = z^3-h$ we get 
    $$
        \C(z\omega) = \C(z^3\omega - h\omega) = z\C(\omega) - \C(h\omega) = -\C(h\omega),
    $$
    by properties of the Cartier operator (see, e.g., \cite[Exercise 4.12]{stichtenothAlgebraicFunctionFields2009}), and it follows that
    $$
        \C^n(z\omega) = - \C^n(h\omega).
    $$
    The restriction of $\C$ on $E_1$ to $E_0$ is exactly the Cartier operator on $E_0$, so the above means that we just have to show $\C^n(h\omega) \neq 0$, where $h\omega$ is seen as a differential on $E_0$. 

    Let $t_0$ be a local parameter at $Q_0$. We consider the expansion
    $$
        h \omega = \sum_{-(q+1)\leq i} a_i t_0^i dt_0,
    $$
    where $a_i \in \Fqq$ and $a_{-(q+1)}\neq 0$ using Equation \eqref{eq:valuation_h}. For integers $m$ and $k$ with $0\leq k \leq q-1$ it follows from properties of the Cartier operator (see \cite[Remark 3.1]{garciaCertainMaximalCurves2008}) that 
    $$
        \C^n(t_0^{mq+k}dt_0) = 
        \begin{cases}
            0 & \text{ if } k < q-1 \\
            t_0^m dt_0 & \text{ if } k = q-1.
        \end{cases}
    $$
    Hence,
    $$
        \C^n(h\omega) = \left( a_{-(q+1)}^{1/q} t_0^{-2} + a_{-1}^{1/q}t_0^{-1} + a_{q-1}^{1/q} + \text{ higher order terms} \right) dt_0,
    $$
    and since $a_{-(q+1)} \neq 0$ we obtain that $C^n(h\omega) \neq 0$ as wished. The claim follows and we obtain a contradiction, so we conclude that $c \neq 0$. \newline

    The only thing left to show is that $d \neq 0$. To see this, we compare once more the Hermitian and cubic relation. From $c\neq0$ we get that $v_P(c\zeta^2)=2q+2$, and the only other terms in the cubic relation \eqref{eq:cubic_zeta} (with $b=0$) that might have a local power series expansion at $P$ with a non-zero term of order $2q+2$ are $d\xi^2\zeta$ and $t^2\zeta$. However, one can see from the Hermitian relation \eqref{eq:Herm_k=0} that the local expansion of $\zeta$ has no term of order $2q$, so we must have $d\neq0$. This finishes the proof. 
\end{proof}

The results above can be restated as follows:

\begin{corollary}\label{cor:cubic_hermitian_shape}
    We can pick $u,v,w \in \mathcal{L}(D)$ with
    $$
        v_P(u) = -\frac{2q}{3}, \quad v_P(v) = -q, \quad \text{and} \quad v_P(w) = -(q+1)
    $$
    such that
    $$
        w = u^3 + \delta u^2 -v^2,
    $$
    for some $\delta \in \Fqq^\times$. Moreover, the image of $\X$ under the map $\pi := (1:u:v:w)$ is contained in the Hermitian surface defined by
    $$
        H := 
        \begin{pmatrix}
            \rho & \mu^q & \nu^q & \eta^q \\
            \mu & \alpha & 0 & 0 \\
            \nu & 0 & \beta & 0 \\
            \eta & 0 & 0 & 0
        \end{pmatrix},
    $$
    where $\alpha, \beta, \rho \in \Fq$, $\eta,\mu,\nu \in \Fqq$ and $\alpha\beta\eta \neq 0$.
\end{corollary}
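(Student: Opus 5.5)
This is essentially a renormalization of the earlier results, so the plan is to start from the cubic relation $v^2 = au^3 + (bu+c)w + du^2 + eu + f$ and the Hermitian matrix $H$ established in the preceding lemmas. We already know $a\neq 0$, $b=0$, $\kappa=0$, $\alpha\beta\eta\neq 0$, and, by the last proposition, $c\neq 0$ and $d\neq 0$. Because $c\neq 0$, the cubic relation can be solved for $w$:
$$
    w = -\tfrac{a}{c}u^3 - \tfrac{d}{c}u^2 + \tfrac{1}{c}v^2 - \tfrac{e}{c}u - \tfrac{f}{c}.
$$
The task is then to absorb the constants and the lower-order terms $eu+f$ into a change of basis of $\mathcal{L}(D)$. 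We must also check that such a change preserves both the pole orders and the stated shape of $H$.

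First I would replace $w$ by $w' := -c\,w/a' $ after rescaling $u$ and $v$. Concretely, set $u = \lambda u'$ and $v = \gamma v'$, and choose $\lambda,\gamma\in\Fqq^\times$ so that the coefficients of $u'^3$ and $v'^2$ become $1$ and $-1$, respectively, after dividing by a suitable constant. Since $\Fqq$ is a finite field, it is perfect and contains square roots only when the relevant elements are squares. The best approach is therefore to scale $w$ as well: write $w = \epsilon w'$ with $\epsilon\in\Fqq^\times$. The equation becomes $\epsilon w' = -\frac{a}{c}\lambda^3 u'^3 - \frac{d}{c}\lambda^2u'^2 + \frac{\gamma^2}{c}v'^2 + \dots$. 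Take $\gamma=1$ and $\epsilon = 1/c$, which normalizes the $v'^2$ coefficient to $1$ and gives $w'=-v'^2+\dots$ after replacing $w'$ by $-w'$. Next choose $\lambda$ with $\lambda^3 = c\epsilon\cdot(\text{sign})/(-a)$. This is always solvable in characteristic $3$, because cubing is a bijection of $\Fqq$. That makes the $u'^3$ coefficient equal to $1$, and $\delta$ is then the resulting (nonzero) multiple of $d$. Finally, replace $w'$ by $w' + e'u' + f'$. This is legitimate because $u',1\in\mathcal{L}(D)$, and it kills the linear and constant terms without changing $v_P(w')=-(q+1)$. The pole orders of $u'$ and $v'$ are unchanged since only scalar multiples were taken.

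Second, I would verify that the Hermitian surface still has the claimed shape in the new coordinates $(1:u':v':w')$. The coordinate change is a triangular linear map: it sends $X_1$ and $X_2$ to scalar multiples and $X_3$ to $\epsilon' X_3 + e'X_1 + f'X_0$. Transforming $H$ to $\bar{S}^{T}HS$ preserves the zero pattern in the lower-right $3\times 3$ block. For the $(3,3)$, $(1,3)$ and $(2,3)$ entries this follows from the osculating-plane argument already given: the new point $\pi(P)$ is still $[0:0:0:1]$ with osculating plane $X_0=0$, so these entries must vanish. For $\kappa$ it follows from rerunning the previous lemma, or directly, since the $(1,2)$ entry of the transformed block is a scalar multiple of $\kappa=0$. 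Nondegeneracy, being Hermitian, and $\alpha\beta\eta\neq 0$ are preserved because $\det$ changes by a nonzero norm factor and the diagonal entries $\alpha,\beta$ are multiplied by norms $\lambda^{q+1}$ and $\gamma^{q+1}$. The main point needing care is bookkeeping the scalars so that both normalizations (coefficient $1$ on $u^3$ and $-1$ on $v^2$) hold simultaneously. The freedom to cube-root in characteristic $3$ is what makes this work.
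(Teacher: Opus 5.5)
Your proposal is correct and follows the paper's argument. The paper also rescales $u$ by a cube root of $a$ and replaces $w$ by $-(cw+eu+f)$, which is your scale-then-translate step for $w$. For the Hermitian matrix, the paper observes that only $\eta w^q+\eta^q w$ involve $w$, so the substitution only adds to the $\mu$ and $\rho$ entries; this is the same point as your triangular change-of-coordinates argument.
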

    
\begin{proof}
    This is just changing $u$ by a scalar $\lambda$ with $\lambda^3=a$ and replacing $w$ with $-(cw+eu+f)$. The Hermitian matrix changes accordingly but remains of the same sparse shape since the only terms involving $w$ were $\eta w^q$ and $\eta^qw$ for which the change of coordinates only results in terms with exponents that were already there.
\end{proof}

\subsection{The defining equation}

Since $\pi = (1:u:v:w)$ is an embedding we have $F = \Fqq(u,v,w)$, which we will use as our starting point. It follows immediately from Corollary \ref{cor:cubic_hermitian_shape} that $w\in\Fqq[u,v]$, so in fact 
$$
    F = \Fqq(u,v,w) = \Fqq(u,v).
$$
We will proceed by studying the extension $F/\Fqq(v)$.

\begin{proposition}\label{prop:different_F_over_K_v}
    We have 
    $$
        \div (dv) = (2g-2)P,
    $$
    and the extension $F/\Fqq(v)$ is separable of degree $q$, with
    $$
        \operatorname{Diff}(F/\Fqq(v)) =  \left(\frac{q^2+5q-6}{3}\right) P.
    $$
    In particular, the pole of $v$ is totally ramified in $F/\Fqq(v)$ and it is the only ramified place.
\end{proposition}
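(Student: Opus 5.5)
We first note that $v$ has pole divisor exactly $qP$, so $[F:\Fqq(v)]=q$. The heart of the argument is to show $\div(dv)=(2g-2)P$; once this holds, the remaining claims follow formally. Indeed, $dv\neq 0$ then gives separability. Moreover, writing $Q_\infty$ for the pole of $v$ in $\Fqq(v)$, the standard formula $\div(dv)=-2(v)_\infty+\operatorname{Diff}(F/\Fqq(v))$ yields
$$
\operatorname{Diff}(F/\Fqq(v)) = (2g-2)P + 2qP = \left(\frac{q(q-1)}{3}-2+2q\right)P = \frac{q^2+5q-6}{3}P.
$$
Since $P$ is the only place above $Q_\infty$ and has degree one, $e(P|Q_\infty)=q$, so $Q_\infty$ is totally ramified. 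Since the different is supported only at $P$, no other place ramifies.

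To compute $\div(dv)$, we work with the local parameter $t=v/w$ at $P$ and the Hermitian relation. Corollary~\ref{cor:cubic_hermitian_shape} lets us write the Hermitian relation in the coordinates $u,v,w$ as
$$
\eta w^q+\eta^q w = -\left(\beta v^{q+1}+\alpha u^{q+1}+\nu v^q+\mu u^q+\nu^q v+\mu^q u+\rho\right).
$$
Differentiating kills all $q$-th powers, giving
$$
\eta^q\,dw = -(\beta v^q+\nu^q)\,dv-(\alpha u^q+\mu^q)\,du .
$$
Differentiating the cubic $w=u^3+\delta u^2-v^2$ in characteristic $3$ gives $dw = 2\delta u\,du - 2v\,dv$. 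Equating the two expressions for $dw$ gives
$$
\left(2\eta^q\delta u+\alpha u^q+\mu^q\right)du = \left(2\eta^q v-\beta v^q-\nu^q\right)dv .
$$
Thus $du/dv$ equals the rational function $\phi(v)/\psi(u)$, where $\phi(v)=2\eta^q v-\beta v^q-\nu^q$ and $\psi(u)=\alpha u^q+2\eta^q\delta u+\mu^q$. In particular $dv\neq 0$; otherwise $\psi(u)\,du=0$ with $du\neq 0$, which would force $\psi(u)=0$ and contradict $u\notin\Fqq$. (If instead $du=0$, then $u$ would be a cube, contradicting $v_P(u)=-2q/3$ together with the semigroup $H(P)$.)

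The main obstacle is showing that $dv$ has no zeros or poles away from $P$. Since $\deg\div(dv)=2g-2$, it suffices to prove $v_Q(dv)\ge 0$ for all $Q\neq P$, or dually that the only possible zero is at $P$. At any place $Q\neq P$ the functions $u$ and $v$ are regular. We cover the affine part by the two functions $u$ and $v$, using the relation $\psi(u)\,du=\phi(v)\,dv$. At a place $Q$ where $v-v(Q)$ is a uniformizer, $dv$ is a unit. Where $u-u(Q)$ is a uniformizer, $du$ is a unit, and then we need $v_Q(\phi(v))\ge v_Q(\psi(u))$. I would attempt this by combining it with the fact that the plane model $w=u^3+\delta u^2-v^2$, together with the Hermitian equation, is nonsingular away from $P$, since $\pi$ is an embedding. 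Smoothness at $Q$ means the Jacobian of the two defining equations in $(u,v,w)$ has rank $2$. That Jacobian has rows $(\psi(u),\,-\phi(v),\,0)$, up to the factor $\eta^q$ and after substitution, and $(2\delta u,\,-2v,\,-1)$; up to sign, its $2\times2$ minors are $\phi(v)$, $\psi(u)$, and $2\delta u\,\phi(v)-2v\,\psi(u)$. So $\phi(v)$ and $\psi(u)$ cannot vanish simultaneously at $Q$. Where $\phi(v(Q))=0$, the relation forces $v$ to be a uniformizer-type coordinate, since $du$ and $dv$ cannot both vanish at a smooth point. With care this shows $v_Q(dv)=0$ for every $Q\neq P$. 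Finally, the pole order of $dv$ at $P$ is then forced by degree: $v_P(dv)=2g-2$.
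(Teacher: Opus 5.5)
\textbf{There is a genuine gap in your proposal.} Your formal deductions from $\div(dv)=(2g-2)P$ are fine: separability, the different, total ramification at the pole of $v$, and the absence of other ramified places. The relation $\psi(u)\,du=\phi(v)\,dv$ is also correct. But the step that carries all the weight, namely that $dv$ has no zeros away from $P$, does not go through.

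\emph{The reduction is misstated.} The condition $v_Q(dv)\ge 0$ for $Q\neq P$ is automatic, since $v$ is regular there. It does not determine $v_P(dv)$. What you actually need is $v_Q(dv)\le 0$.

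\emph{The Jacobian argument is invalid.} The curve $\pi(\X)$ has degree $q+1$, while the intersection of the cubic and the Hermitian surface has degree $3(q+1)$. So $\pi(\X)$ is only one component of that intersection, and its smoothness says nothing about the rank of the Jacobian of the two equations. Your conclusion is in fact false. By the identities proved later in the paper (Proposition~\ref{prop:shape_of_phi} and Lemma~\ref{lemma:G_v}), one has $\psi(u)=\mu^q-\alpha G(v)$ and $\gamma\,\phi(v)=-G'(v)\,\psi(u)$. Hence $\phi(v)$ vanishes at every zero of $\psi(u)$. Moreover $\psi(u)$, being a non-constant polynomial in $v$, certainly has affine zeros. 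In the coordinates of Theorem~\ref{thm:main}, these are the $2q^2/3$ rational points with $x\in\Fq$ of absolute trace $\pm1$. At these points the two surfaces are tangent, even though the curve is smooth.

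\emph{The claimed non-simultaneous vanishing would work against you anyway.} Take a zero $Q$ of $\psi(u)$ with $\phi(v(Q))\neq 0$. Your relation would then give $v_Q(dv)=v_Q(\psi(u))+v_Q(du)\ge 1$, a zero of $dv$ off $P$. What is really needed is $v_Q(\phi(v))\ge v_Q(\psi(u))$ wherever $du$ is a unit, i.e.\ that $\psi(u)$ divides $\phi(v)$ locally. That is essentially the statement itself, and no local smoothness consideration supplies it.

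\emph{How to repair it.} Reverse your logic, as the paper does: compute $v_P(dv)$ directly and let the degree rule out zeros elsewhere.
\begin{itemize}
\item Eliminate $du$ rather than $dw$ from the two differentiated relations. This gives $(\delta\beta uv^q+(\alpha u^q+\mu^q)v+\delta\nu^q u)\,dv=(\alpha u^q-\eta^q\delta u+\mu^q)\,dw$.
\item Since $3\nmid q+1$, one has $v_P(dw)=-(q+2)$. By contrast, $du$ is awkward at $P$ because $3\mid 2q/3$.
\item The two coefficients have pole orders $q^2+2q/3$ and $2q^2/3$, so $v_P(dv)=(q^2-q-6)/3=2g-2$.
\item Combined with the automatic $v_Q(dv)\ge 0$ for $Q\neq P$ and $\deg\div(dv)=2g-2$, this forces $\div(dv)=(2g-2)P$.
\end{itemize}
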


\begin{proof}
    The extension $F/\Fqq(v)$ has degree $q$ since $(v)_\infty = qP$ (see \cite[Theorem 1.4.11]{stichtenothAlgebraicFunctionFields2009}), so the pole of $v$ is totally ramified. Moreover, $dv \neq 0$ since $v$ is not a cube (otherwise $q/3$ would be a non-gap at $P$, in contradiction with Corollary \ref{cor:special_place}).

    The cubic and the Hermitian relation from Corollary \ref{cor:cubic_hermitian_shape} imply that
    $$
        dw = -\delta u du + vdv,
    $$
    and 
    $$
        (\alpha u^q + \mu^q)du + (\beta v^q+\nu^q)dv + \eta^q dw = 0.
    $$
    Eliminating $du$, we find that
    $$
        (\delta\beta u v^q + (\alpha u^q + \mu^q)v + \delta\nu^qu) dv = (\alpha u^q - \eta^q \delta u + \mu^q)dw.
    $$
    Since $v_P(w) = -(q+1)$ we get immediately 
    that $v_P(dw) = -(q+2)$, so the above implies
    \begin{align*}
        v_P(dv) &= -v_P(\delta\beta u v^q + (\alpha u^q + \mu^q)v + \delta\nu^qu) + v_P(\alpha u^q - \eta^q \delta u + \mu^q) - (q+2) \\
        &= \frac{2q}{3} + q^2 - \frac{2q^2}{3} - (q+2) \\
        &= \frac{q^2-q-6}{3} \\
        &= 2g-2.
    \end{align*}
    Since the degree of the canonical divisor $(dv)$ is exactly $2g-2$, and since $v$ has no poles away from $P$, we conclude that $(dv) = (2g-2)P$ as wished. 

    From \cite[Remark 4.3.7]{stichtenothAlgebraicFunctionFields2009} we get that
    $$
        \mathrm{Diff}(F/\Fqq(v)) = (dv) + 2(v)_\infty = \left(\frac{q^2+5q-6}{3}\right) P,
    $$
    and it follows that the pole of $v$ is the only ramified place in $F/\Fqq(v)$.

\end{proof}

The first step towards describing the defining equation of $F$ is the following lemma:

\begin{lemma}\label{lem:integralbasis}
    The minimal polynomial of $u$ over $\Fqq(v)$ is a polynomial over $\Fqq[v]$ of degree $q$. Moreover,
    $$
        1, u, u^2, \dots, u^{q-1}
    $$
    is an integral basis of $F/\Fqq(v)$ at any place $Q$ of $\Fqq(v)$ different from the pole of $v$.
\end{lemma}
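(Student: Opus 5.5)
Throughout, write $K := \Fqq(v)$ and let $\mathcal{O}$ denote the integral closure of $\Fqq[v]$ in $F$.

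\emph{Step 1: $u$ is integral of degree $q$.} By Proposition \ref{prop:different_F_over_K_v}, $[F:K]=q$ and $P$ is the only place of $F$ above the pole of $v$. Since $u$ has no poles away from $P$, $u$ is integral over $\Fqq[v]$, so its minimal polynomial $\varphi(T)$ lies in $\Fqq[v][T]$. To see that $\deg\varphi = q$, i.e.\ $F=K(u)$, I argue by contradiction. If $[K(u):K]=m<q$, then $m$ is a proper divisor of $q$, hence a power of $3$ with $m\le q/3$. The place of $K(u)$ under $P$ is totally ramified over $K$ with index $m$, so in $F$ we have $v_P(v) = -q$ while the valuation of $u$ is $(q/m)$ times its valuation in $K(u)$, which is divisible by $q/m\ge 3$. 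Thus $u$ and $v$ both lie in the subfield $K(u)$, whose pole orders at $P$ are all multiples of $q/m$. In particular $F=\Fqq(u,v)\subseteq K(u)$, contradicting $m<q$. (Equivalently: $F=\Fqq(u,v)=K(u)$ directly, which is already enough.)

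\emph{Step 2: $1,u,\dots,u^{q-1}$ is an integral basis away from the pole of $v$.} It suffices to show $\mathcal{O}=\Fqq[v][u]$, or locally that the discriminant of $\varphi$ has no zeros at finite places. For a finite place $Q$ of $K$, the index $[\mathcal{O}_Q:\mathcal{O}_{K,Q}[u]]$ is measured by
$$v_Q(\operatorname{disc}\varphi)=v_Q\bigl(N_{F/K}(\varphi'(u))\bigr),$$
compared against the different exponent at $Q$. By Proposition \ref{prop:different_F_over_K_v}, no place above $Q$ is ramified, so the different is trivial there. Hence $1,\dots,u^{q-1}$ is a local integral basis at $Q$ exactly when $\varphi'(u)$ has no zeros above $Q$ (Dedekind's criterion, \cite[Theorem 3.5.10]{stichtenothAlgebraicFunctionFields2009}). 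So I must show that $\varphi'(u)$ has divisor supported only at $P$.

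The key tool is the differential relation obtained in the proof of Proposition \ref{prop:different_F_over_K_v}. Differentiating $\varphi(u)=0$ gives
$$\varphi'(u)\,du = -\varphi_v(u)\,dv,$$
where $\varphi_v$ denotes the partial derivative of $\varphi$ in $v$. Since $\varphi'(u)\ne0$ by separability and $(dv)=(2g-2)P$, a zero of $\varphi'(u)$ at a place $Q'\ne P$ would force a zero of $\varphi_v(u)\,dv/du$ there. I would rather argue via degrees. The element $\varphi'(u)$ is integral, with its only pole at $P$ of order at most $(q-1)\cdot 2q/3$. Moreover,
$$v_P(\varphi'(u))=\deg\operatorname{Diff}-\bigl(\text{index contributions}\bigr).$$
The cleanest route is the local identity at $P$, with $\mathcal O_P$ totally ramified: $\operatorname{Diff}$ at $P$ satisfies $d_P\le v_P(\varphi'(u))$ plus correction terms.

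Instead I would use the global formula
$$\div(\varphi'(u)) = \operatorname{Diff}(F/K) + 2\,\mathrm{(ind)} - (\text{pole part at }P),$$
combined with the total degree $0$ of a principal divisor. Concretely, $\deg$ of the zero part of $\varphi'(u)$ equals $\deg$ of its pole part, which is $v_P$-computable. The pole order is $-v_P(\varphi'(u))$; I will show it equals $\frac{q^2+5q-6}{3}-2\cdot\frac{2q}{3}\cdot\frac{q}{2}\cdots$. Matching this against the exact different degree then shows that the zero part beyond the different vanishes, so that the index is trivial at every finite $Q$.

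The main obstacle is computing $v_P(\varphi'(u))$ exactly. I would do this with $\varphi'(u)=\prod_{\sigma\ne1}(u-\sigma u)$ over the conjugates in a Galois closure, or via the equality $\varphi'(u)=-\varphi_v(u)\,dv/du$ together with $v_P(du)$ and $v_P(dv)$, both known from the proof of Proposition \ref{prop:different_F_over_K_v}.
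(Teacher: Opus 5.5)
Your Step 1 is fine: the parenthetical observation $F=\Fqq(u,v)=\Fqq(v)(u)$ already gives degree $q$, and $u$ is integral because $P$ is the only place over the pole of $v$.

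\textbf{The gap is in Step 2.} Your reduction via \cite[Theorem 3.5.10]{stichtenothAlgebraicFunctionFields2009} to showing that $\varphi'(u)$ has no zeros outside $P$ is correct. But $\varphi'(u)$ is regular outside $P$, so this condition is equivalent to $\varphi'(u)$ being a nonzero constant, i.e.\ $v_P(\varphi'(u))=0$. That is exactly the fact the paper only obtains afterwards, in Proposition \ref{prop:shape_of_phi}, \emph{from} this lemma. You never compute $v_P(\varphi'(u))$, and none of your suggested routes does so:
\begin{itemize}
\item A Dedekind-type inequality $d_P\le v_P(\varphi'(u))$ is not available at $P$, because $u$ is not integral over the valuation ring of the pole of $v$. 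It would in fact be false: the different exponent at $P$ is $\frac{q^2+5q-6}{3}>0$, while in truth $v_P(\varphi'(u))=0$.
\item Combining $\varphi'(u)\,du=-\varphi_v(u)\,dv$ with the differential identity from the Hermitian and cubic relations only yields $v_P(\varphi'(u))=v_P(\varphi_v(u))+q^2/3$. This merely moves the unknown to $\varphi_v(u)$.
\item The conjugate-product formula needs ramification data in a Galois closure that you do not have.
\end{itemize}
Your displayed ``global formula'' and the expression ending in ``$\cdots$'' are placeholders, not computations. Note also that triviality of the different at finite places (Proposition \ref{prop:different_F_over_K_v}) is by itself compatible with $\Fqq[v][u]$ having nontrivial index in the integral closure. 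So some further input about $u$ is indispensable.

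\textbf{The missing idea is the Weierstrass semigroup at $P$, which is what the paper uses.} Since $P$ is the only place over the pole of $v$, the integral closure of $\Fqq[v]$ in $F$ is the ring of functions regular outside $P$ \cite[Theorem 3.2.6]{stichtenothAlgebraicFunctionFields2009}. The semigroup $H(P)=\langle 2q/3,q,q+1\rangle$ is generated by the pole orders of $u,v,w$. Hence any such function can be reduced, by subtracting scalar multiples of monomials $u^av^bw^c$ of the same pole order, and by induction this ring is $\Fqq[u,v,w]$. That ring equals $\Fqq[u,v]=\Fqq[v][u]$ because $w=u^3+\delta u^2-v^2$. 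Since $\varphi$ is monic of degree $q$ over $\Fqq[v]$, the module $\Fqq[v][u]$ is free with basis $1,u,\dots,u^{q-1}$. This gives the integral basis at every finite place at once.

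Observe that the pole orders $2qi/3$ of the powers $u^i$ fall into only three residue classes modulo $q$. So the cubic relation, which places $w$ (pole order $q+1$) inside $\Fqq[u,v]$, is essential, and your Step 2 never uses it.
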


\begin{proof}
    Since $\Fqq[v]=\bigcap_{Q \ne Q_\infty} \mathcal{O}_Q$, where $\mathcal{O}_Q$ denotes the local ring at $Q$ and $Q_\infty$ denotes the pole of $v$, $\Fqq[v]$ is a holomorphy ring of $\Fqq(v)$ (in the sense of \cite[Definition 3.2.2]{stichtenothAlgebraicFunctionFields2009}). Furthermore 
    $$
        H(P) = \langle -v_P(u),-v_P(v), -v_P(w)\rangle,
    $$ 
    which together with \cite[Theorem 3.2.6]{stichtenothAlgebraicFunctionFields2009} implies that the integral closure of $\Fqq[v]$ in $F$ is $\Fqq[u,v,w] = \Fqq[u,v]$. Now \cite[Proposition 3.3.1]{stichtenothAlgebraicFunctionFields2009} implies that the minimal polynomial $\varphi(T) \in \left(\Fqq(v)\right)[T]$ of $u$ is a polynomial over $\Fqq(v)$, i.e.,
    $$
        \varphi(T) \in \left(\Fqq[v]\right)[T],
    $$
    and $\deg \varphi = [F:\Fqq(v)] = q$. It follows that $1, u, u^2, \dots, u^{q-1}$ is an integral basis of $F/\Fqq(v)$ at any place different from the pole of $v$ (see \cite[Exercise 3.4]{stichtenothAlgebraicFunctionFields2009}).
\end{proof}

We finish the proof of the main theorem in three steps now. In the first step we determine the non-constant terms of the minimal polynomial of $u$:

\begin{proposition}\label{prop:shape_of_phi}
    The function field $F = \Fqq(u,v)$ can be defined by an equation of the form
    $$
        u^q + \gamma u + G(v) = 0,
    $$
    where $\gamma :=-\frac{\delta \eta^q}{\alpha} \in \Fqq^{\times}$ and $G(v)\in \Fqq[v]$.
\end{proposition}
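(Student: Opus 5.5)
We aim to produce the equation directly from the two relations in Corollary \ref{cor:cubic_hermitian_shape}, by eliminating $w$. Multiplying the Hermitian relation by $w^{q+1}$ (as in the proof of the Proposition showing $c\neq 0$) gives
$$
    \eta w^q + \eta^q w = -\left(\beta v^{q+1} + \alpha u^{q+1} + \nu v^q + \mu u^q + \nu^q v + \mu^q u + \rho\right).
$$
Substituting $w = u^3+\delta u^2 - v^2$ turns this into a polynomial relation $\Phi(u,v)=0$ with coefficients in $\Fqq$. The $u$-degree of $\Phi$ is $3q$, coming from $\eta u^{3q}$, so $\Phi$ is not yet the minimal polynomial of $u$, which has degree $q$ by Lemma \ref{lem:integralbasis}. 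The plan is therefore to use the $q$-th power structure to reduce.

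First we rewrite $\eta w^q = \eta(u^3+\delta u^2-v^2)^q = \eta(u^{3q}+\delta^q u^{2q}-v^{2q})$. The terms $\eta u^{3q}+\eta\delta^q u^{2q}$ cannot be cancelled by anything else in $\Phi$. So instead of working with $\Phi$, we exploit differentials, as in Proposition \ref{prop:different_F_over_K_v}.

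That proposition shows $F/\Fqq(v)$ is separable of degree $q$ and that only $P$ ramifies. Let $\varphi(T)=\sum_{i\le q} c_i(v)T^i\in\Fqq[v][T]$ be the monic minimal polynomial of $u$. Since $1,u,\dots,u^{q-1}$ is an integral basis away from the pole of $v$ (Lemma \ref{lem:integralbasis}), and no finite place ramifies, the Dedekind different theorem gives that $\varphi'(u)$ is a unit at every finite place of $F$. Hence $\varphi'(u)$ is a function with poles only at $P$ and no zeros, so $\varphi'(u)\in\Fqq^\times$. Writing $\varphi'(u)=\sum i c_i(v)u^{i-1}$, an element of $\Fqq[u,v]$ expressed in the basis $1,\dots,u^{q-1}$ over $\Fqq(v)$, linear independence forces $i c_i(v)=0$ for $2\le i\le q-1$ and $c_1(v)=\gamma\in\Fqq^\times$. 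Thus $\varphi = T^q + \sum_{3\mid i,\,0<i<q} c_i(v)T^i + \gamma T + G(v)$.

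The main obstacle is killing the coefficients $c_i$ with $3\mid i$, $0<i<q$. We plan to use pole orders at $P$. In $\varphi(u)=0$, the term $c_i(v)u^i$ has pole order $q\deg c_i + 2qi/3$. All such orders are distinct modulo $q$ and distinct from those of $u^q$ and $\gamma u$, except where they coincide; in any case the maximal one must be attained twice. The residues mod $q$ are $2qi/3 \bmod q$, which is $0$ or $2q/3\cdot(i \bmod 3)$-type, so collisions are possible. To rule them out, we compare with the explicit relation $\Phi(u,v)=0$: reducing $\Phi$ modulo $\varphi$, or equivalently viewing $\Phi(T,v)$ as divisible by $\varphi(T)$ in $\Fqq[v][T]$, should pin down the shape. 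If this is awkward, a cleaner route is to compute $\varphi$ directly. From the differential identity $(\alpha u^q - \eta^q\delta u+\mu^q)\,du = (\cdots)dv$ of Proposition \ref{prop:different_F_over_K_v}, we expect $\alpha u^q - \eta^q\delta u$ to play the role of $\varphi$ up to scaling. That is, we try to show $\alpha u^q - \eta^q\delta u + \mu^q u$-type expression lies in $\Fqq[v]$, yielding $\gamma=-\delta\eta^q/\alpha$. We would verify this by showing the element $\alpha(u^q+\gamma u)$ is annihilated by the derivation $d/du$ relative to $v$, i.e.\ that it is a function of $v$ alone in the separable extension. This final identification is where the real computation lies.
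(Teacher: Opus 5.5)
\textbf{There is a genuine gap: the proposal stops before the step that carries the content of the proposition.}

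Your first half is correct and matches the paper. The different computation (Proposition~\ref{prop:different_F_over_K_v}) and the integral basis (Lemma~\ref{lem:integralbasis}) make $\varphi'(u)$ a nonzero constant, so $\varphi(T)=T^q+\gamma T+\sum_{i=0}^{q/3-1}A_i(v)T^{3i}$. But you never show that $A_i=0$ for $i\ge 1$, nor that the constant coming out of the different argument equals $-\delta\eta^q/\alpha$. None of your sketched routes closes this:
\begin{itemize}
\item Valuations at $P$ cannot decide it. The terms $u^q$, $G(v)$ and every $A_i(v)u^{3i}$ all have pole order divisible by $q$, so nothing prevents cancellation among them.
\item The divisibility $\varphi\mid\Phi$ is only asserted to ``pin down the shape''; no argument is given.
\item The final suggestion cannot work as stated. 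Since $F/\Fqq(v)$ is separable, every derivation of $F$ vanishing on $\Fqq(v)$ is zero, so ``annihilated by $d/du$ relative to $v$'' carries no information. More fundamentally, no condition on $dh$ alone can force $h\in\Fqq(v)$, because $d(h+k^3)=dh$ for every $k\in F$.
\end{itemize}
The coefficients $A_i$ with $i\ge1$ are exactly what any derivative in $u$ fails to detect. That is why the different argument leaves them undetermined.

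The missing idea is to differentiate the coefficients with respect to $v$, and then use divisibility by $\varphi$ in $\Fqq[v][T]$ together with a derivative in $T$.
\begin{itemize}
\item Set $S(T):=\sum_i A_i'(v)T^{3i}$. Then $d\varphi(u)=0$ gives $\gamma\,du+S(u)\,dv=0$.
\item Differentiating the cubic and Hermitian relations and eliminating $dw$ gives $(\alpha u^q-\eta^q\delta u+\mu^q)\,du+(\beta v^q+\eta^q v+\nu^q)\,dv=0$.
\item Since $dv\neq0$, these combine to $(\alpha u^q-\eta^q\delta u+\mu^q)S(u)=\gamma(\beta v^q+\eta^q v+\nu^q)$. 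Hence $(\alpha T^q-\eta^q\delta T+\mu^q)S(T)-\gamma(\beta v^q+\eta^q v+\nu^q)=\varphi(T)\psi(T)$ for some $\psi\in\Fqq[v][T]$ of degree at most $q-3$.
\item Differentiate in $T$, using $S'(T)=0$ and $\varphi'=\gamma$. This gives $-\eta^q\delta S(T)-\gamma\psi(T)=\varphi(T)\psi'(T)$. The left side has degree $<q$, so both sides vanish and $\psi=-\frac{\eta^q\delta}{\gamma}S$.
\item Substituting back yields $S(T)\bigl(\alpha T^q-\eta^q\delta T+\mu^q+\frac{\eta^q\delta}{\gamma}\varphi(T)\bigr)=\gamma(\beta v^q+\eta^q v+\nu^q)\neq0$. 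So both factors have $T$-degree $0$, which forces $\alpha+\delta\eta^q/\gamma=0$ and $A_i=0$ for $i\ge1$.
\end{itemize}
Without this argument, or an equivalent one, your proof is incomplete exactly where you say ``the real computation lies''.
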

\begin{proof}
    Let $\varphi(T) \in \left(\Fqq[v]\right)[T]$ be the minimal polynomial of $u$ over $\Fqq(v)$. For any place $Q_0$ of $\Fqq(v)$ different from the pole of $v$, and any $Q$ lying above $Q_0$ in $F$ we have 
    $$
        v_{Q}(\varphi'(u)) = d(Q \mid Q_0) = 0
    $$
    by Proposition \ref{prop:different_F_over_K_v}, Lemma \ref{lem:integralbasis} and \cite[Theorem 3.5.10 (b)]{stichtenothAlgebraicFunctionFields2009}. It follows that $\varphi'(u)$ is a constant, i.e., 
    $$
        \varphi'(u) = \gamma \in \Fqq^{\times}.
    $$
    Since $\deg(\varphi'(T)-\gamma) <q$ this implies that $\varphi'(T) = \gamma$, and $\varphi$ is monic, so we must have 
    $$
        \varphi(T) = T^q + \gamma \cdot T + \sum_{i=0}^{q/3-1} A_{i} \cdot  T^{3i},
    $$
    where each $A_i$ is a polynomial in $v$, i.e., $A_i \in \Fqq[v]$. For convenience, we define the polynomial
    $$
        S(T) := \sum_{i=0}^{q/3-1} A_i' \cdot T^{3i} \in \left(\Fqq[v]\right)[T],
    $$
    where $A_i'$ denotes the derivative of $A_i$ with respect to $v$.
    
    From $d\varphi(u) = 0$ we then get
    $$
        \gamma du +S(u) dv = 0.
    $$
    On the other hand, after differentiating the cubic and the Hermitian equation, as in the proof of Proposition \ref{prop:different_F_over_K_v}, and eliminating $dw$ we get
    $$
        (\alpha u^q - \eta^q \delta u + \mu^q) du + (\beta v^q + \eta^q v +\nu^q)dv = 0.
    $$
    Combining the two equations yields
    $$
        (\alpha u^q - \eta^q \delta u + \mu^q) S(u) dv = \gamma (\beta v^q + \eta^q v +\nu^q)dv
    $$

    Since $dv \neq 0$ this means that
    \begin{equation}\label{eq:differentiated_hermitian}
        (\alpha u^q - \eta^q \delta u + \mu^q) S(u) - \gamma (\beta v^q + \eta^q v +\nu^q) = 0.
    \end{equation}
    When viewed as a polynomial in $u$ the left hand side must be divisible by the minimal polynomial of $u$, i.e. there exists some polynomial $\psi(T) \in \left(\Fqq[v]\right)[T]$ such that
    \begin{equation}\label{eq:diff_Su} 
        (\alpha T^q - \eta^q \delta T + \mu^q) S(T) - \gamma (\beta v^q + \eta^q v +\nu^q)  = \varphi(T)\cdot\psi(T).
    \end{equation}
    Note that the degree of $\psi(T)$ is at most $q-3$. Differentiating with respect to $T$, using $\varphi'(T) = \gamma$ and rearranging yields
    $$
        -\eta^q \delta \cdot S(T) -\gamma\cdot\psi(T) =  \varphi(T)\cdot \psi'(T).
    $$
    Since the right hand side is divisible by $\varphi(T)$ and has degree strictly less than $q$ it must be zero, i.e.,
    $$
        \psi(T) = - \frac{\eta^q\delta}{\gamma} S(T).
    $$
    By inserting this in Equation \eqref{eq:diff_Su} and rearranging we get
    $$
        S(T) \cdot \left(\alpha T^q - \eta^q \delta T + \mu^q + \frac{\eta^q\delta}{\gamma}\varphi(T) \right) = \gamma (\beta v^q + \eta^q v +\nu^q) \in \Fqq[v],
    $$
    which means that both factors on the left must have $T$-degree zero. Since $\varphi(T) = T^q + \gamma \cdot T + \sum_{i=0}^{q/3-1} A_{i} \cdot  T^{3i}$ we conclude that
    $$
        \alpha + \frac{\delta \eta^q}{\gamma} = 0 \quad \text{and} \quad \sum_{i=0}^{q/3-1}A_iT^{3i} = A_0.   
    $$
    In particular, with $G(v) := A_0 \in \Fqq[v]$ we get 
    $$
        u^q +\gamma u + G(v) = 0
    $$
    and $\gamma = -\frac{\delta \eta^q}{\alpha}$ as claimed.
\end{proof}


The second step is to study $G(v)$ in more detail.

\begin{lemma}\label{lemma:G_v}
    The polynomial $G(v)$ has degree $2q/3$, the leading coefficient is $-1$, and it holds that 
    $$
        G'(v)\left(\mu^q - \alpha G(v)\right) = \gamma(\beta v^q + \eta^q v + \nu^q).
    $$
    Moreover, the polynomial $G'(v)$ has degree $q/3$, the leading coefficient is $\ell := \gamma\beta/\alpha$ and 
    $$
        G(v) = -\left(\frac{G'(v)}{\ell}\right)^2 + \theta,
    $$
    where $\theta := \frac{\mu^q}{\alpha} - \frac{\gamma \eta^q}{\alpha \ell^2} = \frac{\mu^q}{\alpha} + \frac{\alpha^2}{\delta\beta^2} \in \Fqq$.
\end{lemma}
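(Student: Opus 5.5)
The plan is to extract everything from the identity that was already obtained in the proof of Proposition \ref{prop:shape_of_phi}, together with the equation $u^q+\gamma u+G(v)=0$ and the cubic relation of Corollary \ref{cor:cubic_hermitian_shape}.

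\emph{Step 1: the differential identity.} In the proof of Proposition \ref{prop:shape_of_phi} we found that only $A_0=G(v)$ survives. Hence $S(T)=G'(v)$ is constant in $T$, and Equation \eqref{eq:differentiated_hermitian} reads
$$G'(v)\left(\alpha u^q-\eta^q\delta u+\mu^q\right)=\gamma(\beta v^q+\eta^q v+\nu^q).$$
Substitute $u^q=-\gamma u-G(v)$ and use $\alpha\gamma=-\delta\eta^q$. The $u$-terms then cancel, giving $\alpha u^q-\eta^q\delta u+\mu^q=\mu^q-\alpha G(v)$. This is exactly the claimed identity.

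\emph{Step 2: degree and leading coefficients.} Comparing pole orders at $P$ in $u^q+\gamma u+G(v)=0$ gives $q\deg G=v_P(u^q)\cdot(-1)=2q^2/3$, so $\deg G=2q/3$. For the leading coefficient, use $w=u^3+\delta u^2-v^2$. Since $-v_P(w)=q+1<2q$, the function $u^3/v^2$ takes the value $1$ at $P$. Raising to the power $q/3$ shows that $u^q/v^{2q/3}$ also takes the value $1$ at $P$. Since $\gamma u$ has smaller pole order, the leading coefficient of $G$ is $-1$. In the identity of Step 1 the right-hand side has degree $q$, and $\mu^q-\alpha G$ has degree $2q/3$ with leading coefficient $\alpha$. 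Therefore $\deg G'=q/3$, and its leading coefficient $\ell$ satisfies $\ell\alpha=\gamma\beta$.

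\emph{Step 3: the square relation (main step).} Put $K:=\mu^q-\alpha G$, so that $K'=-\alpha G'$. Differentiating the identity $G'K=\gamma(\beta v^q+\eta^qv+\nu^q)$ gives
$$G''K-\alpha G'^2=\gamma\eta^q,$$
because $(v^q)'=0$. The key point is to show that $G''$ is a constant. We have $\deg G'^2=2q/3=\deg K$. If $G''$ had positive degree $k$, then $G''K$ would have degree $k+2q/3$. No other term could cancel it, which is a contradiction. So $G''=c\in\Fqq$. Comparing the coefficients of $v^{2q/3}$ gives $c\alpha-\alpha\ell^2=0$, i.e.\ $c=\ell^2$. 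The equation thus becomes $\alpha G'^2=\ell^2K-\gamma\eta^q$. Dividing by $\alpha\ell^2$ yields
$$\left(\frac{G'}{\ell}\right)^2=\frac{\mu^q}{\alpha}-G-\frac{\gamma\eta^q}{\alpha\ell^2},$$
which is the claimed formula with $\theta=\frac{\mu^q}{\alpha}-\frac{\gamma\eta^q}{\alpha\ell^2}$.

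For the second expression of $\theta$, substitute $\ell=\gamma\beta/\alpha$ and $\gamma=-\delta\eta^q/\alpha$. This gives $\frac{\gamma\eta^q}{\alpha\ell^2}=\frac{\alpha\eta^q}{\gamma\beta^2}=-\frac{\alpha^2}{\delta\beta^2}$. The only delicate part is the degree argument forcing $G''$ to be constant; everything else is bookkeeping.
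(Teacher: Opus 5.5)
Your proposal is correct and follows the paper's proof essentially step for step: pole orders give $\deg G$, the value of $u^3/v^2$ at $P$ gives the leading coefficient, Equation \eqref{eq:differentiated_hermitian} is rewritten using $u^q=-\gamma u-G(v)$, and then you differentiate and compare degrees. You also spell out the degree argument forcing $G''=\ell^2$, which the paper only asserts, and the one small omission is in Step 2, where you should note that $\delta u^2/v^2$ vanishes at $P$ as well as $w/v^2$ (its valuation is $2q-4q/3=2q/3>0$).
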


\begin{proof}
    To see that $\deg G = 2q/3$ we consider the valuation at $P$ of $u^q + \gamma u  = - G(v)$: the left hand side has valuation $-2q^2/3$ and the right hand side has valuation $-q\cdot \deg(G(v))$. The leading coefficient of $G(v)$ can be determined in a similarly simple manner: From the cubic equation we get
    $$
        \frac{u^3}{v^2} = \frac{w-\delta u^2}{v^2} + 1,
    $$
    and
    $$
        v_P\left(\frac{w-\delta u^2}{v^2}\right) = 2q - \frac{4q}{3} = \frac{2q}{3}>0, 
    $$
    so 
    $$
        \left(\frac{u^3}{v^2}\right)(P) = 1.
    $$
    After dividing the relation $u^q + \gamma u + G(v) =0$ by $v^{2q/3}$ and evaluating the resulting functions at $P$ we get that
    $$
        \left(\frac{G(v)}{v^{2q/3}}\right)(P) = - \left(\frac{u^q}{v^{2q/3}} + \frac{\gamma u}{v^{2q/3}}\right)(P) = - \left( \left(\frac{u^3}{v^2}\right)(P)\right)^{q/3}= -1,
    $$
    since $v_P(\gamma u/v^{2q/3}) >0$. From this we conclude that the leading coefficient of $G(v)$ is indeed $-1$.

    The claimed identity follows almost directly from the proof of the previous proposition: Keeping the same notation as in the proof we have $S(T) = G'(v)$ and $\alpha u^q - \eta^q\delta u = -\alpha G(v)$ so Equation \eqref{eq:differentiated_hermitian} can be rewritten as
    $$
        G'(v)\left(\mu^q - \alpha G(v)\right) = \gamma(\beta v^q + \eta^q v + \nu^q).
    $$
    It follows immediately that $\deg(G'(v))=q/3$, since $\deg G = 2q/3$, and the leading coefficient can be read off directly; it is $\ell := \gamma\beta/\alpha$ as claimed. Moreover, after differentiating with respect to $v$ and rearranging we get
    $$
        G''(v)(\mu^q-\alpha G(v)) = \alpha (G'(v))^2 + \gamma \eta^q,
    $$
    from which $G''(v) = \ell^2 \in \Fqq$ follows. Solving for $G(v)$ gives  
    $$
        G(v) = -\left(\frac{G'(v)}{\ell}\right)^2 + \theta,
    $$
    where $\theta := \frac{\mu^q}{\alpha} - \frac{\gamma \eta^q}{\alpha \ell^2} \in \Fqq$, as claimed.
\end{proof}

Finally, we finish the proof of the main theorem by exhibiting a change of variables that turns $u^q + \gamma u + G(v)=0$ into the equation for the known third largest genus function field. \newline

\begin{proof}[Proof of Theorem~\ref{thm:main}]
    Recall from Corollary \ref{cor:cubic_hermitian_shape} that the Hermitian surface relation reads
    $$
        \rho + \alpha u^{q+1} + \beta v^{q+1} + \mu u^q + \mu^q u + \nu v^q + \nu^qv + \eta w^q + \eta^qw = 0. 
    $$
    By (repeatedly) substituting 
    $$
        w = u^3 + \delta u^2 -v^2 \quad \text{and} \quad u^q = -\gamma u - G(v)
    $$
    we can obtain a polynomial identity in $u$ and $v$ where no term has $u$-degree above $3$. In fact, we get 
    \begin{equation}\label{eq:hermitian_cubic_in_u}
    \begin{aligned}
        & (-\eta \gamma^3 + \eta^q) u^3 + (-\alpha\gamma + \eta \delta^q \gamma^2 + \eta^q\delta) u^2 + \left(\mu^q - \mu \gamma - G(v)(\alpha+\eta \delta^q \gamma)\right)u  \\
        & + \left( -\eta v^{2q} +\beta v^{q+1} + \nu v^q - \eta^q v^2 + \nu^q v - \eta G(v)^3+ \eta \delta^q G(v)^2 - \mu G(v) + \rho \right) = 0
    \end{aligned}
    \end{equation}
    Since $[F:\Fqq(v)] = q > 3$ the left hand side must be zero also when considered as a polynomial in $u$. From considering the coefficient in front of $u^3$ we get
    $$
        \gamma^3 = \eta^{q-1}.
    $$
    Since $G(v)$ is a non-constant polynomial in $v$ the coefficient in front of $u$ tells us that
    $$
        \mu^q = \mu \gamma \quad \text{and} \quad \alpha = -\eta \delta^q\gamma.
    $$
    Combining the above with $\gamma =-\frac{\delta \eta^q}{\alpha}$ from Proposition \ref{prop:shape_of_phi} we obtain
    $$
\gamma^2=\frac{\eta^{q-1}}{\delta^{q-1}} \quad \text{and hence}  \quad      \gamma = \frac{\gamma^3}{\gamma^2}=\delta^{q-1}. 
    $$
This implies
    $$
        \frac{\mu}{\delta} = \frac{\mu^q}{\delta^q}.
    $$
    so that $\frac{\mu}{\delta} \in \Fq$. The last fact will be useful later, but for now we note that $\gamma = \delta^{q-1}$ implies
    $$
        \left(\frac{u}{\delta}\right)^q + \left(\frac{u}{\delta}\right) = \frac{u^q + \gamma u}{\delta^q} = \frac{-G(v)}{\delta^q},
    $$
    which already brings us closer to the desired form. Inserting the expression for $G(v)$ from the previous lemma we get
    $$
        \left(\frac{u}{\delta}\right)^q + \left(\frac{u}{\delta}\right) = \frac{1}{\delta^q}\cdot\left(\frac{G'(v)}{\ell}\right)^2 - \frac{\theta}{\delta^q}.
    $$
    The constant $\theta/\delta^q$ is in $\Fq$ and can be absorbed in the left hand side. Indeed, 
    \begin{align*}
        \frac{\theta}{\delta^q} = \frac{1}{\delta^q} \left( \frac{\mu^q}{\alpha} + \frac{\alpha^2}{\delta\beta^2}\right) = \frac{1}{\alpha}\cdot\left(\frac{\mu}{\delta}\right)^q + \frac{1}{\delta^{q+1}}\cdot \frac{\alpha^2}{\beta^2} \in \Fq,
    \end{align*}
    so 
    $$
        u_1 := \frac{u}{\delta} - \frac{\theta}{\delta^q}
    $$
    satisfies
    $$
        u_1^q + u_1= \left(\frac{u}{\delta}\right)^q + \left(\frac{u}{\delta}\right) - 2\frac{\theta}{\delta^q} =  \frac{1}{\delta^q}\cdot\left(\frac{G'(v)}{\ell}\right)^2.
    $$

    Next, we show that $\delta$, and hence $1/\delta^q$, is a square in $\Fqq$ . To do so, recall that the constant term in Equation \eqref{eq:hermitian_cubic_in_u} must be zero, i.e.,
    $$
        -\eta v^{2q} +\beta v^{q+1} + \nu v^q - \eta^q v^2 + \nu^q v - \eta G(v)^3+ \eta \delta^q G(v)^2 - \mu G(v) + \rho = 0.
    $$
    Note that this also holds as an equality of polynomials in $\Fqq[v]$. The same is true for the identities from Lemma \ref{lemma:G_v}, and working modulo $G'(v)$ in $\Fqq[v]$ these imply
    $$
        v^q \equiv -\frac{\eta^qv + \nu^q}{\beta} \pmod{G'(v)} \quad \text{and} \quad G(v) \equiv \theta \pmod{G'(v)},
    $$
    which when combined with the above yields
    $$
        \left( \eta^q - \frac{\eta^{2q+1}}{\beta^2}\right) v^2 + \left( \frac{\eta^{q+1}\nu^q}{\beta^2} - \frac{\nu\eta^q}{\beta}\right)v + \tau \equiv 0 \pmod{G'(v)},
    $$
    for some $\tau \in \Fqq$. Since $\deg(G'(v)) = q/3 \geq 3$ the left hand side must be zero, which implies 
    \begin{equation}\label{eq:beta_eta_nu}
        \beta^2 = \eta^{q+1} \quad \text{and} \quad \eta\nu^q = \beta\nu.
    \end{equation}
    From earlier, we have $\alpha = -\eta \delta^q\gamma = -\eta \delta^q\delta^{q-1}$, and raising to the $(q+1)$'st power gives 
    $$
        \alpha^2 = \alpha^{q+1} = \eta^{q+1}\delta^{q^2+q}= \beta^2\delta^{q+1}, 
    $$
    using $\beta^2 = \eta^{q+1}$ as we just saw. This means that
    $$
        \delta^{\frac{q^2-1}{2}} = \left(\frac{\alpha}{\beta}\right)^{q-1} = 1,
    $$
    so $\delta$ is a square. With $\lambda\in \Fqq$ chosen such that $\lambda^2 = \delta^q$ we then have
    $$
        u_1^q + u_1 = \left( \frac{G'(v)}{\lambda \ell}\right)^2.
    $$

    Our goal now is to find an affine $\Fqq$-transformation of $v$, say $v_1 = av+b$, $a,b\in \Fqq$, such that 
    $$
        \frac{G'(v)}{\lambda \ell} = v_1 + v_1^3 + \dots + v_1^{\frac{q}{3}}.
    $$
    Since $G'' = \ell^2$ (see the proof of the previous lemma), this would imply 
    $$
        \frac{\ell}{\lambda} dv = \frac{G''}{\ell\lambda}dv = d\left(\frac{G'}{\ell \lambda}\right) = dv_1,
    $$
    so we take $a := \ell/\lambda$. To find a good candidate for $b$, note that the above would imply
    \begin{equation}\label{eq:finding_b}
        G'\left(\frac{-b}{a}\right) = G'\left(\frac{-b\lambda}{\ell}\right)= 0,
    \end{equation}
    since the right hand side of the equation we want is $0$ at $v_1=0$. We claim that there exists such $b$ in $\Fqq$: Let $c_0$ be a root of the polynomial $G'(v)$ in some algebraic closure of $\Fqq$. Then, Lemma \ref{lemma:G_v} implies
    $$
        \beta c_0^q + \eta^q c_0 + \nu^q = 0.
    $$
    Using this, we find
    $$
        c_0^{q^2} = \left(\frac{-\eta^q c_0 - \nu^q}{\beta}\right)^q = -\frac{\eta}{\beta}\left( \frac{-\eta^q c_0 - \nu^q}{\beta}\right) - \frac{\nu}{\beta} = \frac{\eta^{q+1}}{\beta^2}c_0 + \frac{\eta \nu^q- \beta\nu}{\beta^2} = c_0,
    $$
    where the last equality follows from Equation \eqref{eq:beta_eta_nu}. We conclude that $c_0 \in \Fqq$, so we choose $b := -\ell c_0/\lambda$. 

    Finally, we need to check that our choices for $a$ and $b$ actually give the relation we want, i.e., that $v_1 = \frac{\ell}{\lambda}(v-c_0)$, with $c_0\in \Fqq$ a root of $G'$ and $\lambda \in \Fqq$ such that $\lambda^2 = \delta^q$, satisfies
    $$
        \frac{G'(v)}{\lambda \ell} = v_1 + v_1^3 + \dots + v_1^{\frac{q}{3}}.
    $$
    We claim that
    \begin{equation}\label{eq:congruence_mod_v_1q}
        \left( \frac{G'(v)}{\lambda \ell}\right)^3 - \frac{G'(v)}{\lambda \ell} \equiv -v_1 \pmod{v_1^q},
    \end{equation}
    when viewing $G'(v)$ as a polynomial in $v_1$. To see this, we return to the two equations from Lemma \ref{lemma:G_v}. Combining them yields
    $$
        G'(v) \left(\mu^q + \frac{\alpha}{\ell^2}(G'(v))^2 - \alpha \theta \right) = \gamma (\beta v^q + \eta^q v + \nu^q),
    $$
    which can be rewritten as 
    $$
        \frac{\alpha}{\ell^2}(G'(v))^3 + (\mu^q - \alpha \theta)G'(v) = \gamma(\beta v^q + \eta^q v + \nu^q).
    $$
    We saw previously that $\alpha^2 = \beta^2\delta^{q+1}$, and from this it follows that 
    $$
        \theta = \frac{\mu^q}{\alpha} + \frac{\alpha^2}{\delta\beta^2} = \frac{\mu^q}{\alpha}+\delta^q.
    $$
    Using this and $\gamma\beta = \alpha \ell$, we can rewrite the equation above as
    $$
        \left(\frac{G'(v)}{\ell}\right)^3 - \frac{\delta^q}{\ell}G'(v) = v^q + \frac{\eta^q}{\beta}v + \frac{\nu^q}{\beta},
    $$
    which after dividing by $\lambda^3$ and using $\lambda^2 = \delta^q$ yields
    $$
        \left(\frac{G'(v)}{\lambda\ell}\right)^3 - \frac{G'(v)}{\lambda\ell} = \frac{1}{\lambda^3}\left( v^q + \frac{\eta^q}{\beta}v + \frac{\nu^q}{\beta} \right).
    $$
    Now, substituting $v = \frac{\lambda}{\ell}v_1 + c_0$ on the right, and working modulo $v_1^q$ gives
    $$
        \left(\frac{G'(v)}{\lambda\ell}\right)^3 - \frac{G'(v)}{\lambda\ell} \equiv \frac{1}{\lambda^3}\left( c_0^q + \frac{\eta^q}{\beta}\left(\frac{\lambda}{\ell}v_1 + c_0\right) + \frac{\nu^q}{\beta} \right) \pmod{v_1^q} 
    $$
    and the right hand side is simply $-v_1$ since $\beta c_0^q + \eta^q c_0 + \nu^q = 0$ per definition, and 
    $$
        \frac{1}{\lambda^3}\cdot \frac{\eta^q\lambda}{\beta\ell} = \frac{\eta^q \alpha}{\gamma\beta^2\delta^q} = \frac{-\eta^{q+1}}{\beta^2} = -1,
    $$
    using first $\ell = \gamma\beta/\alpha$, then $\alpha = - \eta \delta^q\gamma$ and finally $\beta^2 = \eta^{q+1}$. 
    
    We conclude that (\ref{eq:congruence_mod_v_1q}) holds, and using it repeatedly implies
    $$
        \frac{G'(v)}{\lambda \ell} \equiv v_1 + v_1^3 + \dots + v_1^{\frac{q}{3}} + \left( \frac{G'(v)}{\lambda \ell}\right)^q\pmod{v_1^q},
    $$
    as polynomials in $v_1$. Moreover, the polynomial $G'(v)$ has no constant term when viewed as a polynomial in $v_1$, since $v \equiv c_0 \pmod{v_1}$ and $c_0$ is a root of $G'$. This means that $(G'(v))^q$ is divisible by $v_1^q$, so we have
    $$
        \frac{G'(v)}{\lambda \ell} \equiv v_1 + v_1^3 + \dots + v_1^{\frac{q}{3}} \pmod{v_1^q}.
    $$
    The $v_1$-degree of both the left hand side and the right hand side is strictly less than $q$, so the above is an equality, i.e.
    $$
        \frac{G'(v)}{\lambda \ell} = v_1 + v_1^3 + \dots + v_1^{\frac{q}{3}},
    $$
    which was what we wanted to show. 

    Writing $y=u_1$ and $x=v_1$, we conclude that 
    $$
        y^q + y = \left(x + x^3 + \dots + x^{\frac{q}{3}}\right)^2,
    $$
    and $F = \Fqq(u,v) = \Fqq(u_1,v_1)=\Fqq(x,y)$, so this finishes the proof of the main theorem.
\end{proof}

\section{Acknowledgments and AI tool disclosure}
This work was supported by a research grant (VIL''52303'') from Villum Fonden. This work was supported in part by a grant of access to OpenAI models through the ChatGPT for Academic Researchers program.

GPT 6 Astra was used to obtain a first version of the proof of the main result. Afterwards the authors used GPT 6 Astra and GPT 5.6 Sol to improve and simplify different parts of the proofs.
We thoroughly checked the correctness of all results in this article. 
All text, including the wording of the proofs, was written by the authors.

\bibliographystyle{abbrv}
\bibliography{ref}

\end{document}